\newtheorem{defn}{Definition}[section]
\newtheorem{thm}{Theorem}[section]
\newtheorem{prop}{Proposition}[section]
\newtheorem{lem}{Lemma}[section]
\newtheorem{cor}{Corollary}[section]
\newtheorem{rem}{Remark}[section]
\DeclareMathOperator*{\argmin}{argmin}
\newcommand{\mF}{\mathcal{F}}
\newcommand{\mP}{{\mathcal{P}}}
\newcommand{\mptr}{{\mathcal{P}_2(\R)}}
\newcommand{\mptra}{{\mathcal{P}_2^a(\R)}}
\newcommand{\mW}{\mathcal{W}}
\newcommand{\R}{\mathbb{R}}
\newcommand{\RN}{{\mathbb{R}^{N}}}
\newcommand{\supp}{\mathrm{supp}}
\newcommand{\sign}{\mathrm{sign}}
\def\avint{\mathop{\,\,\rlap{\bf{--}}\!\!\int}\nolimits}
\def\XXint#1#2#3{{\setbox0=\hbox{$#1{#2#3}{\int}$}
  \vcenter{\hbox{$#2#3$}}\kern-.5\wd0}}
\title{Many-particle  limit for a system of interaction equations driven by Newtonian potentials}
\date{}
\begin{document}
\author{M. Di Francesco \and A. Esposito \and M. Schmidtchen}
\address{M. Di Francesco - DISIM - Department of Information Engineering, Computer Science and Mathematics, University of L'Aquila, Via Vetoio 1 (Coppito) 67100 L'Aquila (AQ) - Italy}
\address{A. Esposito -- 
Department Mathematik, Friedrich-Alexander-Universit\"at Erlangen-N\"urnberg, Cauerstrasse 11,
91058 Erlangen, Germany.}
\address{M. Schmidtchen --  Sorbonne Universit\'e, Universit\'e de Paris, Laboratoire Jacques-Louis Lions, F-75005, Paris, France.}
\email{marco.difrancesco@univaq.it}
\email{antonio.esposito@fau.de}
\email{schmidtchen@ljll.maths.upmc.fr}

\begin{abstract}
    We consider a discrete particle system of two species coupled through nonlocal interactions driven by the one-dimensional Newtonian potential, with repulsive self-interaction and attractive cross-interaction. After providing a suitable existence theory in a finite-dimensional framework, we explore the behaviour of the particle system in case of collisions and analyse the behaviour of the solutions with initial data featuring particle clusters. Subsequently, we prove that the empirical measure associated to the particle system converges to the unique 2-Wasserstein gradient flow solution of a system of two partial differential equations (PDEs) with nonlocal interaction terms in a proper measure sense. The latter result uses uniform estimates of the $L^m$-norms of a piecewise constant reconstruction of the density using the particle trajectories.
\end{abstract}

\keywords{systems with Newtonian interactions, deterministic particle limit, mean-field limit, gradient flows}
\subjclass[2010]{Primary: 35A24, 35F55. Secondary: 82C22; 35A35; 35Q70; 35R09}







\maketitle

\section{Introduction}
The general problem of approximating transport PDEs by the empirical measure associated to moving particles is quite classical in many contexts such as particle physics and gravitation. We refer to cornerstone papers such as \cite{morrey1955,onsager1944,dobrushin,sznitman} and to the review paper \cite{golse}. A prototype model and variation of the pure transport PDE  which gained great attention in the last decades is the nonlocal transport-diffusion equation
\begin{equation}\label{example}
    \partial_t \rho = \mathrm{div}(\nabla a(\rho) + \rho \nabla G\ast \rho),
\end{equation}
where $a=a(\rho)$ is a nonlinear diffusion function and $G$ is a space dependent kernel modelling nonlocal interaction. In the aforementioned contexts in particle physics and gravitation, $G$ is typically a singular kernel, which makes the analysis of \eqref{example} quite challenging. A similar situation occurs in the study of Keller-Segel model for chemotaxis, more precisely in its parabolic-elliptic version, see e.g. \cite{jager,biler,bertozzi,blanchet}. A different situation arises e.g. in \cite{BCP97,CMcCV03,toscani_granular} in the analysis of mean-field models for granular media, in which $G$ is typically a power law of the form $G(x)=|x|^\alpha$ with $\alpha>1$.

In the context of modern applications and real-world problems, equations of the form \eqref{example} naturally arise in the description of aggregation phenomena in population dynamics, see
\cite{boi,MEK99,okubo,topaz}. In these works the nonlocal terms are coupled with a linear or nonlinear diffusion arising from stochastic noise, see \cite{masgallic,oelschlaeger}. Clearly, the classical results in \cite{StroockVaradhan79,GuoPapaniVar88} are also relevant in this context although less related from the methodological point of view.

\bigskip
Starting from the early 2000 years, the theory of gradient flows in Wasserstein spaces developed in \cite{otto,JKO98,AGS} became an important tool to provide well-posedness results for the class of models \eqref{example}. \cite{otto} first provided the seminal ideas leading to the formulation of the porous medium equation as a gradient flow in the Wasserstein sense, whereas \cite{JKO98} adapted the 
``minimising movement'' idea by De Giorgi to the new metric framework. The case with nonlocal interactions was first studied in \cite{CMcCV03}, which partly anticipated the results in \cite{AGS} without providing the full metric framework but adapting the theory to the case of \eqref{example}, whereas \cite{AGS} addresses a more general theory of gradient flows in metric spaces. The result in \cite{CDFFLS} is also relevant in this context in that it allowed to extend the theory to kernels $G$ displaying a discontinuity of the gradient at the origin in the re-solution of the JKO-scheme \cite{JKO98} and in the proof of $\lambda$-convexity \cite{McC97} of the related functional. Moreover, \cite{CDFFLS} also provides a finite-time blow-up result for solutions with data in the space of probability measures. 

The role of $\lambda$-convexity of the functional $\mathcal{F}:\mptr\rightarrow \R$ defined by
\[\mu\mapsto \mathcal{F}[\mu]=\int_\R G\ast \mu\, d\mu\]
(here $\mptr$ denotes the space of probability measures with finite second moment) is essential in order to prove a stability result for two solution curves $\mu(t), \nu(t)$ (leading also to uniqueness of measure solutions) of the form
\[W_2(\mu(t),\nu(t))\leq e^{-\lambda t}W_2(\mu(0),\nu(0)),\]
which often implies as a byproduct  a \emph{many-particle approximation} result for the target equation \eqref{example}. This is true both in the diffusion-free case and in the case with diffusion, see the recent \cite{carrillo_delgadino_pavliotis}. 

The situation is more complicated in cases in which the functional lacks the $\lambda$-convexity, which is typically the case when $G$ has a singularity at the origin.
Attractive singularities make the study of well-posedness quite challenging in $L^p$ spaces. In this context, the result in \cite{bertozzi} allows to prove existence and uniqueness up to the blow-up time or globally when the singularity is not too strong. The repulsive case is also challenging especially if one wants to prove a many-particle approximation result, because a strong repulsion at the origin for $G$ forces point particles to resolve into absolutely continuous measures. A quite thorough study of the many-particle approximation in the absence of diffusion and with 
``almost Newtonian'' singular kernels (both attractive and repulsive) was provided in \cite{CCH14}, based on a technique developed in \cite{HaurayJabin07}.  In the case of $a=0$, the result in \cite{BCDFP} provides, so far, the only many-particle approximation result for \eqref{example} with $G$ being the Newtonian potential, albeit in one space dimension (i.e. $G(x)=\pm |x|$). Such a result also explores the connection of \eqref{example} with a scalar conservation law satisfied by the cumulative distribution variable $\int^x \rho(y,t) dy$.

The diffusion-free case $a=0$ often allows for a significant ``reduction of complexity'' of the PDE under consideration in that it often permits to approximate it by a set of \emph{deterministic} particles, i.e. not subject to stochastic noise and simply obeying a system of ordinary differential equations. Obvious advantages of that are the possibility or approximating the density under consideration by a discrete set of Lagrangian trajectories (a feature of great impact in some applications such as traffic flow or pedestrian movements) and the availability of a new 
numerical ``particle'' method for the target PDE. In fact, recent contributions to the literature try to provide deterministic approximations to transport PDEs in the case with diffusion as well, see the classical \cite{Russo90} for one dimensional linear diffusion, the result in \cite{GosseToscani06} for one-dimensional nonlinear diffusion, the results in \cite{patacchini} for multidimensional diffusion.

\bigskip
Recently, the specialised literature displayed an increasing interest of \emph{systems} of gradient flows, i.e. systems of more than one transport equations of the form \eqref{example}, modelling the mutual interplay of more than one species of individuals. The case with diffusion has a very rich literature in that it is quite challenging at the level of well-posedness due to the possibility of cross-diffusion effects. We refer for instance to the recent \cite{DiFranEspFag}, which provides a general existence result of two-species gradient flows of functionals with cross-diffusion and nonlocal interactions terms. 

As in the one-species case, the well-posedness and the stability in a Wasserstein gradient flow sense are strictly related with the convergence of a deterministic particle approximation scheme. We mention in this context the result in \cite{DFF} which allows to prove singular behavior such as a total collapse of particles and cluster formations via stability in the Wasserstein gradient flow sense of \cite{AGS}. The general (diffusion-free) system considered in \cite{DFF} reads
\begin{equation}\label{eq:pde-sys}
\begin{cases}
\partial_t\rho=\partial_x(\rho H_1'*\rho)+\partial_x(\rho K_1'*\eta),\\
\partial_t\eta=\partial_x(\eta H_2'*\eta)+\partial_x(\eta K_2'*\rho),
\end{cases}
\end{equation}
where the given potentials $H_1, H_2, K_1, K_2$ are smooth enough and convex up to a quadratic perturbation.

The recent result in \cite{CDFEFS} extended the existence and uniqueness proven in \cite{DFF} to the one-dimensional Newtonian case 
\begin{equation}\label{eq:newtonian_potentials}
    H_1(x)=H_2(x)=-|x|\, , \qquad K_1(x)=K_2(x)=|x|,
\end{equation}
corresponding to a set of particles of two species, with mutual repulsion within the same species (self-repulsion, or intra-specific repulsion) and attraction between particles of opposite species (cross-attraction, or inter-specific attraction), the driving interaction kernels being multiples of the Newtonian potential. The result of \cite{CDFEFS} holds in one space dimension. In particular, in case of absolutely continuous initial data $\rho_0, \eta_0$ \cite{CDFEFS} proves  global-in-time existence and uniqueness of solutions by posing system \eqref{eq:pde-sys}-\eqref{eq:newtonian_potentials} as gradient flow of the \textit{interaction energy functional}
\begin{equation}\label{eq:int_en_functional}
\mF(\rho,\eta)=-\frac{1}{2}\int_\R N*\rho\,d\rho - \frac{1}{2}\int_\R N*\eta\,d\eta + \int_\R N*\eta\,d\rho,
\end{equation}
where
\[N(x):=|x|, \qquad x\in\R.\] 
When dealing with general measures as initial data, in particular Dirac deltas, the sub-differential of $\mF$ may be empty (see \cite{BCDFP}). Hence, in \cite{CDFEFS}, global-in-time existence and uniqueness of solutions to system \eqref{eq:pde-sys}-\eqref{eq:newtonian_potentials} is proven by (formally) re-writing the system in the pseudo-inverse formalism and by using the concept of gradient flows in Hilbert spaces \textit{\`a la Br\'ezis}, cf. \cite{Brezis}.
With potentials $H_1=H_2=-K_1=-K_2$ featuring a repulsive singularity of logarithmic type at the origin, system \eqref{eq:pde-sys} has also been studied in the context of multi-sign systems (arising e.g. in semiconductor theory) and evolution models for dislocations in crystals, cf.  \cite{mainini2012,alicandro2014metastability,garroni20}. 

\bigskip
In this paper we prove that the PDE system \eqref{eq:pde-sys}-\eqref{eq:newtonian_potentials}, namely
\begin{equation}\label{eq:pde-sys-newtonian}
\begin{cases}
\partial_t\rho=-\partial_x(\rho \partial_x |\cdot|*\rho)+\partial_x(\rho \partial_x |\cdot|*\eta),\\
\partial_t\eta=-\partial_x(\eta \partial_x|\cdot|*\eta)+\partial_x(\eta \partial_x |\cdot|*\rho),
\end{cases}
\end{equation}
can be obtained as the \emph{many-particle limit} of the deterministic ODE system
\begin{equation}\label{eq:ode-sys-newtonian}
\begin{cases}
\displaystyle{\dot x_i(t)=\sum_{x_k(t)\neq x_i(t)}m_k\sign(x_i(t)-x_k(t))-\sum_{y_k(t)\neq x_i(t)}n_k\sign(x_i(t)-y_k(t))},\\
\displaystyle{\dot y_j(t)=\sum_{x_k(t)\neq x_i(t)}n_k\sign(y_j(t)-y_k(t))-\sum_{y_k(t)\neq x_i(t)}m_k\sign(y_j(t)-x_k(t))},
\end{cases}
\end{equation}
with $i=1,..,N$, and $j=1,...,N$. System \eqref{eq:ode-sys-newtonian} models the movement of $N$ particle for each species, with masses $m_1,\ldots,m_N$ for the $x$-species and $n_1,\ldots,n_N$ for the $y$-species, under the effect of repulsive Newtonian potentials for same-species interactions and attractive Newtonian potentials for cross-species interactions.

We stress that, unlike the associated scalar model studied in \cite{BCDFP}, particles in the ODE system \eqref{eq:ode-sys-newtonian} \emph{may overlap}. When this happens, the right-hand side of \eqref{eq:ode-sys-newtonian} features a jump discontinuity, which brings additional difficulties. To bypass this problem and to better understand the dynamics of \eqref{eq:ode-sys-newtonian}, we frame it rigorously as the (finite dimensional) gradient flow of the (convex, in a suitable metric sense) functional
\[-\frac{1}{2}\sum_{i,j}m_i m_j|x_i-x_j| - \frac{1}{2}\sum_{i,j}n_i n_j|y_i-y_j| + \sum_{i,j}m_i n_j |x_i-y_j|,\]
in the convex cone $\mathcal{C}^N\times \mathcal{C}^N$ of ordered configurations 
\[x_1\leq x_2\leq \ldots\leq x_N\,,\qquad y_1\leq y_2\leq \ldots \leq y_N.\]
More precisely, among other issues:
\begin{itemize}
    \item We prove that the sub-differential of this functional is always non-empty for any given configuration in $\mathcal{C}^N\times \mathcal{C}^N$ (including overlapping of particles of opposite species).
    \item We analyse \emph{collisions} among particles (which are possible because particles do not ``slow down'' when they get very close due to the lack of regularity of the interaction potential) and prove that \emph{particles of the same species never collide}. Moreover, we provide explicit necessary and sufficient conditions for particles of opposite species to cross each other.
    \item We explore the case of initial overlapping of particles and provide the explicit solution to the corresponding particle system.
\end{itemize}
These properties are preparatory to prove the main result of this paper, which is the rigorous derivation of  solutions to  \eqref{eq:pde-sys-newtonian} with $L^1$ initial data as many-particle limits of the empirical measures of the particle system \eqref{eq:ode-sys-newtonian}. 

More precisely, we consider a pair of nonnegative initial densities $\rho_0, \eta_0 \in L^1$, both with unit mass. We approximate them via atomic measures by considering 
$N$ particles, $x_1,x_2,...,x_N$, of the first species and another $N$ particles, $y_1,...,y_N$, of the second species, with non-zero masses, $m_1,...,m_N$ and $n_1,...,n_N$ respectively, such that $\sum_{i=1}^Nm_i=\sum_{k=1}^Nn_k=1$. We let those particles evolve according to the ODE system \eqref{eq:ode-sys-newtonian}. We then prove that the empirical measures
\begin{equation*}
    \rho^N(t,x)=\frac{1}{N}\sum_{i=1}^{N}\delta_{x_i(t)}(x), \quad \text{and}\quad \eta^N(t,x)=\frac{1}{N}\sum_{j=1}^{N}\delta_{y_j(t)}(x),
\end{equation*}
converge to the unique  gradient flow solution to \eqref{eq:pde-sys-newtonian} in a suitable distributional sense as $N\rightarrow+\infty$.

Such a result heavily relies on uniform estimates at the discrete level. In particular, we observe that in the $2$-species case  weak compactness in the measure sense by itself is insufficient to obtain consistency in the limit due to the cross-interaction terms. Indeed, since the cross-interaction terms cannot be symmetrised (unlike, for instance, the Keller-Segel one-species model), \emph{weak $L^1$ compactness} is needed in this case. We shall explain this issue in detail in Section \ref{sec:many_particle_limit}.

We emphasise that system \eqref{eq:pde-sys-newtonian} is not included in the theory of \cite{DFF} since the interaction potential in the (repulsive) intraspecific parts of $\mF$ is neither convex nor $\lambda$-convex, i.e., convex up to a quadratic perturbation. In one dimension this problem can be overcome as shown in \cite{CDFEFS}. Another difference with \cite{DFF} is that the analysis in \cite{CDFEFS} implies that \emph{particle solutions} are not  \emph{gradient flow solutions} to system \eqref{eq:pde-sys-newtonian}. Thus, the mean-field limit cannot be treated via the stability result mentioned previously since the atoms of the empirical measure may diffuse instantaneously.

Finally, we observe that our result is of interest in the framework posed in  \cite{garroni20} for two-species models for dislocations with logarithmic singular potentials, more precisely \eqref{eq:pde-sys} with $H_1=H_2=-K_1=-K_2$ having a singularity at the origin \say{not stronger} than a logarithmic one. In \cite{garroni20}, the convergence of a discrete particle system to the corresponding PDE system is proven in arbitrary dimensions on the torus. The approximating particle scheme is based on a regularisation of the singular kernels. It is important to emphasise that our approach is fundamentally different in that it does not hinge on a regularisation argument. Instead it relies on identifying the particle system as a gradient flow and an in-depth treatment of particle-particle interactions. Not only are we able to circumnavigate the regularisation argument by taking into account particle collisions, but we also uncover and use the underlying gradient flow structure of the problem which, ultimately, provides existence and uniqueness. As a result we obtain a particle approximation of the system that is less restrictive in that it does not depend on the regularisation strength, albeit in one dimension and for a less singular interaction kernel. We expand upon this aspect in more detail in Remark \ref{rem:garroni}. 

\medskip
The paper is organised as follows.
\begin{itemize}
    \item In Section \ref{sec:preliminaries} we present the right setting for our problem, including our concept of weak measure solution for \eqref{eq:pde-sys-newtonian} in Definition \ref{def:solutions}, and provide some preliminary concepts related with one-dimensional optimal transport. 
    \item Section \ref{sec:grad_flow} is devoted to proving global-in-time existence and uniqueness of solutions to system \eqref{eq:ode-sys-newtonian},  using the theory of \textit{gradient flows} in Hilbert spaces. The main result of this section is the one in Lemma \ref{lem:subdiff_empty} proving that the sub-differential of the discrete functional is always non-empty in $\mathcal{C}^N\times \mathcal{C}^N$. The existence and uniqueness result in the discrete case is provided in Theorem \ref{thm:discrete_gradientflow}.
    \item Upon establishing well-posedness for the system of ODEs, we focus on some important properties of its solutions in Section \ref{sec:properties}. In Theorem \ref{thm:collisions_different_species} we provide explicit conditions describing the behavior of particles of opposite species after collision. In Theorem \ref{thm:collisions_same_species} we prove that particles of the same species can never collide. In Theorem \ref{thm:overlap} we provide an explicit solution to \eqref{eq:ode-sys-newtonian} in case the initial condition features a "cluster" or overlapping particles of the two species.
    \item Finally, in Section \ref{sec:many_particle_limit} we prove our many-particle approximation result. We show that the empirical measure of the particle system \eqref{eq:ode-sys-newtonian} converges in a suitable sense to the unique gradient flow solution to \eqref{eq:pde-sys-newtonian}. The main result is stated in Theorem \ref{thm:convergence_main_1}. The basic estimates needed for the proof are provided in Propositions \ref{prop:convergence_weak_Lm} and \ref{cor:dunford}.
\end{itemize}

\section{Preliminaries}\label{sec:preliminaries}
Throughout the paper we denote by $\mptr$ the set of  probability measures with finite second moment, i.e.,
$$
    \mptr=\left\{\mu\in\mP(\R) \, |\, m_2(\mu)<+\infty\right\},
    \mbox{ where } m_2(\mu)=\int_{\R}|x|^2\,d\mu(x).
$$
We use the symbol $\mptra$ to denote  the set of measures in $\mptr$ which are absolutely continuous with respect to the Lebesgue measure, i.e., $\mptra=\mP(\R)\cap L^1((1+|x|^2)\,dx)$. Next, for any measure $\mu\in\mP(\R)$ and a Borel map $T:\R\to\R$, we denote by $\nu = T_{\#}\mu$ the push-forward of $\mu$ through $T$, defined by
\begin{align*}
    \nu(A)&=\mu(T^{-1}(A)),\qquad \qquad \ \ \mbox{for any Borel set}\ A\subset\R,\\
    \mbox{or}\ \int_\R f(y)\,dT_{\#}\mu(y)&=\int_\R f(T(x))\,d\mu(x), \qquad \mbox{for any measurable function}\ f.
\end{align*}
Here, $T$ is usually referred to as \emph{transport map} pushing $\mu$ to $\nu$. Next, we equip the set $\mptr$ with the $2$-Wasserstein distance, which is defined for any $\mu,\nu\in \mptr$ as
\begin{equation}\label{wass}
W_2(\mu,\nu)=\left(\inf_{\gamma\in\Gamma(\mu,\nu)}\int_{\R^2}|x-y|^2\, d\gamma(x,y)\right)^{1/2},
\end{equation}
where $\Gamma(\mu,\nu)$ is the class of transport plans between $\mu$ and $\nu$, that is,
\begin{align*}
	\Gamma(\mu, \nu):= \{ \gamma\in \mP(\R^2)\,|\, \pi^1_{\#}\gamma = \mu, \,\pi^2_{\#}\gamma = \nu\},
\end{align*}
where $\pi^i:\R\times\R\rightarrow\R$, $i=1,2$, denotes the projection operator on the $i^\mathrm{th}$ component of the product space $\R^2$.  Setting $\Gamma_0(\mu,\nu)$ as the class of optimal plans, i.e., minimisers of \eqref{wass}, the (squared) Wasserstein distance can be written as
$$
	W_2^2(\mu,\nu)=\int_{\R^2}|x-y|^2\,d\gamma(x,y),
$$
for any $\gamma\in\Gamma_0(\mu,\nu)$. The set $\mptr$ equipped with the $2$-Wasserstein metric is a complete metric space which can be seen as a length space, see for instance \cite{AGS,S,V1,V2}. Since we are dealing with the evolution of two interacting species, we shall work on the product space $\mptr\times\mptr$ equipped with the 2-Wasserstein product distance defined via
$$
\mW_2^2(\gamma,\tilde \gamma)=W_2^2(\rho,\tilde \rho)+W_2^2(\eta,\tilde \eta),
$$
for all $\gamma=(\rho,\eta), \tilde \gamma=(\tilde \rho,\tilde \eta)$ in $\mptr\times\mptr$. Now, let us introduce a crucial tool for the one-dimensional case. For a given $\mu\in\mptr$ its cumulative distribution function is given by
\begin{equation}\label{eq:distribution-functions}
	F_\mu(x)=\mu((-\infty,x]).
\end{equation}
Since $F_\mu$ is a non-decreasing, right-continuous function such that
\begin{align*}
	\lim_{x\rightarrow -\infty} F_\mu(x) = 0, \quad \mbox{and} \quad 	\lim_{x\rightarrow +\infty} F_\mu(x) = 1,
\end{align*}
 we may define the pseudo-inverse function $X_\mu$ associated to $F_\mu$, by
\begin{equation}
	\label{eq:pseudoinverse_def}
	X_\mu(s):=\inf_{x\in \R}\{F_\mu(x)>s\},
\end{equation}
for any $s\in (0,1)$. It is easy to see that $X_\mu$ is right-continuous and non-decreasing as well. Having introduced the pseudo-inverse, let us now recall some of its important properties. First we notice that it is possible to pass from $X_\mu$ to $F_\mu$ as follows
\begin{equation}
	\label{eq:F_intermsof_X}
	F_\mu(x)=\int_0^1\mathds{1}_{(-\infty,x]}(X_\mu(s))\,ds=|\{X_\mu(s)\le x\}|.
\end{equation}
For any probability measure $\mu\in\mptr$ and the pseudo-inverse, $X_\mu$, associated to it, we have
\begin{equation}
	\label{eq:CoV_PseudoInverse}
	\int_\R f(x)\,d\mu(x)=\int_0^1f(X_\mu(s))\,ds,
\end{equation}
for every bounded continuous function $f$. Moreover, for $\mu,\nu\in\mptr$, the Hoeffding-Fr\'echet theorem \cite[Section 3.1]{RR} allows us to represent the 2-Wasserstein distance, $W_2(\mu,\nu)$, in terms of the associated pseudo-inverse functions via
\begin{equation}
	\label{eq:W2PseudoInverses}
	W_2^2(\mu,\nu)=\int_0^1|X_\mu(s)-X_\nu(s)|^2\,ds,
\end{equation}
since the optimal plan is given by $(X_\mu(s)\otimes X_\nu(s))_{\#}\mathcal{L}$, where $\mathcal{L}$ is the Lebesgue measure on the interval $[0,1]$, cf. also \cite{V1,CT}. We have seen that for every $\mu\in\mptr$ we can construct a non-decreasing $X_\mu$ according to \eqref{eq:pseudoinverse_def}, and by the change of variables formula \eqref{eq:CoV_PseudoInverse} we also know that $X_\mu$ is square integrable. Let us recall that this mapping is indeed a distance-preserving bijection between the space of probability measures with finite second moments and the convex cone of non-decreasing $L^2$-functions
\begin{equation}\label{eq:cone}
	\mathcal{C}:=\{f\in L^2(0,1)\,|\,f\ \mbox{is non-decreasing}\} \subset L^2(0,1).
\end{equation}
For $p\geq 1$, let us also introduce the \textit{$p$-Wasserstein} distance
\begin{equation}\label{eq:1-wass}
    W_p(\mu,\nu)=\inf_{\gamma\in\Gamma(\mu,\nu)} \left(\int_{\R^2}|x-y|^p\, d\gamma(x,y)\right)^{1/p},
\end{equation}
for any $\mu,\nu\in\mP_p(\R):=\{\mu\in\mP(\R)\ |\ m_p(\mu):=\int_\R|x|^p\,d\mu(x)<+\infty\}$. Since our problem is set in one space dimension, we have
\begin{equation}\label{eq:equiv-1-wass}
W_p(\mu,\nu)=\|X_\mu-X_\nu\|_{L^p([0,1])}.
\end{equation}
In the case $p=1$ we also have
\[W_1(\mu,\nu)=\|F_\mu - F_\nu\|_{L^1(\R)}.\]
We refer to \cite{V1,AGS,V2,S} for further details. The $p$-Wasserstein product distance is given by
$$
    \mW_p(\gamma,\tilde \gamma)=W_p(\rho,\tilde \rho)+W_p(\eta,\tilde \eta),
$$
for all $\gamma=(\rho,\eta), \tilde \gamma=(\tilde \rho,\tilde \eta)$ in $\mP_p(\R)\times\mP_p(\R)$.

For all $(\mu,\nu) \in \mptr\times\mptr$, we define the interaction energy functional
\[\mF(\mu,\nu)=-\frac{1}{2}\iint_{\R\times\R}|x-y|d\mu(y)d\mu(x)-\frac{1}{2}\iint_{\R\times\R}|x-y|d\nu(y)d\nu(x)+\iint_{\R\times\R}|x-y|d\mu(x)d\nu(y).\]
The following lemma is proven for absolutely continuous measures in \cite{CDFEFS}. Below, we extend it to general measures for the sake of self-containedness.

\begin{lem}\label{lem:functional_nonnegative}
For all $(\mu,\nu) \in \mptr\times\mptr$ we have
\[\mF(\mu,\nu)\geq 0.\]
\end{lem}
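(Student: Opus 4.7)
The plan is to reduce the inequality to the trivial fact that $\int (F_\mu - F_\nu)^2\,dz \geq 0$, via a single elementary identity for $|x-y|$ in one dimension. The key observation I would use is the representation
\[|x-y| = \int_\R \bigl(\mathds{1}_{(-\infty,z]}(x) - \mathds{1}_{(-\infty,z]}(y)\bigr)^2\,dz,\]
which is immediate once one notes that the integrand equals $1$ exactly for $z$ strictly between $\min\{x,y\}$ and $\max\{x,y\}$, and vanishes otherwise.

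First I would substitute this identity into each of the three double integrals defining $\mF(\mu,\nu)$ and swap the order of integration by Fubini--Tonelli. This exchange is legitimate because $\mu,\nu \in \mptr$ implies $m_1(\mu), m_1(\nu)<\infty$ (Cauchy--Schwarz), so all three double integrals $\iint|x-y|\,d\mu\,d\mu$, $\iint|x-y|\,d\nu\,d\nu$ and $\iint|x-y|\,d\mu\,d\nu$ are finite, and every integrand is absolutely integrable. Expanding the square using the fact that indicator functions are $\{0,1\}$-valued, and using $\mu(\R)=\nu(\R)=1$ to factor the resulting single integrals, one obtains
\[\iint |x-y|\,d\mu(x)\,d\mu(y) = \int_\R 2 F_\mu(z)\bigl(1-F_\mu(z)\bigr)\,dz,\]
the analogous identity with $\nu$ in place of $\mu$, and
\[\iint |x-y|\,d\mu(x)\,d\nu(y) = \int_\R \bigl(F_\mu(z)+F_\nu(z)-2F_\mu(z)F_\nu(z)\bigr)\,dz.\]

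Assembling these three identities with the coefficients appearing in the definition of $\mF(\mu,\nu)$, the linear-in-$F$ contributions cancel and the quadratic part is a perfect square, producing
\[\mF(\mu,\nu) = \int_\R \bigl(F_\mu(z)-F_\nu(z)\bigr)^2\,dz \geq 0.\]
This proves the claim and in fact identifies $\mF(\mu,\nu)$ with the squared $L^2(\R)$-distance of the two cumulative distribution functions (a byproduct which may be useful elsewhere). There is no real obstacle in this argument: the only nontrivial point is the initial observation that $|x-y|$ admits the above quadratic representation; once this is in hand the proof is a bookkeeping computation, with Fubini justified by the finite first moments of measures in $\mptr$.
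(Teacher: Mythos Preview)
Your proof is correct and takes a genuinely different route from the paper. The paper first treats the case of absolutely continuous measures with continuous densities: writing $\sigma=\rho-\eta$ and using the distributional identity $N''=2\delta_0$ together with an integration by parts (whose boundary terms must be shown to vanish via the finite second moments), it obtains $\mF(\mu,\nu)=\tfrac14\int(N'\ast\sigma)^2\,dx\geq 0$. The general case is then recovered by a density/approximation argument in $\mW_2$. Your argument bypasses both steps: the indicator identity $|x-y|=\int_\R(\mathds{1}_{(-\infty,z]}(x)-\mathds{1}_{(-\infty,z]}(y))^2\,dz$ together with Tonelli (everything is nonnegative) works directly for arbitrary $(\mu,\nu)\in\mptr^2$ and yields the explicit closed-form $\mF(\mu,\nu)=\|F_\mu-F_\nu\|_{L^2(\R)}^2$. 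Note that this is in fact the same quantity the paper obtains in the smooth case, since $N'\ast\sigma=\sign\ast(\rho-\eta)=2(F_\mu-F_\nu)$, but your derivation is more elementary (no integration by parts, no boundary terms, no approximation step) and immediately delivers the identity at the level of general probability measures rather than only the inequality.
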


\begin{proof}
We first consider the case in which both $\mu$ and $\nu$ are absolutely continuous with respect to the Lebesgue measure and have continuous densities $\rho$ and $\eta$ respectively. In this case
\begin{align*}
     \mF(\mu,\nu)&= -\frac{1}{2}\iint_{\R\times\R}|x-y|\left(\rho(x)\rho(y)+\eta(x)\eta(y)-\rho(x)\eta(y)-\rho(y)\eta(x)\right)dx dy\\
    & =  -\frac{1}{2}\iint_{\R\times\R}|x-y|\sigma(x)\sigma(y) dx dy,\qquad\sigma=\rho-\eta.
\end{align*}
Recall that $N(x)=|x|$ satisfies $N'(x)=\sign(x)$ and $N''=2\delta_0$ in $\mathcal{D}'$. Therefore
\[\mF(\mu,\nu)=-\frac{1}{2}\int_{\R}N\ast\sigma (x)\delta_0\ast\sigma (x) dx =-\frac{1}{4}\int_{\R}N\ast\sigma (x)N''\ast\sigma (x) dx. \]
Integration by parts yields
\[\mF(\mu,\nu)=\frac{1}{4}\int_\R \left(N'\ast\sigma (x)\right)^2 dx -\frac{1}{4}\left[N\ast\sigma(x)N'\ast\sigma(x)\right]_{x=-\infty}^{x=+\infty}.\]
Now, arguing as in the proof of \cite[Lemma 3.7]{CDFEFS}, the boundary term at infinity vanishes due to the fact that $\rho$ and $\eta$ have finite second moment and $\sigma$ has zero average. Hence, $\mF(\mu,\nu)\geq 0$.

Let us now consider the general case $(\mu,\nu)\in \mptr\times\mptr$. Assume there exists a pair $(\bar\mu,\bar\nu)\in \mptr\times\mptr$ such that $\mF(\bar\mu,\bar\nu)<0$. By density of $(C(\R)\cap \mptr)^2$ in $\mptr^2$ with respect to the $2$-Wasserstein distance, there exist sequences $\rho_n, \eta_n\in C(\R)$ such that $W_2(\bar\mu,\rho_n)\rightarrow 0$ and $W_2(\bar\nu,\eta_n)\rightarrow 0$ as $n\rightarrow +\infty$. Consequently, $\rho_n\otimes\rho_n\rightarrow \bar\mu\otimes\bar\mu$ as $n\rightarrow +\infty$ in the weak measure sense, as well as $\eta_n\otimes\eta_n\rightarrow \bar\nu\otimes\bar\nu$ and $\rho_n\otimes\eta_n\rightarrow \bar\mu\otimes\bar\nu$ as $n\rightarrow +\infty$. Moreover, since $(x,y)\mapsto|x-y|$ has a sub-quadratic growth at infinity, by a standard cut-off argument, we have
\begin{align*}
    & \iint_{\R\times\R}|x-y|d\rho_n(x)d\rho_n(y)\rightarrow \iint_{\R\times\R}|x-y|d\bar\mu(x)d\bar\mu(y),\\
    & \iint_{\R\times\R}|x-y|d\eta_n(x)d\eta_n(y)\rightarrow \iint_{\R\times\R}|x-y|d\bar\nu(x)d\bar\nu(y),\\
    & \iint_{\R\times\R}|x-y|d\rho_n(x)d\eta_n(y)\rightarrow \iint_{\R\times\R}|x-y|d\bar\mu(x)d\bar\nu(y),
\end{align*}
which implies
\[
    \mF(\rho_n,\eta_n)\rightarrow \mF(\bar\mu,\bar\nu).
\]
On the other hand, the previous case implies
\[
    \mF(\rho_n,\eta_n)\geq 0, \qquad \hbox{for all $n$},
\]
which contradicts the assumption $\mF(\bar\mu,\bar\nu)<0$.
\end{proof}

We conclude this subsection by providing the rigorous concepts of solution to the continuum model \eqref{eq:pde-sys-newtonian} to be used in the many-particle limit. Such a concept of solution only refers to the case of absolutely continuous initial data.
\begin{defn}\label{def:solutions}
Let $(\rho_0,\eta_0)\in\mptra^2$. We say that the absolutely continuous curve $(\rho(\cdot),\eta(\cdot))\in C([0,+\infty)\,;\,\mptra^2)$ is a weak measure solution to \eqref{eq:pde-sys-newtonian} if, for all test functions $\varphi,\phi \in C^1_c([0,+\infty)\times \R)$ we have
\begin{subequations}\label{eq:weak_formulation}
    \begin{equation}
    \begin{split}
    - \int_0^{+\infty} &\int_\R \rho(x,t)\varphi_t(x,t)\, dx\, dt - \int_\R\rho_0(x)\varphi(x,0) dx\\
    & \ = \int_0^{+\infty}\iint_{\R\times\R} \rho(x,t) \rho(y,t)\sign(x-y)\varphi_x(x,t)\,dy\, dx\, dt\\
    & \quad \ - \int_0^{+\infty}\iint_{\R\times\R}\rho(x,t) \sign(x-y)\eta(y,t)\varphi_x(x,t)\,dy \,dx\, dt\,,
    \end{split}
    \end{equation}
and    
    \begin{equation}
    \begin{split}
     - \int_0^{+\infty} &\int_\R \eta(x,t)\phi_t(x,t)\, dx\, dt - \int_\R\eta_0(x)\phi(x,0)\, dx \\
    & \ = \int_0^{+\infty}\iint_{\R\times\R} \eta(x,t) \eta(y,t)\sign(x-y)\phi_x(x,t)\,dy\, dx\, dt\\
    & \quad \ - \int_0^{+\infty}\iint_{\R\times\R}\eta(x,t) \sign(x-y)\rho(y,t)\phi_x(x,t)\,dy\, dx dt.
    \end{split}
    \end{equation}
\end{subequations}

\end{defn}

\begin{rem}\label{rem:uniqueness}
The existence and uniqueness of solutions according to Definition \ref{def:solutions} follows from the existence and uniqueness result of gradient flow solutions proven in \cite{CDFEFS} and arguing as in \cite[Theorem 11.2.8]{AGS}.
\end{rem}

\section{Discrete Gradient Flow}\label{sec:grad_flow}
In this section we pose system \eqref{eq:ode-sys-newtonian} as gradient flow of a suitable discrete interaction energy functional. We shall denote by $x=(x_1,...,x_N)$ and $y=(y_1,...,y_N)$ the vectors corresponding to the particles of the two different species, where each particle $x_i$ has mass $m_i\in\R_+$ and $y_j$ has mass $n_j\in \R_+$, for all $i,j\in\{1,2,...,N\}$. Throughout, we shall work in the Hilbert space $(\RN\times\RN,\langle\cdot,\cdot\rangle_w)$, with the weighted scalar product defined by
$$
    \langle Z^1,Z^2\rangle_{w} := \sum_{i=1}^N m_ix_i^1 x_i^2+\sum_{j=1}^N n_j y_j^1 y_j^2,
$$
where $Z^1=(x^1,y^1),\ Z^2=(x^2,y^2)\in\RN\times\RN$. We drop the $w$-subscript in the definition of the weighted norm
\[\|Z\|^2=\langle Z,Z\rangle_w.\]

Since the problem is posed in one spatial dimension, we may label the particles such that they are monotonically ordered. Therefore, up to possibly relabelling, we may restrict the evolution to the convex cone $\mathcal{C}^N\times\mathcal{C}^N$, with
$$
   \mathcal{C}^N:=\left\{x\in\RN:x_1\le x_2\le...\le x_N\right\}.
$$

Since the analysis below requires a careful treatment of all particles of the two species, it is useful to introduce the following index notation that provides a precise way to label particles depending on the particles' relative location.
\begin{defn}[Index Notation]
    Given $Z=(x,y)\in \mathcal{C}^N\times \mathcal{C}^N$, for all $i \in\{1,\ldots,N\}$
    we define
    \begin{align*}
        & \sigma[x_i]=\left\{k\in \{1,\ldots,N\}\,:\,\,x_k=x_i\right\},\qquad \gamma[x_i]=\left\{j\in \{1,\ldots,N\}\,:\,\, y_j=x_i\right\},\\
        & \sigma^+[x_i]=\left\{k\in \{1,\ldots,N\}\,:\,\,x_k>x_i\right\},\qquad \gamma^+[x_i]=\left\{j\in \{1,\ldots,N\}\,:\,\, y_j>x_i\right\},\\
        & \sigma^-[x_i]=\left\{k\in \{1,\ldots,N\}\,:\,\,x_k<x_i\right\},\qquad \gamma^-[x_i]=\left\{j\in \{1,\ldots,N\}\,:\,\, y_j<x_i\right\},
    \end{align*}
    and for all $j\in \{1,\ldots, N\}$ we define
    \begin{align*}
        & \sigma[y_j]=\left\{h\in \{1,\ldots,N\}\,:\,\, y_h=y_j\right\}\,,\qquad
    \gamma[y_j]=\left\{i\in \{1,\ldots,N\}\,:\,\, x_i=y_j\right\},\\
        & \sigma^+[y_j]=\left\{h\in \{1,\ldots,N\}\,:\,\, y_h>y_j\right\}\,,\qquad
    \gamma^+[y_j]=\left\{i\in \{1,\ldots,N\}\,:\,\, x_i>y_j\right\},\\
     & \sigma^-[y_j]=\left\{h\in \{1,\ldots,N\}\,:\,\, y_h<y_j\right\}\,,\qquad
    \gamma^-[y_j]=\left\{i\in \{1,\ldots,N\}\,:\,\, x_i<y_j\right\}.
    \end{align*}
\end{defn}

\begin{rem}[Index Notation]
    Clearly, some of the sets $\sigma[x_i]$ and $\sigma[y_j]$ may be singletons. If a set $\sigma[x_i]$ contains more than one index, it means that the particle configuration $Z$ contains an $x$-cluster, i.e., a group of colliding particles of the $x$-species. Similarly, some of the sets $\gamma[x_i]$ and $\gamma[y_j]$ may be empty. A non-empty $\gamma[x_i]$ implies that there are particles of the $y$-species attached to $x_i$ in the $Z$ configuration. Moreover, note that the sets $\sigma^-[x_1]$, $\sigma^+[x_N]$, $\sigma^-[y_1]$, $\sigma^+[y_N]$ are always empty. 
\end{rem}

\begin{rem}[Discrete Fubini]
\label{rem:DiscreteFubini}
Throughout the main body we need to rearrange sums over indices of the particles involved in the dynamics. It is useful to highlight the following equality of index sets:
\begin{align*}
    \left\{ (i,j)\; \big| \; i < j \leq N,\; i = 1, \ldots, N \right\} = \left\{ (i,j)\; \big| \; 1 \leq i < j,\; j = 1, \ldots, N \right\},
\end{align*}
and therefore
\begin{align}
    \sum_{i=1}^N \sum_{j > i} Q_{i,j} = \sum_{j=1}^N \sum_{i < j} Q_{i,j} = \sum_{i=1}^N \sum_{j > i} Q_{j,i},
\end{align}
for any quantity $Q\in \R^{N\times N}$. Note that the first equality holds due to Fubini and the second one is due to swapping the roles of $i$ and $j$.
\end{rem}

\begin{lem}[Properties of Index Notation -- I]
\label{lem:PropertiesIndexNotationI}
For a given distribution of particles, $Z\in \mathcal{C}^N$ there exists $\epsilon_0>0$ such that for all $Z'\in \mathcal{C}^N$ with $|Z'-Z|_\infty< \epsilon_0/3$ there holds
\begin{align}
    \label{eq:inclusion_for_sigma}
    \sigma[x_i'] = \sigma[x_i'] \cap \sigma[x_i],
\end{align}
and the statement remains true when replacing $x_i$ by $y_i$. 
\end{lem}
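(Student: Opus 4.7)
The plan is to exploit the fact that the configuration $Z=(x,y)$ consists of finitely many particles, so the set of distances $|x_i-x_j|$ over pairs with $x_i\neq x_j$ is a finite set of strictly positive numbers, and hence has a strictly positive minimum. A perturbation of amplitude less than one-third of this minimum gap cannot merge two previously distinct positions, although it may of course split a cluster. This directly implies that every coincidence in the perturbed configuration was already present in the original one, which is exactly the set-theoretic inclusion $\sigma[x_i']\subseteq \sigma[x_i]$ encoded by the claim $\sigma[x_i']=\sigma[x_i']\cap\sigma[x_i]$.

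Concretely I would set
\[
\epsilon_0 := \min\Bigl(\{|x_i-x_j|: x_i\neq x_j\}\cup\{|y_i-y_j|: y_i\neq y_j\}\Bigr),
\]
with the convention $\min\emptyset = +\infty$, which covers the degenerate case in which all $x$-particles coincide and all $y$-particles coincide (then $\sigma[x_i]=\sigma[y_i]=\{1,\ldots,N\}$ and the inclusion is trivial). In the non-degenerate case the set is finite and made of strictly positive numbers, so $\epsilon_0>0$.

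Now fix $Z'\in\mathcal{C}^N\times\mathcal{C}^N$ with $|Z'-Z|_\infty<\epsilon_0/3$ and let $k\in\sigma[x_i']$, so that $x_k'=x_i'$. Assume for contradiction that $k\notin\sigma[x_i]$, i.e.\ $x_k\neq x_i$. Then by the very definition of $\epsilon_0$ we have $|x_k-x_i|\geq \epsilon_0$, while the triangle inequality yields
\[
|x_k-x_i|\leq |x_k-x_k'|+|x_k'-x_i'|+|x_i'-x_i| < \frac{\epsilon_0}{3}+0+\frac{\epsilon_0}{3}=\frac{2\epsilon_0}{3},
\]
a contradiction. Hence $x_k=x_i$, which gives $k\in\sigma[x_i]$. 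The argument is identical word-for-word when $x$ is replaced by $y$, so the conclusion also holds for $\sigma[y_i']$.

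The content is entirely elementary; the only subtlety to flag is the degenerate case of a fully collided species, which is what motivates the convention $\min\emptyset=+\infty$ in the definition of $\epsilon_0$. No further compactness or topological input is needed, and the same $\epsilon_0$ works uniformly for all indices $i$ simultaneously because the minimum is taken over all pairs at once.
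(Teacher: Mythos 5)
Your proof is correct and takes essentially the same approach as the paper: both establish the nontrivial inclusion by assuming $k\in\sigma[x_i']$ with $x_k\neq x_i$ and deriving the contradiction $\epsilon_0\leq |x_k-x_i|<2\epsilon_0/3$ via the triangle inequality. The only difference is that you make $\epsilon_0$ explicit (with the $\min\emptyset=+\infty$ convention for the fully collapsed case), whereas the paper defers the definition of $\epsilon_0$ to the proof of the subsequent Lemma~\ref{lem:PropertiesIndexNotationII}, where it is taken over all pairs $Z_k\neq Z_l$ so that the same threshold also controls the cross-species index sets $\gamma[\cdot]$.
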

\begin{proof}
The inclusion ``$\supset$'' is trivial and we only show the reverse inclusion ``$\subset$''. To this end, let $j \in \sigma[x_i']$ and assume that $j\notin \sigma[x_i]$. This implies that either $j\in \sigma^-[x_i]$ or $j\in \sigma^+[x_i]$. Assume, for instance, that $j\in \sigma^-[x_i]$, and therefore $x_j < x_i$ due to the fact that $Z$ is ordered. However, then
\begin{align}
    \epsilon_0 < x_i-x_j = x_i-x_i'+x_i' -x_j =x_i-x_i'+x_j' -x_j< 2\epsilon_0/3,
\end{align}
which is impossible. Similarly, $j\notin \sigma^+[x_i]$, which completes the proof.
\end{proof}

\begin{lem}[Properties of Index Notation -- II]
\label{lem:PropertiesIndexNotationII}
For a given distribution of particles, $Z \in \mathcal{C}^N$, there exists $\epsilon_0>0$ such that for all $Z' \in \mathcal{C}^N$ with $|Z'-Z|_\infty < \epsilon_0/3$ there holds
\begin{align}
    \label{eq:prop_sigma_minus}
    \begin{split}
    \sigma^-[x'_i] &= \sigma^-[x_i]\dot{\cup}\left(\sigma[x_i]\cap \sigma^-[x'_i]\right),\\
    \sigma^-[y'_i] &= \sigma^-[y_i]\dot{\cup}\left(\sigma[y_i]\cap \sigma^-[y'_i]\right),
    \end{split}
\end{align}
as well as
\begin{align}
    \label{eq:prop_sigma}
    \begin{split}
    \sigma[x'_i] &= \sigma[x_i] \setminus (\sigma^-[x'_i]\dot{\cup} \sigma^+[x'_i]),\\
    \sigma[y'_i] &= \sigma[y_i] \setminus (\sigma^-[y'_i]\dot{\cup} \sigma^+[y'_i]).
    \end{split}
\end{align}
Concerning the interspecies index sets, there holds
\begin{align}
    \label{eq:prop_gamma_plusminus}
    \begin{split}
    \gamma^\pm[x'_i] &=\gamma^\pm[x_i]\dot{\cup}(\gamma[x_i]\cap \gamma^\pm[x'_i])\\
    \gamma^\pm[y'_j] &=\gamma^\pm[y_j]\dot{\cup}(\gamma[y_j]\cap \gamma^\pm[y'_j]),
    \end{split}
\end{align}
as well as
\begin{align}
    \label{eq:prop_gamma}
    \begin{split}
    \gamma[x'_i] &=\gamma[x_i]\setminus (\gamma^-[x'_i]\dot{\cup}\gamma^+[x'_i])\\
    \gamma[y'_j] &= \gamma[y_j]\setminus (\gamma^-[y'_j]\dot{\cup}\gamma^+[y'_j]).
    \end{split}
\end{align}
\end{lem}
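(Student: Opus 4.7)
The plan is to pick $\epsilon_0$ as the smallest strictly positive distance between any two particle locations drawn from the combined collection of both species, that is,
\[\epsilon_0:=\min\bigl\{|z-z'|\,:\, z,z'\in \{x_1,\dots,x_N,y_1,\dots,y_N\},\ z\neq z'\bigr\},\]
with the convention that if the set on the right is empty (all particles coincide), any positive value works since all $\sigma^\pm,\gamma^\pm$ sets in $Z$ are then empty and the identities become tautologies. The point of this choice is a simple triangle-inequality observation: for every $Z'$ with $|Z'-Z|_\infty<\epsilon_0/3$, any pair of particles that is strictly ordered in $Z$ remains in the same strict order in $Z'$, while particles coinciding in $Z$ may split arbitrarily in $Z'$. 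This is the single mechanism powering every identity in the statement.

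To establish \eqref{eq:prop_sigma_minus}, first I would note that any $k\in \sigma^-[x_i]$ satisfies $x_i - x_k \geq \epsilon_0$, so the triangle inequality gives $x_i' - x_k' > \epsilon_0/3 > 0$, i.e. $k\in \sigma^-[x'_i]$. Conversely, given $k\in \sigma^-[x'_i]$, the same gap argument rules out $k\in \sigma^+[x_i]$; hence $k$ lies either in $\sigma^-[x_i]$ or in $\sigma[x_i]\cap \sigma^-[x'_i]$, and the two pieces are disjoint by definition. For \eqref{eq:prop_sigma}, Lemma \ref{lem:PropertiesIndexNotationI} already provides $\sigma[x'_i]\subseteq \sigma[x_i]$; since $\sigma[x'_i]$ is by definition disjoint from $\sigma^\pm[x'_i]$, the inclusion $\sigma[x'_i]\subseteq \sigma[x_i]\setminus(\sigma^-[x'_i]\dot\cup\sigma^+[x'_i])$ is immediate, and for the reverse inclusion any $k\in \sigma[x_i]\setminus(\sigma^-[x'_i]\cup \sigma^+[x'_i])$ satisfies $x_k'=x_i'$, hence $k\in \sigma[x'_i]$. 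The $y$-species statements follow by identical arguments with symbols exchanged.

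The cross-species identities \eqref{eq:prop_gamma_plusminus} and \eqref{eq:prop_gamma} will be proved by exactly the same two arguments, now comparing a particle $x_i$ (or $y_j$) with particles of the opposite species; the choice of $\epsilon_0$ was tailored precisely so that same-species and cross-species gaps are controlled simultaneously. The only ingredient not already provided by Lemma \ref{lem:PropertiesIndexNotationI} is the inclusion $\gamma[x'_i]\subseteq \gamma[x_i]$ (and its $y$-analog), which follows verbatim from that proof after replacing $x_j$ by $y_j$. I expect no real conceptual obstacle here; the main difficulty is purely bookkeeping, namely keeping the six distinct index sets per particle in separate compartments and verifying that the factor $1/3$ in the smallness condition is small enough to forbid the reversal of any strict inequality yet large enough to allow the splitting of clusters that the decompositions are designed to track.
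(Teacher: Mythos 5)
Your proof is correct and follows essentially the same route as the paper: you choose $\epsilon_0$ as the minimum positive gap among all $2N$ particle positions and then exploit the triangle inequality to argue that strict orderings in $Z$ persist in $Z'$ while coincident particles may split. Where the paper's $\supset$ and $\subset$ arguments for \eqref{eq:prop_sigma_minus} mix ordering-of-the-cone reasoning with the gap estimate, you apply the gap argument uniformly (ruling out $k\in\sigma^+[x_i]$ directly), which is a slight streamlining; you also correctly flag that the inclusion $\gamma[x_i']\subset\gamma[x_i]$, while not literally in Lemma \ref{lem:PropertiesIndexNotationI}, is proved by the identical triangle-inequality estimate — this is the same small extension the paper carries out inline in the proof of \eqref{eq:prop_gamma}.
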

\begin{proof}
Let $Z \in \mathcal{C}^N$ be given. We set
\begin{align*}
    \epsilon_0 := \min \left\{|Z_k-Z_l| \; \big|\; 1 \leq k,l \leq 2N,\, \text{s.t. } Z_k \neq Z_l\right\} >0.
\end{align*}
Throughout, we assume that $Z'\in \mathcal{C}^N$, such that $|Z'-Z|_\infty < \epsilon_0/3$. We begin by proving statement \eqref{eq:prop_sigma_minus}.\\ ``$\subset$'':  Let $j \in \sigma^-[x_i']$. By definition this means that $x_j' < x_i'$. Since $Z' \in \mathcal{C}^N$ is ordered, we infer $j < i$. Since $Z$ is ordered, too, we have $x_j \leq x_i$, which means that
\begin{align*}
    j 
    &\in \sigma^-[x_i'] \cap \left(\sigma[x_i] \dot\cup \sigma^-[x_i]\right)\\
    &= \left(\sigma^-[x_i'] \cap \sigma[x_i]\right) \dot\cup  \left(\sigma^-[x_i'] \cap \sigma^-[x_i]\right)\\
    &\subset \left(\sigma^-[x'_i]\cap \sigma[x_i]\right) \dot{\cup} \sigma^-[x_i],
\end{align*}
where the last inclusion is due to fact that, without intersecting with $\sigma^-[x_i']$, the second set in the union, becomes larger, i.e., $\sigma^-[x'_i]\cap \sigma^-[x_i] \subset \sigma^-[x_i]$. \\
``$\supset$'': Conversely, let $j \in \sigma^-[x_i]\dot{\cup}\left(\sigma[x_i]\cap \sigma^-[x'_i]\right)$. Clearly, if $j$ belongs to the second set the statement is trivially satisfied. If, on the other hand, $j\in 
\sigma^-[x_i]$, we have $x_j < x_i$ and, due to the fact that both $Z$ and $Z'$ are ordered, we first obtain $j<i$ and therefore $x_j'\leq x_i'$. We conclude  $j \in \sigma^-[x_i'] \dot \cup \sigma[x_i']$. We will now show that $j\in \sigma[x_i']$ is impossible which implies the statement. The argument is by contradiction and we assume $j\in \sigma[x_i']$, i.e., $x_j' = x_i'$. However, by definition of $\epsilon_0>0$, this implies
\begin{align*}
    \epsilon_0 < x_i - x_j = x_i - x_i' + x_i' - x_j = x_i - x_i' + x_j' - x_j < 2\epsilon_0/3,
\end{align*}
where the closeness assumption $|Z'-Z|_\infty<\epsilon_0/3$ entered in the last inequality. Clearly this statement is absurd and therefore, $j\in \sigma^-[x_i']$ which completes the proof of the first statement. The same statement is true for the $y$-species using the same line of reasoning.\\
We continue with statement \eqref{eq:prop_sigma}.\\
``$\subset$'': If $j\in \sigma[x_i']$, then trivially, $j\notin \sigma^-[x_i'] \dot\cup \sigma^+[x_i']$ and it remains to show that $j \in \sigma[x_i]$. As before the argument is by contradiction and we assume, for instance, that $j \in \sigma^-[x_i]$. As before
\begin{align*}
    \epsilon_0 < x_i-x_j = x_i-x_i'+x_j'-x_j < 2\epsilon_0/3,
\end{align*}
ruling out the case  $j \in \sigma^-[x_i]$. Similarly, $j \notin \sigma^+[x_i]$ leaving as only possibility $j \in \sigma[x_i]$ which proves the inclusion.\\
``$\supset$'': Conversely, if $j\in\sigma[x_i] \setminus \left(\sigma^-[x_i'] \dot\cup \sigma^+[x_i']\right)$, there holds $x_j = x_i$ and $x_i'\leq x_j' \leq x_i'$. In particular, $x_i'=x_j'$, and therefore $j\in \sigma[x_i']$, which concludes the proof of this inclusion.\\
Next, we prove statement \eqref{eq:prop_gamma_plusminus}. We only focus on the ``-'' case, as the statement for ``+'' is given in a similar manner.  Let us begin with ``$\subset$'': Let $j \in \gamma^-[x_i']$, i.e., $y_j' < x_i'$. Assume, that $j \in \gamma^+[x_i]$, i.e., that $y_j > x_i$. Using these two inequalities yields
\begin{align*}
    \epsilon_0 < y_j - x_i = y_j - y_j' + y_j' - x_i \leq y_j - y_j' + x_i' - x_i < 2 \epsilon_0/3,
\end{align*}
implying that $j\in \gamma[x_i] \dot \cup \gamma^+[x_i]$, which yields the statement together with the fact that $j \in \gamma^-[x_i']$. \\
Regarding the opposite inclusion, ``$\supset$'', it suffices to show $\gamma^-[x_i] \subset \gamma^-[x_i']$ as the statement is trivially satisfied if $j$ is in the set $(\gamma[x_i]\cap \gamma^-[x'_i])$. Again, arguing by contradiction, let us assume $j \notin \gamma^-[x_i']$, i.e., $y_j'\geq x_i'$. In this case, we observe
\begin{align*}
    \epsilon_0 < y_j' - x_i' = y_j'-y_j + y_j - x_i \leq 2\epsilon_0/3,
\end{align*}
which yields the statement. 
Finally, we prove statement \eqref{eq:prop_gamma} beginning with ``$\subset$''.\\
Let $j \in \gamma[x_i']$, i.e., $y_j'=x_i'$, and assume that $j\notin \gamma[x_i]$, for instance, $y_j < x_i$. In this case
\begin{align*}
    \epsilon_0 < x_i-y_j = x_i-x_i' + y_j' - y_j \leq 2\epsilon_0/3,
\end{align*}
which is a contradiction. Similarly, we show that $y_j>x_i$ which shows the assertion. Finally, we show the reverse inclusion, ``$\supset$'':\\
In this case $x_i' \leq y_j' \leq x_i'$, i.e., $y_j' = x_i'$, and therefore $j \in \gamma[x_i']$. This concludes the proof of the lemma.
\end{proof}

Now, we introduce the discrete interaction energy functional acting on a given $Z=(x,y)\in\RN\times\RN$, as follows
\begin{equation}\label{eq:discrete-inter-en-funct}
    \mF[Z]=-\frac{1}{2}\sum_{i\neq j}m_i m_j|x_i-x_j|-\frac{1}{2}\sum_{i\neq j}n_i n_j|y_i-y_j|+\sum_{i,j}m_in_j|x_i-y_j| + \mathcal{I}_{\mathcal{C}^N}(x)+\mathcal{I}_{\mathcal{C}^N}(y),
\end{equation}
where $\mathcal{I}_{\mathcal{C}^N}$ is the indicator function of the cone $\mathcal{C}^N$, i.e.,
\begin{equation}
\begin{cases}
0 &\text{if}\ x\in \mathcal{C}^N,\\
+\infty &\text{otherwise}.
\end{cases}
\end{equation}
We shall often use the notation 
\begin{align*}
    & S(x):=-\frac{1}{2}\sum_{i\neq j}m_i m_j|x_i-x_j|, \qquad S(y):=-\frac{1}{2}\sum_{i\neq j}n_i n_j|y_i-y_j|,
\end{align*}
and
\begin{align*}
    & C(x,y):=\sum_{i,j}m_in_j|x_i-y_j|,
\end{align*}
so that
\[
\mF[Z] = S(x) + S(y) + C(x,y) + \mathcal{I}_{\mathcal{C}^N}(x)+\mathcal{I}_{\mathcal{C}^N}(y).
\]
$S$ represents the self-interaction part, i.e., interactions within the same species, while $C$ accounts for cross-interactions.

The functional $\mF$ is proper, i.e., $D(\mF)=\left\{Z\in\RN\times\RN:\mF[Z]<+\infty\right\}\neq\emptyset$, since we have
$$
    \mF[Z]\le|x_N|+|y_N|,
$$
for any $Z=(x,y)\in \mathcal{C}^N\times \mathcal{C}^N$. Moreover, note that the self-interaction part of the functional can also be rewritten as
$$
   S(x) + S(y)=-\sum_{i=1}^N\sum_{\{j:\,x_j>x_i\}}m_i m_j(x_j-x_i)-\sum_{i=1}^N\sum_{\{j:\,y_j>y_i\}}n_i n_j(y_j-y_i).
$$
\begin{rem}\label{rem:equivalent-expressions-self}
Note that the terms corresponding to the index $i=N$ give null contribution in the above sum. Nevertheless, we keep them in order to have a further expression for the self-interaction part of the functional that we will use later on in Eqs. \eqref{eq:functional_deeper_Self_x} and \eqref{eq:functional_deeper_Self_y}. Moreover, for the sake of completeness, let us point out the equivalent formulation
$$
   S(x) + S(y)=-\sum_{i=1}^N\sum_{\{j:\,x_j<x_i\}}m_i m_j(x_i-x_j)-\sum_{i=1}^N\sum_{\{j:\,y_j<y_i\}}n_i n_j(y_i-y_j),
$$
where the terms corresponding to the index $i=1$ give null contribution.
\end{rem}
The above observation allows us to prove the next lemma.
\begin{lem}\label{lem:convexity-functional}
The functional $\mathcal{F}:\RN\times\RN\to\R$ is convex.
\end{lem}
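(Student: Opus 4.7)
The plan is to exploit the ordering enforced by the indicator functions: on the effective domain $\mathcal{C}^N\times \mathcal{C}^N$ the self-interaction part $S(x)+S(y)$, which looks concave on $\R^N\times \R^N$, actually collapses to a \emph{linear} expression, while the cross-interaction $C(x,y)$ is automatically convex as a non-negative combination of norms. Convexity of $\mathcal{F}$ then follows by summing convex pieces, with the indicator functions taking care of all configurations outside the cone.

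First I would handle the trivial case: if either argument lies outside $\mathcal{C}^N\times \mathcal{C}^N$, then by definition of $\mathcal{I}_{\mathcal{C}^N}$ the right-hand side of the convexity inequality is $+\infty$, so nothing is to be shown. It is therefore enough to check the inequality for $Z^0=(x^0,y^0)$ and $Z^1=(x^1,y^1)$ both in $\mathcal{C}^N\times\mathcal{C}^N$. Since the cone $\mathcal{C}^N$ is convex, any convex combination $Z^\lambda=(1-\lambda)Z^0+\lambda Z^1$ belongs to $\mathcal{C}^N\times\mathcal{C}^N$ as well; in particular $\mathcal{I}_{\mathcal{C}^N}$ contributes $0$ to every term on both sides, and it suffices to prove that $S(x)+S(y)+C(x,y)$ is convex on $\mathcal{C}^N\times \mathcal{C}^N$.

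Next, using the identity $|x_i-x_j|=x_j-x_i$ whenever $j>i$ and $x\in \mathcal{C}^N$ recorded in Remark \ref{rem:equivalent-expressions-self}, I would rewrite
\[
S(x)=-\sum_{i=1}^N\sum_{j>i} m_i m_j\,(x_j-x_i),\qquad S(y)=-\sum_{i=1}^N\sum_{j>i} n_i n_j\,(y_j-y_i),
\]
which are \emph{linear} (hence convex) functions of $Z$ on $\mathcal{C}^N\times \mathcal{C}^N$. For the cross term I would note that each map $(x,y)\mapsto |x_i-y_j|$ is convex on $\R^N\times \R^N$ (it is the composition of the convex function $|\cdot|$ with the linear map $(x,y)\mapsto x_i-y_j$), and since $m_in_j\ge 0$, the sum
\[
C(x,y)=\sum_{i,j} m_i n_j\,|x_i-y_j|
\]
is a conic combination of convex functions, hence convex on the whole product space.

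Adding these contributions and the (convex) indicator functions yields convexity of $\mathcal{F}$ on $\R^N\times \R^N$. I do not foresee a genuine obstacle here: the one subtlety worth emphasising is that the self-interaction $S$, taken on its own on $\R^N$, is \emph{concave} because of the overall minus sign in front of $|x_i-x_j|$, and the convexity of $\mathcal{F}$ is therefore a genuine effect of restricting to the ordered cone, where the absolute values linearise. The rest of the argument is standard.
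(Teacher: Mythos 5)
Your proposal is correct and follows essentially the same route as the paper: reduce to both arguments lying in the cone, observe that the self-interaction part linearises on $\mathcal{C}^N\times\mathcal{C}^N$ (the paper records this in Remark \ref{rem:equivalent-expressions-self} and exploits it by matching the index sets $\sigma^+[x_i^\alpha]$, $\sigma^+[x_i^1]$, $\sigma^+[x_i^2]$), and handle the cross-interaction term via the triangle inequality. Your phrasing of $C(x,y)$ as a conic combination of convex functions is a slight stylistic variant of the paper's direct use of the triangle inequality, but the argument is the same.
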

\begin{proof}
Take $Z^1=(x^1,y^1),\ Z^2=(x^2,y^2)\in\RN\times\RN$ and a convex combination between them $Z^\alpha=\alpha Z^1+(1-\alpha)Z^2=(\alpha x^1+(1-\alpha)x^2, \alpha y^1+(1-\alpha)y^2)=(x^\alpha,y^\alpha)$, with $\alpha\in[0,1]$. We need to prove
$$
\mF[\alpha X^1+(1-\alpha)X^2]\le\alpha\mF[X^1]+(1-\alpha)\mF[X^2].
$$
If either $Z^1\not\in \mathcal{C}^N\times \mathcal{C}^N$ or $Z^2\not\in \mathcal{C}^N\times \mathcal{C}^N$ then the above inequality is trivial since the right-hand side is $+\infty$. When both $Z^1$ and $Z^2$ are in $\mathcal{C}^N\times \mathcal{C}^N$, then so is $Z^\alpha$ as this set is convex. Hence, the convexity of $\mF$ can be checked, as follows, by means of the order-preserving property in $\mathcal{C}^N\times \mathcal{C}^N$, 
\begin{align*}
    \mF[Z^\alpha]&=-\sum_{i=1}^N\sum_{j\in \sigma^+[x_i^\alpha]} m_i m_j(x_j^\alpha-x_i^\alpha) - \sum_{i=1}^N\sum_{j\in \sigma^+[y_i^\alpha]}n_i n_j(y_j^\alpha-y_i^\alpha)+\sum_{i,j}m_in_j|x_i^\alpha-y_j^\alpha|\\
    &=-\alpha\sum_{i=1}^N\sum_{j\in\sigma^+[x_i^1]}m_i m_j(x_j^1-x_i^1)-(1-\alpha)\sum_{i=1}^N\sum_{j\in \sigma^+[x_i^2]}m_i m_j(x_j^2-x_i^2) \\
    &\quad -\alpha\sum_{i=1}^N\sum_{j\in\sigma^+[y_i^1]}n_i n_j(y_j^1-y_i^1)-(1-\alpha)\sum_{i=1}^N\sum_{j\in\sigma^+[y_i^2]} n_i n_j(y_j^2-y_i^2)\\
    &\quad +\sum_{i,j} m_in_j|\alpha (x_i^1-y_j^1)+(1-\alpha)(x_i^2-y_j^2)|,
\end{align*}

and the assertion follows by using the triangle inequality in the last term.
\end{proof}

We shall now investigate in greater detail the functional $\mF$. Our next goal is to provide an expression of $\mF[Z]$ for $Z\in \mathcal{C}^N\times \mathcal{C}^N$ accounting for a possible superposition of groups of particles.

We now rewrite the functional $\mF[Z]$ using the above index notation and Remark \ref{rem:equivalent-expressions-self}. Let us start with the self-interaction part:
\begin{align}
    \label{eq:functional_deeper_Self_x}
    \begin{split}
    S(x) &=-\sum_{i=1}^N\sum_{j:x_i>x_j}m_i m_j (x_i - x_j) \\
    & = \sum_{i=1}^N m_i x_i \left[ -\sum_{j\in \sigma^-[x_i]}m_j + \sum_{j\in \sigma^+[x_i]}m_j\right].
    \end{split}
\end{align}
A similar expression may be obtained for the $y$-part:
\begin{equation}\label{eq:functional_deeper_Self_y}
    S(y)=\sum_{j=1}^N n_j y_j \left[- \sum_{i\in \sigma^-[y_j]}n_i + \sum_{i\in \sigma^+[y_j]}n_i\right].
\end{equation}
We now consider the cross-interaction term
\begin{align}
    & C(x,y)=\sum\sum_{x_i>y_j}m_i n_j(x_i-y_j) + \sum\sum_{x_i<y_j}m_i n_j(y_j-x_i)\nonumber\\
    & \ = \sum_{i=1}^N m_i x_i\left[\sum_{j\in \gamma^-[x_i]} n_j -\sum_{j\in \gamma^+[x_i]}n_j\right]
    + \sum_{j=1}^N n_j y_j \left[  \sum_{i\in \gamma^-[y_j]} m_i-\sum_{i\in \gamma^+[y_j]} m_i\right]. \label{eq:functional_deeper_Cross}
\end{align}
In order to deal with gradient flows in Hilbert spaces, we need to introduce the concept of \textit{Fr\'echet sub-differential}. We adapt the definition of this classical concept to our specific case.
\begin{defn}[Fr\'echet sub-differential]\label{def:frechet}
For a given proper, convex, and lower semi-continuous functional  $\mF$ on $\RN\times\RN$, we say that $P\in\RN\times\RN$ belongs to the sub-differential of $\mF$ at $Z\in \R^{N}\times\R^{N}$ if and only if
\begin{equation}\label{eq:subdiff1}
\mF[Z']-\mF[Z]\ge\langle P,Z'-Z\rangle_w,
\end{equation}
for all $Z'\in \R^N\times\R^N$. The sub-differential of $\mF$ at $Z$ is denoted by $\partial \mF(Z)$, and if $\partial \mF(Z)\neq \emptyset$ then we denote by $\partial^0 \mF(Z)$ the element of minimal (weighted) norm of $\partial \mF(Z)$.
\end{defn}

\begin{rem}\label{rem:subdiff_littleo}
We recall that, since $\mF$ is convex, requiring condition \eqref{eq:subdiff1} to be satisfied for all $Z'\in \R^N\times\R^N$ can be relaxed to 
\begin{equation}\label{eq:subdiff2}
    \mF[Z']-\mF[Z]\ge\langle P,Z'-Z\rangle_w + o(\|Z'-Z\|), \qquad \hbox{as $Z'\rightarrow Z$}.
\end{equation}
\end{rem}

As $\mF[Z]$ attains the  value $+\infty$ outside the cone $\mathcal{C}^N\times \mathcal{C}^N$, it is reasonable to assume $Z\in\mathcal{C}^N\times \mathcal{C}^N$ as a necessary condition to have  $\partial\mF(Z)\neq\emptyset$. In the one species case (see \cite[Proposition 2.10]{BCDFP}) one can actually prove that being in the cone is a necessary \emph{and sufficient} condition to have a non-empty sub-differential. Such a property is non-trivial in the many species case. We provide it in the next lemma.

\begin{lem}\label{lem:subdiff_empty}
Let $Z=(x,y)\in\RN\times\RN$. Then $\partial\mF(Z)\neq\emptyset$ if and only if $Z\in \mathcal{C}^N\times \mathcal{C}^N$. 
\end{lem}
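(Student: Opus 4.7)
The ``only if'' direction is immediate: if $Z \notin \mathcal{C}^N \times \mathcal{C}^N$ then $\mF[Z] = +\infty$, so $\partial\mF(Z) = \emptyset$ by the usual convention for subdifferentials of extended-real convex functionals.

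For the ``if'' direction, my plan is to reduce the question to a standard convex-analysis fact by exploiting that the self-interaction part of $\mF$ is linear \emph{on the cone}. Indeed, using the cone ordering one rewrites
\[
S(x) \;=\; -\sum_{i<j} m_i m_j (x_j - x_i) \;=\; \sum_{i=1}^N m_i x_i \left( \sum_{j>i} m_j - \sum_{j<i} m_j \right) \;=:\; \tilde S(x),
\]
and the right-hand side is a globally linear functional on $\RN$ that agrees with $S$ on $\mathcal{C}^N$; the analogous identity holds for $S(y)$. Setting $\tilde\Phi[Z] := \tilde S(x) + \tilde S(y) + C(x,y)$ therefore produces a convex, everywhere finite (hence continuous) functional on $\RN \times \RN$ which coincides with $\mF$ on $\mathcal{C}^N \times \mathcal{C}^N$. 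Standard convex analysis ensures $\partial\tilde\Phi(Z) \neq \emptyset$ for every $Z \in \RN \times \RN$. I would then observe that any $P \in \partial\tilde\Phi(Z)$ automatically belongs to $\partial\mF(Z)$: for $Z' \in \mathcal{C}^N \times \mathcal{C}^N$ the subdifferential inequality is inherited from $\tilde\Phi$, and for $Z' \notin \mathcal{C}^N \times \mathcal{C}^N$ it is trivial since $\mF[Z'] = +\infty$.

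To produce a formula that is usable in subsequent sections I would also exhibit a concrete candidate, e.g.
\[
p^x_i \;=\; \sum_{j>i} m_j \;-\; \sum_{j<i} m_j \;+\; \sum_{j \in \gamma^-[x_i]} n_j \;-\; \sum_{j \in \gamma^+[x_i]} n_j,
\]
together with the symmetric expression for $p^y_j$, and verify the defining inequality by direct computation. The intra-species contributions cancel thanks to $\tilde S = S$ on the cone, via a discrete Fubini rearrangement as in Remark \ref{rem:DiscreteFubini}, while the cross contributions reduce pair by pair to the elementary scalar inequality $|b| - |a| \geq \sign(a)(b-a)$ (with the convention $\sign(0) \in [-1,1]$). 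The main delicacy is the handling of overlapping particles: at an intra-species cluster $\sigma[x_i]$ the naive derivative of $|x_i - x_j|$ is ambiguous, but the linearization through $\tilde S$ resolves this automatically since the index ordering selects a sign convention compatible with the cone; at a cross-collision $\gamma[x_i] \neq \emptyset$ the pair inequality $|x'_i - y'_j| \geq 0 = |x_i - y_j|$ is automatic, so any choice in $[-1,1]$ for the scalar subdifferential of $|\cdot|$ at the origin produces a valid $P$. The index bookkeeping of Lemmas \ref{lem:PropertiesIndexNotationI}--\ref{lem:PropertiesIndexNotationII} is what one would invoke for the direct verification.
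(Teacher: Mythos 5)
Your proposal is correct, and the ``if'' direction takes a genuinely different and cleaner route than the paper's own argument (the ``only if'' direction is essentially the same). The paper fixes the candidate $P$ from the outset and verifies the subdifferential inequality directly by expanding $S(x')-S(x)$ and $C(x',y')-C(x,y)$; the self-interaction difference comes out exactly linear, while the cross-interaction difference produces a remainder $\widetilde{R}$ that must then be shown nonnegative --- a computation relying on the index-bookkeeping Lemmas \ref{lem:PropertiesIndexNotationI} and \ref{lem:PropertiesIndexNotationII}. You instead introduce the auxiliary functional $\tilde\Phi := \tilde S(x)+\tilde S(y)+C(x,y)$, where $\tilde S$ is the global linear extension of $S$ off the cone. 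Since $\tilde\Phi$ is convex and finite on all of $\RN\times\RN$, it is locally Lipschitz, so $\partial\tilde\Phi(Z)\neq\emptyset$ everywhere; and $\partial\tilde\Phi(Z)\subset\partial\mF(Z)$ for $Z$ in the cone because $\tilde\Phi=\mF$ on $\mathcal{C}^N\times\mathcal{C}^N$ while $\mF=+\infty$ off it. This reduces the existence question to a two-line appeal to standard convex analysis and renders the index-set machinery optional. For the concrete candidate, your observation that the cross contribution to $P$ can be checked pair-by-pair via the scalar inequality $|b|-|a|\geq s(b-a)$ for $s\in\partial|\cdot|(a)$, weighted by $m_i n_j$ and summed, again avoids tracking how the sets $\gamma^\pm[\cdot]$ shift under perturbations: the convexity of each $|x_i-y_j|$ does the work individually. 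What the paper's direct computation buys in return is a self-contained verification and the explicit nonnegative remainder $\widetilde{R}$, which makes visible where convexity is gained under small perturbations of overlapping configurations. Both routes produce the same $P$, with the cross part $\sum_{j\in\gamma^-[x_i]}n_j-\sum_{j\in\gamma^+[x_i]}n_j$ corresponding to the choice $\sign(0)=0$, which, as you correctly note, is not forced --- any selection in $[-1,1]$ at overlaps yields a valid subgradient.
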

\begin{proof}
Similarly to \cite[Proposition 2.10]{BCDFP}, let us assume $Z=(x,y)\not\in \mathcal{C}^N\times \mathcal{C}^N$. Without restriction, we assume for instance $x\not\in \mathcal{C}^N$, which implies $I_{\mathcal{C}^N}(x)=+\infty$. Assuming $P\in\partial\mF[Z]$, we have
$$
    \mF[Z']-S(x)-S(y)-C(x,y)\ge\langle P,Z'-Z\rangle_w+I_{\mathcal{C}^N}(x),
$$
for all $Z'=(x',y')\in\R^{2N}$. In particular, the previous inequality would hold for any  $Z'\in \mathcal{C}^N\times \mathcal{C}^N$, which is clearly a contradiction since, in this case, the left-hand side is finite, while the right-hand side is infinite. 

Let us now assume that $Z\in \mathcal{C}^N\times \mathcal{C}^N$. The inequality \eqref{eq:subdiff1} is trivially satisfied for an arbitrary $P$ in case $Z'\not\in \mathcal{C}^N\times \mathcal{C}^N$, therefore we can assume without restriction that $Z'=(x',y')\in \mathcal{C}^N\times \mathcal{C}^N$. Our goal is to show that there exists a vector $P\in \R^N\times \R^N$ such that \eqref{eq:subdiff2} holds as $Z'\rightarrow Z$. Therefore, without restriction we assume that $\|Z'-Z\|<\varepsilon_0$ for some $\varepsilon_0>0$ to be chosen later on.

We now compute
\begin{align*}
    & S(x')-S(x) \\
    & \ = \sum_{i=1}^N m_i \left[x'_i\left(-\sum_{j\in \sigma^-[x'_i]}m_j + \sum_{j\in \sigma^+[x'_i]}m_j  \right) - x_i \left(-\sum_{j\in \sigma^-[x_i]}m_j + \sum_{j\in \sigma^+[x_i]}m_j \right)\right]\\
    & \ = \sum_{i=1}^N m_i (x'_i-x_i)\left(-\sum_{j<i}m_j+\sum_{j>i} m_j  \right) + R,
\end{align*}
with
\begin{align}
     R 
    &= \sum_{i=1}^N m_i x'_i \left( \sum_{j<i\,:\,\,j\in \sigma[x'_i]}m_j - \sum_{j> i\,:\,\,j\in \sigma[x_i']}m_j\right)\\
    & \qquad + \sum_{i=1}^N m_i x_i \left(-\sum_{j<i\,:\,\, j\in \sigma[x_i]}m_j + \sum_{j> i\,:\,\, j\in \sigma[x_i]}m_j\right)\\
    &=: R_1 - R_2.
\end{align}
With 
\begin{align}
    R_1 = \sum_{i=1}^N  \sum\limits_{\substack{j<i \\ j \in \sigma[x_i']}} m_j m_i x_i' - \sum_{i=1}^N \sum\limits_{\substack{j<i \\ j \in \sigma[x_i]}}m_j  m_i x_i, 
\end{align}
and
\begin{align}
    R_2 = \sum_{i=1}^N  \sum\limits_{\substack{j>i \\ j \in \sigma[x_i']}} m_j m_i x_i' - \sum_{i=1}^N \sum\limits_{\substack{j>i \\ j \in \sigma[x_i]}}m_j  m_i x_i.
\end{align}
Using the fact that 
$$
    \sigma[x_i] = (\sigma[x_i] \cap \sigma[x_i']) \,\dot \cup\, (\sigma[x_i] \setminus\sigma[x_i']),
$$
we may split the second sum and simplify the term $R_1$, i.e.,
\begin{align}
    R_1 &= \sum_{i=1}^N  \sum\limits_{\substack{j<i \\ j \in \sigma[x_i']}}m_j m_i x_i' - \sum_{i=1}^N \sum\limits_{\substack{j<i \\ j \in \sigma[x_i]\cap \sigma[x_i']}}m_j  m_i x_i - \sum_{i=1}^N  \sum\limits_{\substack{j<i \\ j \in \sigma[x_i]\setminus \sigma[x_i']}}m_j m_i x_i \\
    &= \sum_{i=1}^N m_i (x_i'-x_i) \sum\limits_{\substack{j<i \\ j \in \sigma[x_i']}}m_j - \sum_{i=1}^N m_ix_i \sum\limits_{\substack{j<i \\ j \in \sigma[x_i]\setminus \sigma[x_i']}}m_j, 
\end{align}
having used Eq. \eqref{eq:inclusion_for_sigma} of Lemma \ref{lem:PropertiesIndexNotationI} in the last line. In the same vein, we have
\begin{align}
    R_2 &= \sum_{i=1}^N \sum\limits_{\substack{j>i \\ j \in \sigma[x_i']}}m_j  m_i x_i' - \sum_{i=1}^N  \sum\limits_{\substack{j>i \\ j \in \sigma[x_i]\cap \sigma[x_i']}} m_j m_i x_i - \sum_{i=1}^N  \sum\limits_{\substack{j>i \\ j \in \sigma[x_i]\setminus \sigma[x_i']}}m_j m_i x_i\\
    &= \sum_{i=1}^N m_i (x_i'-x_i) \sum\limits_{\substack{j>i \\ j \in \sigma[x_i']}}m_j - \sum_{i=1}^N m_i x_i \sum\limits_{\substack{j>i \\ j \in \sigma[x_i]\setminus \sigma[x_i']}}m_j.
\end{align}
Upon subtraction, we obtain
\begin{align}
    \label{eq:difference_Rone_Rtwo}
    R_1-R_2 &= \sum_{i=1}^N m_i (x_i'-x_i) \left( \sum\limits_{\substack{j<i \\ j \in \sigma[x_i']}}m_j -\sum\limits_{\substack{j>i \\ j \in \sigma[x_i']}}m_j\right)+ R_3,
\end{align}
where
\begin{align}
    \label{eq:Rthree}
    R_3 = \sum_{i=1}^N \sum\limits_{\substack{j>i \\ j \in \sigma[x_i]\setminus \sigma[x_i']}}m_j  m_i x_i - \sum_{i=1}^N \sum\limits_{\substack{j<i \\ j \in \sigma[x_i]\setminus \sigma[x_i']}}m_j m_i x_i.
\end{align}
Rearranging the sum according to Remark \ref{rem:DiscreteFubini}, the first term can be rewritten,
\begin{align}
    \sum_{i=1}^N \sum\limits_{\substack{j>i}} \chi_{ \sigma[x_i]\setminus \sigma[x_i']}(j) m_j  m_i x_i = \sum_{j=1}^N \sum_{i<j} \chi_{ \sigma[x_i]\setminus \sigma[x_i']}(j) m_j  m_i x_i.
\end{align}
Since 
$$
    j \in \sigma[x_i]\setminus \sigma[x_i'] \Longleftrightarrow i \in \sigma[x_j]\setminus \sigma[x_j'],
$$
we can simplify the expression further by relabelling, i.e., switching the roles of $i$ and $j$, to obtain
\begin{align}
    \sum_{i=1}^N \sum\limits_{\substack{j<i}} \chi_{ \sigma[x_i]\setminus \sigma[x_i']}(j) m_j  m_i x_i &= \sum_{i=1}^N \sum_{j>i} \chi_{ \sigma[x_i]\setminus \sigma[x_i']}(j) m_i  m_j x_j \\
    &= \sum_{i=1}^N \sum\limits_{\substack{j<i\\ j \in \sigma[x_i]\setminus \sigma[x_i']}} m_i  m_j x_j.
\end{align}
Substituting the simplified expression into the first term of $R_3$, i.e., Eq. \eqref{eq:Rthree}, we obtain
\begin{align}
    R_3 = \sum_{i=1}^N \sum\limits_{\substack{j<i\\ j\in \sigma[x_i]\sigma[x_i']}} m_i m_j (x_j-x_i) = 0.
\end{align}
Thus, revisiting Eq. \eqref{eq:difference_Rone_Rtwo}, we get
\begin{align}
    R_1-R_2 &= \sum_{i=1}^N m_i (x_i'-x_i) \left( \sum\limits_{\substack{j<i \\ j \in \sigma[x_i']}}m_j -\sum\limits_{\substack{j>i \\ j \in \sigma[x_i']}}m_j\right).
\end{align}
The right-hand side is shown to vanish changing the labels $i,j$ and using Remark \ref{rem:DiscreteFubini}.
We have therefore proven
\begin{subequations}
\label{eq:subdiff_self}
\begin{equation}
        S(x')-S(x) = \sum_{i=1}^N m_i (x'_i-x_i)\left(\sum_{j>i}m_j-\sum_{j<i}m_j\right),
\end{equation}
as well as
\begin{equation}
        S(y')-S(y) = \sum_{j=1}^N n_j (y'_j-y_j)\left(\sum_{i>j}n_i-\sum_{i<j}n_j\right).
    \end{equation}
\end{subequations}
Due to \eqref{eq:functional_deeper_Cross}, we have
\begin{align*}
    & C(x',y')-C(x,y)\\
    & \ =\sum_{i=1}^N m_i (x'_i-x_i)\left(\sum_{j\in \gamma^-[x_i]}n_j -\sum_{j\in \gamma^+[x_i]}n_j\right) + \sum_{j=1}^N n_j (y'_j-y_j)\left(\sum_{i\in \gamma^-[y_j]}m_i -\sum_{i\in \gamma^+[y_j]}m_i\right) + \widetilde{R},
\end{align*}

with
\begin{align}
    \widetilde R = \widetilde R_1 + \widetilde R_2 + \widetilde R_3 + \widetilde R_4,
\end{align}
where 
\begin{align}
    \widetilde R_1 = \sum_{i=1}^N m_i x_i' \left(\sum_{j \in \gamma^-[x_i']} n_j - \sum_{j\in \gamma^-[x_i]}n_j \right), \quad \mbox{and} \quad \widetilde R_2 = \sum_{i=1}^N m_i x_i' \left(\sum_{j \in \gamma^+[x_i]} n_j - \sum_{j\in \gamma^+[x_i']}n_j \right)
\end{align}
as well as
\begin{align}
    \widetilde R_3 = \sum_{j=1}^N n_j y_j' \left( \sum_{i \in \gamma^-[y_j']}m_i - \sum_{i\in \gamma^-[y_j]}m_i\right), \quad\mbox{and}\quad \widetilde R_4 = \sum_{j = 1}^N n_j y_j'\left(\sum_{i\in \gamma^+[y_j]} m_i - \sum_{i \in \gamma^+[y_j']}m_i\right).
\end{align}
Using Eq. \eqref{eq:prop_gamma_plusminus} of Lemma \ref{lem:PropertiesIndexNotationII}, we may write
\begin{align}
    \widetilde R_1&= \sum_{i=1}^N m_i x_i' \sum_{j \in \gamma[x_i] \cap \gamma^-[x_i']} n_j  \\
    &\geq \sum_{i=1}^N \sum_{j =1}^N \chi_{\gamma[x_i] \cap \gamma^-[x_i']}(j) m_i n_j y_j'\\
    &= \sum_{j=1}^N \sum_{i =1}^N \chi_{\gamma[y_j] \cap \gamma^+[y_j']}(j) m_i n_j y_j'\\
    &= \sum_{j=1}^N \sum_{i \in \gamma[y_j] \cap \gamma^+[y_j']}^N m_i n_j y_j',
\end{align}
where the inequality is due to the fact that $j \in \gamma^-[x_i']$ and thus $x_i' > y_j'$ and the penultimate line is by rearranging terms in the sum since
\begin{align}
    j \in \gamma[x_i] \cap \gamma^-[x_i] \Leftrightarrow \left(x_i = y_j \mbox{ and } x_i' > y_j'\right) \Leftrightarrow i \in \gamma[y_j] \cap \gamma^+[y_j'].
\end{align}
Using a similar argument, we see that 
\begin{align}
    \widetilde R_3 = \sum_{j=1}^N  \sum_{i\in \gamma[y_j]\cap \gamma^-[y_j']} m_i n_j y_j' \geq  \sum_{i=1}^N \sum_{j\in \gamma[y_i]\cap \gamma^+[x_i']} m_i  n_j x_i'.
\end{align}
Finally, we note that, upon using Eq. \eqref{eq:prop_gamma_plusminus} of Lemma \ref{lem:PropertiesIndexNotationII}, we may write
\begin{align}
    \widetilde R_2 =- \sum_{i=1}^N \sum_{j\in \gamma[x_i] \cap \gamma^+[x_i']} m_i n_j x_i', \quad\mbox{and}\quad \widetilde R_4 = -\sum_{j=1}^N \sum_{i\in \gamma[y_j]\cap \gamma^+[y_j]} m_i  n_j y_j'.
\end{align}
Combining the terms, we have $\widetilde R\geq  0$.
The above estimate, together with \eqref{eq:subdiff_self}, implies that the vector $P=(p,q)\in \R^N\times \R^N$ with
\begin{align*}
    & p=(p_i)_{i=1}^N\,,\qquad q=(q_j)_{j=1}^N,\\
    & p_i = -\sum_{j<i}m_j + \sum_{j>i}m_j + \sum_{j\in \gamma^-[x_i]}n_j - \sum_{j\in \gamma^+[x_i]}n_j,\\
    & q_j =-\sum_{i<j}n_i + \sum_{i>j}n_i + \sum_{i\in \gamma^-[y_j]}m_i - \sum_{i\in \gamma^+[y_j]}m_i,
\end{align*}
satisfies \eqref{eq:subdiff2}, i.e., $P\in \partial\mathcal{F}(Z)$.
\end{proof}

The functional $\mF$ defined in \eqref{eq:discrete-inter-en-funct} is proper, continuous, and convex on the Hilbert space $(\R^N\times\R^N,\langle\cdot,\cdot\rangle_w)$, in view of Lemma \ref{lem:convexity-functional}. As a consequence of the previous properties we have $\partial\mF$ is a maximal monotone operator. Hence, we can use the theory of Br\'{e}zis \cite[Theorem 3.1]{Brezis}, e.g. in the form stated in \cite[Section 9.6, Theorem 3]{evans} in order to pose system \eqref{eq:ode-sys-newtonian} as the gradient flow associated to \eqref{eq:discrete-inter-en-funct}. 

\begin{defn}
\label{def:discrete_gradflow}
Let $Z_0=(x_0,y_0)\in \mathcal{C}^N\times\mathcal{C}^N$. An absolutely continuous curve $(x(t),y(t))\in \R^{2N}$ is a \emph{gradient flow} for the functional $\mF$ if $Z(t):=(x(t),y(t))$ is a Lipschitz function on $[0,+\infty)$, i.e., $\frac{dZ}{dt}\in L^\infty([0,+\infty);\RN\times\RN)$ (in the sense of distributions) and if it satisfies the sub-differential inclusion
	\begin{equation}\label{eq:subdiff_inclusion}
	-\begin{pmatrix}
\dot x(t)\\
\dot y(t)
	\end{pmatrix}\in\partial\mF(Z(t)),
	\end{equation}
for almost every $t\in [0,+\infty)$ with $(x(0),y(0))=(x_0(\cdot),y_0(\cdot))$.
\end{defn}
Resorting to the theory of Br\'ezis, we get the following theorem.
\begin{thm}\label{thm:discrete_gradientflow}
Let $Z_0=(x_0,y_0)\in \mathcal{C}^N\times \mathcal{C}^N$ be an initial datum. Then, there exists a unique solution $Z(t)=(x(t),y(t))$ in the sense of Definition \ref{def:discrete_gradflow} such that $Z(0)=Z_0$. Moreover, given $Z^1_0, Z^2_0 \in \mathcal{C}^N\times \mathcal{C}^N$ and the two corresponding solutions $Z^1(t), Z^2(t)$ in the sense of Definition \ref{def:discrete_gradflow} with initial data $Z^1_0$ and $Z^2_0$ respectively, the stability property
\begin{equation*}
    \|Z^1(t)-Z^2(t)\| \leq \|Z^1_0-Z^2_0\|,
\end{equation*}
holds for all $t\geq 0$.
\end{thm}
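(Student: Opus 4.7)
The plan is to apply the classical theory of gradient flows for maximal monotone operators in Hilbert spaces (Brezis' theorem, as already cited) to the operator $\partial\mF$ on the Hilbert space $(\RN\times\RN,\langle\cdot,\cdot\rangle_w)$, and then derive the contraction estimate from monotonicity.

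First I would verify the hypotheses of Brezis' theorem. The functional $\mF$ is convex by Lemma \ref{lem:convexity-functional} and proper since $\mF[Z]\leq |x_N|+|y_N|<+\infty$ for every $Z\in\mathcal{C}^N\times\mathcal{C}^N$, which is non-empty. Lower semi-continuity is immediate: the smooth part $S(x)+S(y)+C(x,y)$ is continuous on $\RN\times\RN$, while the two indicator functions $\mathcal{I}_{\mathcal{C}^N}$ are l.s.c.\ because $\mathcal{C}^N$ is a closed convex cone. Being a proper, convex, and lower semi-continuous functional on a Hilbert space, $\mF$ has a maximal monotone sub-differential $\partial\mF$, and by Lemma \ref{lem:subdiff_empty} its domain is exactly $D(\partial\mF)=\mathcal{C}^N\times\mathcal{C}^N$, which coincides with $\overline{D(\mF)}$.

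Next I would directly invoke Brezis' theorem: for every $Z_0\in \mathcal{C}^N\times\mathcal{C}^N$ it furnishes a unique Lipschitz curve $Z:[0,+\infty)\to\RN\times\RN$ with $Z(0)=Z_0$ satisfying the sub-differential inclusion \eqref{eq:subdiff_inclusion} for a.e.\ $t\geq 0$, together with $Z(t)\in D(\partial\mF)=\mathcal{C}^N\times\mathcal{C}^N$ for every $t\geq 0$. This produces existence, uniqueness, and the invariance of the cone under the flow.

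For the stability estimate I would use the monotonicity of $\partial\mF$. Given two solutions $Z^1(t),Z^2(t)$ with initial data $Z^1_0,Z^2_0$, set $P^i(t):=-\dot Z^i(t)\in\partial\mF(Z^i(t))$. By convexity of $\mF$ (Lemma \ref{lem:convexity-functional}) the sub-differential is monotone, i.e.
$$\langle P^1(t)-P^2(t),\,Z^1(t)-Z^2(t)\rangle_w\geq 0,\qquad \text{for a.e. } t\geq 0.$$
Therefore
$$\frac{d}{dt}\tfrac{1}{2}\|Z^1(t)-Z^2(t)\|^2=\langle \dot Z^1(t)-\dot Z^2(t),\,Z^1(t)-Z^2(t)\rangle_w\leq 0,$$
and integration from $0$ to $t$ yields $\|Z^1(t)-Z^2(t)\|\leq \|Z^1_0-Z^2_0\|$.

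The main obstacle is essentially bookkeeping: since the theorem is an application of abstract semigroup theory, the real work has already been done in establishing convexity (Lemma \ref{lem:convexity-functional}) and characterising the sub-differential (Lemma \ref{lem:subdiff_empty}). The only delicate points are verifying that the weighted Hilbert structure $\langle\cdot,\cdot\rangle_w$ (which has equivalent norm to the Euclidean one because the masses $m_i,n_j$ are strictly positive) does not alter the applicability of Brezis' result, and observing that the abstract inclusion \eqref{eq:subdiff_inclusion} is what Definition \ref{def:discrete_gradflow} requires. Identifying this gradient flow with the explicit ODE system \eqref{eq:ode-sys-newtonian} at configurations containing clusters, where the right-hand side of \eqref{eq:ode-sys-newtonian} is discontinuous, is a separate and more delicate task that is naturally postponed to Section \ref{sec:properties}.
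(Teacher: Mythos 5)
Your proposal is correct and follows essentially the same route as the paper: verify that $\mF$ is proper, convex, and lower semi-continuous on the weighted Hilbert space, note that $\partial\mF$ is then maximal monotone with domain $\mathcal{C}^N\times\mathcal{C}^N$ (Lemma \ref{lem:subdiff_empty}), and invoke Brezis' theorem for existence, uniqueness, and the contraction property. Your explicit derivation of the stability estimate from monotonicity of the sub-differential is a standard part of the same theory and matches the paper's intent.
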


\begin{rem}
Lemma \ref{lem:subdiff_empty} affects the statement of the above Theorem \ref{thm:discrete_gradientflow} in that the class of initial conditions for which existence and uniqueness of a gradient flow solution holds in the sense of Definition \ref{def:discrete_gradflow} coincides with \emph{the whole convex cone} $\mathcal{C}^N\times \mathcal{C}^N$. According to \cite[Theorem 3.1]{Brezis}, initial data should belong to the \emph{domain of the sub-differential} of $\mathcal{F}$ in order to have existence and uniqueness of solutions. Lemma \ref{lem:subdiff_empty} assures that $\mathcal{C}^N\times \mathcal{C}^N$ and the domain of $\partial\mathcal{F}$ are the same set.
Moreover, the result in Lemma \ref{lem:subdiff_empty} also provides an explicit expression $P(p,q)$ of at least one element in the sub-differential at any given configuration of particles that includes possible collisions, both within the same species and among particles of opposite species. Such expression anticipates what we shall see in the next section regarding the behavior of particles in presence of superpositions/collisions. For example, the $i$-th particle of the first species of a given particle configuration is subject to two \emph{self-repulsive} drifts, the former due to the accumulated mass of particles with label $<i$ (pointing to the positive direction), the latter combining the amount of mass possessed by particles with label $>i$ (pointing to the negative direction), \emph{regardless of possible superpositions}. At the same time, the $i$-th particle is subject to cross-attractive drifts depending on the particles of the opposite species. Particles of the $y$-species located strictly to the left of $x_i$ contribute to a cross-attractive drift pointing to the negative direction, whereas particles of the $y$ species posed strictly to the right of $x_i$ cause $x_i$ move in the positive direction. Particles of the $y$-species whose location coincides with $x_i$ do not contribute to the cross-interaction part of $P$.
\end{rem}

\begin{rem}
Note that the minimal selection in the sub-differential in \eqref{eq:subdiff_inclusion} is achieved as consequence of the convexity of $\mF$, cf. \cite{evans,CDFEFS}, for instance. More precisely, the quoted result in \cite{Brezis} implies $Z$ admits a right derivative for every $t\in[0,+\infty)$ and
$$
    -\frac{d^+Z}{dt}(t)=\partial^0\mF(Z(t)),
$$
for every $t\in[0,+\infty)$. Here,
\[
    \partial^0\mF(Z)=\argmin\left\{\|P\|\,:\,\, P\in \partial\mF(Z)\right\}.
\]
Moreover the function $t\mapsto\partial^0\mF(X(t))$ is right-continuous and the function $t\mapsto\left\|\partial^0\mF(Z(t))\right\|$ is non-increasing.
\end{rem}

\section{Qualitative Properties of the ODEs System}\label{sec:properties}
Having established a well-posedness theory for system \eqref{eq:ode-sys-newtonian}, let us now focus on some important properties of the solutions. In particular we are interested in the dynamic of collisions and in the support of the solution. 

The main results of this section are obtained under the assumption that all particles have the same mass, i.e., $1/N$. Similar results may be obtained for more general masses. We highlight this in Remark \ref{rem:different_masses}. Since the main goal of this work is the convergence of the particle approximation scheme, we shall henceforth focus on the case of equal masses.

\subsection{Collisions between particles of different species}

In this subsection we discuss collisions between any two particles $x_i$ and $y_j$ of opposite species, for $i,j\in\{1,2,...,N\}$. Let us denote by
\begin{equation}
    \label{eq:collision-time}
    t_*=\inf\{t\geq 0:x_i(t)=y_j(t)\ \mbox{for some}\ 1 \leq i,j\leq N\}.
\end{equation}
In Subsection \ref{subsec:collisions} we shall see that, indeed, $t_*<+\infty$. Since all trajectories are continuous and the number of particles is finite, there exists a time $t^*$ such that the above $\inf$ is achieved. Let $i_0,j_0 \in \{1,2,...,N\}$ such that $x_{i_0}(t_*) = y_{j_0}(t_*)$.  The following theorem covers all possible configurations of the colliding particles right after the collision time $t_*$. 

\begin{thm}\label{thm:collisions_different_species}

Let $t_*$ be a collision time defined in \eqref{eq:collision-time} and $i_0,j_0$ be such that  $x_{i_0}(t_*) = y_{j_0}(t_*)$. Assume that all other particles occupy a different position at $t=t_*$. Then there exists $\epsilon>0$ such that $\theta(t):=x_{i_0}(t) - y_{j_0}(t)$ satisfies 
\begin{enumerate}
    \setlength\itemsep{0.5em}
    \item $\theta(t) = x_{i_0}(t) - y_{j_0}(t) > 0, \quad\hbox{if}\quad i_0 > j_0$,
    \item $\theta(t) = x_{i_0}(t) - y_{j_0}(t) < 0, \quad\hbox{if}\quad i_0 < j_0$,
    \item $\theta(t) = x_{i_0}(t) - y_{j_0}(t) = 0, \quad\hbox{if}\quad i_0 = j_0$,
\end{enumerate}
for all $t \in (t_*, t_* + \epsilon)$. Moreover, in case $i_0=j_0$,
\begin{itemize}
\item the two particles $x_{i_0}$ and $y_{i_0}$ remain attached for all $t\geq t^*$,
\item the two particles $x_{i_0}$ and $y_{i_0}$ have zero velocity on $[t^*,t^*+\epsilon]$.
\end{itemize}
\end{thm}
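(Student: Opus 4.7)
My plan is to compute the right-derivative $\dot\theta(t_*^+) = \dot x_{i_0}(t_*^+) - \dot y_{j_0}(t_*^+)$ by exploiting the minimal-selection property of the gradient flow, $\dot Z(t^+) = -\partial^0\mathcal{F}(Z(t))$, together with an explicit parameterisation of $\partial\mathcal{F}(Z(t_*))$. By continuity of $Z$ and the hypothesis that all other particles occupy distinct positions at $t_*$, I first fix $\epsilon>0$ small enough that no additional collisions occur on $[t_*,t_*+\epsilon]$ and the relative ordering of the remaining particles is unchanged throughout this interval (so Lemma \ref{lem:PropertiesIndexNotationII} applies).

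At the collision configuration $Z(t_*)$ the only source of non-smoothness in $\mathcal{F}$ is the single term $m_{i_0}n_{j_0}|x_{i_0}-y_{j_0}|$. Repeating the computation in the proof of Lemma \ref{lem:subdiff_empty} but retaining $\sign(x_{i_0}-y_{j_0})=\alpha\in[-1,1]$ as a free parameter yields a one-parameter family $\{P^\alpha\}\subset \partial\mathcal{F}(Z(t_*))$ with
\[
p_{i_0}^\alpha = p_{i_0}^0 + n_{j_0}\alpha,\qquad q_{j_0}^\alpha = q_{j_0}^0 - m_{i_0}\alpha,
\]
and all other components frozen. Under the equal-mass hypothesis $m_i = n_j = 1/N$, the index counting gives $p_{i_0}^0 = 2(j_0-i_0)/N$ and $q_{j_0}^0 = -p_{i_0}^0$, so the $\alpha$-dependent part of $\|P^\alpha\|^2$ collapses to $(2/N)(p_{i_0}^0+\alpha/N)^2$, whose unconstrained minimiser is $\alpha^\star = 2(i_0-j_0)$.

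Since $i_0,j_0$ are integers, $\alpha^\star\in[-1,1]$ iff $i_0=j_0$, giving the three cases of the theorem. When $i_0=j_0$ the minimum is attained at $\alpha^\star=0$ with $p_{i_0}=q_{j_0}=0$, so $\dot x_{i_0}(t_*^+)=\dot y_{j_0}(t_*^+)=0$ and, together with $\theta(t_*)=0$, $\theta\equiv 0$ on $[t_*,t_*+\epsilon]$, proving conclusion (3) and the zero-velocity sub-bullet (the same minimal-selection computation is valid at every $t\in[t_*,t_*+\epsilon]$ since the particle arrangement is unchanged there). When $i_0>j_0$ the minimiser is the boundary value $\alpha^\star=+1$ and substitution gives $\dot\theta(t_*^+) = (4(i_0-j_0)-2)/N>0$, so $\theta(t)>0$ on $(t_*,t_*+\epsilon)$, which is conclusion (1); the case $i_0<j_0$ is symmetric and yields (2).

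For the global \textbf{remain-attached} claim when $i_0=j_0$, I would exploit the cancellation afforded by equal masses: at any configuration in which $x_{i_0}=y_{j_0}$ sit at a common point $z$, the repulsion of $x_{i_0}$ and the attraction of $y_{j_0}$ cancel on every other particle (both contribute $\pm\tfrac{1}{N}\sign(\cdot-z)$ with opposite signs), and the $(i_0,j_0)$ components of the minimal selection remain zero by the same one-parameter minimisation as above. Consequently, the curve obtained by freezing the pair at $x_*$ while evolving the remaining $2(N-1)$ particles by their own gradient flow on $\mathcal{C}^{N-1}\times\mathcal{C}^{N-1}$ solves the full system; by the uniqueness part of Theorem \ref{thm:discrete_gradientflow} it \emph{is} the solution, so the pair remains attached for all $t\geq t_*$. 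I expect this last step to be the main delicacy, since one must verify that the cancellation survives any subsequent collision among the remaining particles — in particular whenever a new particle later reaches $x_*$ — by re-examining the sub-differential at each such event and rerunning the minimisation locally.
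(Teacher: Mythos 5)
Your proof is correct and takes a genuinely different route from the paper. The paper argues by consistency: it postulates in turn the three possible sign patterns of $\theta$ on $(t_*,t_*+\epsilon)$, computes the (now smooth, single-valued) velocities in each of the resulting ordered configurations, and checks that $\theta>0$ is self-consistent iff $i_0>j_0$, that $\theta<0$ is self-consistent iff $i_0<j_0$, leaving $\theta\equiv0$ by exclusion, which it then confirms via the explicit subdifferential element of Lemma~\ref{lem:subdiff_empty} ($\alpha=0$) and the uniqueness in Theorem~\ref{thm:discrete_gradientflow}. You instead invoke the minimal-selection principle directly: you parameterise $\partial\mF(Z(t_*))$ by the single free slope $\alpha\in[-1,1]$ attached to the only non-smooth term $m_{i_0}n_{j_0}|x_{i_0}-y_{j_0}|$, minimise $\|P^\alpha\|_w$ over $\alpha$, and read off $\dot Z(t_*^+)=-\partial^0\mF(Z(t_*))$; the closed-form $\alpha^\star=2(i_0-j_0)$ then encodes the same trichotomy. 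The algebra coincides with the paper's, but your route \emph{derives} the post-collision behaviour rather than verifying it, and it makes the strict integer threshold in $i_0-j_0$ transparent. Two small points to tighten: (i) you should record explicitly that $\{P^\alpha:\alpha\in[-1,1]\}$ is \emph{all} of $\partial\mF(Z(t_*))$ at this configuration — the self-interaction part and the indicator part are locally affine / singleton-valued there, so the subdifferential is exactly this segment — since otherwise the minimisation only bounds $\|\partial^0\mF\|$ from above; (ii) the parenthetical claim that ``the same minimal-selection computation is valid at every $t\in[t_*,t_*+\epsilon]$ since the particle arrangement is unchanged there'' is circular as a proof of $\theta\equiv0$; replace it either by the paper's candidate-trajectory-plus-uniqueness argument, or by the observation that, with $i_0=j_0$, any excursion $\theta>0$ would give the smooth velocity $\dot\theta=-2/N<0$ (and symmetrically for $\theta<0$), so $\theta$ cannot leave zero before the next external collision. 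Your acknowledgement that the global ``remain attached'' claim needs the cancellation to be re-verified at every later collision is well placed — the paper's own proof of Theorem~\ref{thm:collisions_different_species} only establishes the claim on $[t_*,t_*+\epsilon]$ and in effect defers the global statement to the cluster analysis of Lemma~\ref{lem:initial_cluster} and Theorem~\ref{thm:overlap}.
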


\begin{proof}
At time $t_*$ we consider a particle closest to $x_{i_0}(t_*) = y_{j_0}(t_*)$, denoted by $z_k$, where $z_k = x_i$, for $i\neq i_0$ or $z_k=y_j$, for $j\neq j_0$. Let us denote $d_* := |z_k(t_*) - x_{i_0}(t_*)|$. Since no particle is moving at a speed larger than $2$, cf. \eqref{eq:ode-sys-newtonian}, there exists an $0 < \epsilon < d_*/4$, such that their distance remains strictly positive, for all $t\in [t_*, t_* + \epsilon]$. In particular this means that $x_{i_0}, y_{j_0}$ remain the only particles in the interval $[x_{i_0}(t_*) - d_*/2, x_{i_0}(t_*) + d_*/2]$ for any $t \in [t_*, t_*+ \epsilon]$.
As a consequence, the sign of $\theta(t)$ is either strictly positive or strictly negative on $(t_*, t_*+ \epsilon)$, or it is equal to zero, since the velocities remain constant.

If $\theta(t)\neq 0$, no superpositions occur for the whole time interval $(t_*,t_*+\epsilon)$. As 
the functional $\mF$ is $C^1$ on a configuration without superpositions, the sub-differential of $\mF$ is single-valued and corresponds to the right-hand side of \eqref{eq:ode-sys-newtonian}. Therefore, for all $t_*^+\in (t_*,t_*+\epsilon)$ we have

\begin{equation}
    \begin{cases}
        \dot x_{i_0}(t_*^+)= \displaystyle \sum_{k\in \sigma^-[x_{i_0}]}\frac{1}{N} - \sum_{k\in \sigma^+[x_{i_0}]} \frac{1}{N} - \sum_{k\in \gamma^-[x_{i_0}]} \frac{1}{N} + \sum_{k\in \gamma^+[x_{i_0}]}\frac{1}{N},\\[0.7em]
        \dot y_{j_0}(t_*^+)= \displaystyle \sum_{k\in \sigma^-[y_{j_0}]}\frac{1}{N} - \sum_{k\in \sigma^+[y_{j_0}]}\frac{1}{N} - \sum_{k\in \gamma^-[y_{j_0}]}\frac{1}{N} + \sum_{k\in \gamma^+[y_{j_0}]}\frac{1}{N}.
    \end{cases}
\end{equation}
We now compare the velocities of the two particles $x_{i_0}$ and $y_{j_0}$ in the two configurations (1) and (2) with their velocity. In case $\theta>0$ on $(t*,t^*+\epsilon)$ we have, for some $t_*^+\in (t_*,t_*+\epsilon)$,
\begin{equation}\label{eq:first_case}
    \dot x_{i_0}(t_*^+)>\dot y_{j_0}(t_*^+),
\end{equation}
which is equivalent to 
\begin{align*}
    &\sum_{\{k:x_k<x_{i_0}\}}\frac{1}{N}-\sum_{\{k:x_k>x_{i_0}\}}\frac{1}{N}-\sum_{\{k:y_k<x_{i_0}\}}\frac{1}{N}+\sum_{\{k:y_k>x_{i_0}\}}\frac{1}{N}\\
    &\quad > \sum_{\{k:y_k<y_{j_0}\}}\frac{1}{N}-\sum_{\{k:y_k>y_{j_0}\}}\frac{1}{N}-\sum_{\{k:x_k<y_{j_0}\}}\frac{1}{N}+\sum_{\{k:x_k>y_{j_0}\}}\frac{1}{N}.
\end{align*}
After multiplying by $N$, the above condition reads
\begin{align*}
    (i_0-1) \,-\, (N-i_0) \,-\, j_0 \,+\, (N-j_0) &> (j_0-1) \,-\,  (N-j_0) \,-\, (i_0-1) \,+\, (N -i_0+1),
\end{align*}
which, upon simplification, is equivalent to
\begin{align*}
    i_0-j_0&>\frac{1}{2},
\end{align*}
which, in turn, is equivalent to $i_0>j_0$, due to the fact that $i_0,j_0\in\{1,...,N\}$.  A similar computation yields that in case $\theta(t)<0$ on the interval $(t_*,t_*+\epsilon)$ then
 $\dot x_{i_0}(t_*^+)<\dot y_{j_0}(t_*^+)$ on some time $t_*^+$, and then
$$
    i_0-1-(N-i_0)-(j_0-1)+N-j_0+1<j_0-1-(N-j_0)-i_0+(N-i_0)\iff i_0<j_0.
$$
Clearly, in case $i_0=j_0$ none of the two above situations are possible, and we must necessarily have that the two particles $x_{i_0}$ and $y_{j_0}$ overlap on the time interval $[t^*,t^*+\epsilon)$. In order to determine the speed of the two particles in this case, we observe that, in the particle configuration in which $x_{i_0}=y_{i_0}$ and all other particles occupy different positions, the $i_{0}$-th component of sub-differential $P=(p,q)$ found in Lemma \ref{lem:subdiff_empty} reads
\[p_{i_0}=q_{i_0}=(i_0-1)-(N-i_0) -(i_0-1)+(N-i_0) = 0,\]
and by uniqueness of the gradient flow solution according to Definition \ref{def:discrete_gradflow} the two particles have zero velocity on the time interval $[t^*,t^*+\epsilon]$ in which they do not collide with other particles. 
\end{proof}

\begin{rem}\label{rem:different_masses}
In case of different masses, the inequality in \eqref{eq:first_case} reads
\begin{align}
    M_{i_0-1}-(1-M_{i_0})-N_{j_0}+1-N_{j_0} > N_{j_0-1}-(1-N_{j_0})-M_{i_0-1}+(1-M_{i_0-1}),
\end{align}
which gives the more general condition  $m_{i_0}>4N_{j_0-1}+3n_{j_0}-4M_{i_0-1}$ for $x_{i_0}(t)>y_{j_0}(t)$. Here $M_{i_0}=m_1+...+m_{i_0}$ and  $N_{j_0}=n_1+...+n_{j_0}$. 
\end{rem}

\subsection{Collisions between particles of the same species} Theorem \ref{thm:collisions_different_species} covers all possible types of  collisions between two particles of opposing species. This subsection is dedicated to investigating whether or not two particles of the same species can collide.

\begin{thm}
\label{thm:collisions_same_species}
Assume the particles $x_1,\ldots,x_N$ do not overlap initially and that $m_1 = \dots = m_N = n_1 = \dots = n_N = 1/N$. Then, particles of the $x$-species never overlap for all times $t> 0$. The same statement holds for particles of the $y$-species.
\end{thm}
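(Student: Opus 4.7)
I argue by contradiction: suppose $t_c>0$ is the first time two particles of the same species coincide, and WLOG $x_i(t_c)=x_{i+1}(t_c)$ for some $i$ (the $y$-case is symmetric). Both species then remain strictly ordered on $[0,t_c)$, and the gap $D(t):=x_{i+1}(t)-x_i(t)$ is Lipschitz, positive on $[0,t_c)$, and satisfies $D(t_c)=0$. Writing out \eqref{eq:ode-sys-newtonian} for $\dot x_i$ and $\dot x_{i+1}$, and using that $x_i$ and $x_{i+1}$ are consecutive in the $x$-ordering so that every particle outside $(x_i(t),x_{i+1}(t))$ contributes equally to both velocities and cancels in $\dot D$, one gets, at times without coincidences,
\[\dot D(t)=\tfrac{2}{N}\bigl(1-\beta(t)\bigr),\qquad \beta(t):=\#\{k:\,y_k(t)\in(x_i(t),x_{i+1}(t))\},\]
since the mutual $x_i$--$x_{i+1}$ repulsion contributes $+2/N$ to $\dot D$ and each $y$-particle inside contributes $-2/N$. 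The vanishing of $D$ at $t_c$ forces $\beta(t)\geq 2$ on a subset of positive measure accumulating at $t_c^-$.

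Fix such a time $\bar t<t_c$ without coincidences. Let $y_{j_1}(\bar t)<\cdots<y_{j_\beta}(\bar t)$ be the $y$-particles in the gap (with consecutive labels $j_1,\ldots,j_\beta$ by the $y$-ordering) and set $V(t):=y_{j_\beta}(t)-y_{j_1}(t)$. The same cancellation applied to the two extremal inner $y$-particles — using now that no $x$-particle lies strictly between any two $y$'s inside the gap, since $x_i,x_{i+1}$ are consecutive in the $x$-ordering — yields $\dot V(t)=\tfrac{2(\beta-1)}{N}=-\dot D(t)$ on any interval where the configuration persists. Thus $D+V$ is constant on such an interval while $D-V=(y_{j_1}-x_i)+(x_{i+1}-y_{j_\beta})\geq 0$ shrinks at rate $\geq 4/N$, so in finite time we reach a different-species collision at which either $y_{j_1}=x_i$ or $y_{j_\beta}=x_{i+1}$. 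A direct velocity comparison at the moment of contact shows that the approach condition forces the index inequality $j_1\leq i$ (resp.\ $j_\beta\geq i+1$), so Theorem \ref{thm:collisions_different_species} applies and the event is either a \emph{crossing} (when $j_1<i$ or $j_\beta>i+1$; the offending $y$ exits the gap and $\beta$ decreases by one) or a \emph{sticking} (when $j_1=i$ or $j_\beta=i+1$; the colliding pair freezes at zero velocity).

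The proof closes by iterating the squeeze. Each event strictly decreases one of two nonnegative integer-valued quantities — the number of $y$'s remaining in the gap, or the number of still-moving endpoints among $\{x_i,x_{i+1}\}$ — so only finitely many events can occur on $[0,t_c)$. After the last event, either $\beta\leq 1$ persists (hence $\dot D\geq 0$) or both endpoints of the gap are frozen (hence $D$ is constant). In either case $D$ stays uniformly bounded below by a positive constant up to $t_c$, contradicting $D(t_c)=0$. The $y$-species statement follows by symmetry. The main obstacle is the bookkeeping in the iteration, because a sticking does not by itself freeze $D$: after $y_i$ sticks to $x_i$ the endpoint $x_{i+1}$ may still move, so one must recompute its velocity against the frozen endpoint, re-enter the squeeze argument with one fewer free particle, and verify termination before the last event in at most finitely many steps.
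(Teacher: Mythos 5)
Your proof takes a genuinely different route from the paper. The paper argues \emph{locally in time}: it assumes a first collision of $x_i,x_{i+1}$ at $t_*$, restricts to a short pre-collision window $(t_*-\epsilon,t_*)$ on which the configuration relative to $x_i,x_{i+1}$ is static, and does a finite case analysis ($0$ $y$'s between, $1$ $y$ between, $\geq 2$ $y$'s between, a $y$ attached to an endpoint, etc.), each case yielding a direct velocity comparison that contradicts $\dot x_{i+1}<\dot x_i$ since all masses are $1/N$. There is no iteration, no squeeze, and no global-in-time bookkeeping. Your argument instead tracks $D=x_{i+1}-x_i$ and the inner $y$-spread $V$ over the whole interval $[0,t_c)$, derives the conservation law $\frac{d}{dt}(D+V)=0$ with $D-V$ strictly decreasing when $\beta\geq 2$, and tries to terminate an iteration of crossing/sticking events. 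The identity $\dot D=\frac{2}{N}(1-\beta)$, the computation $\dot V=\frac{2(\beta-1)}{N}$, and the index comparison (approach forces $j_1\leq i$, resp.\ $j_\beta\geq i+1$) are all correct, and the squeeze idea is sound.

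However, your termination argument has a gap. You claim each event strictly decreases \say{the number of $y$'s remaining in the gap, or the number of still-moving endpoints}, but $\beta$ is \emph{not} monotone: a $y$-particle $y_k$ with $k>i$ sitting to the left of $x_i$ (or $y_k$ with $k\leq i$ sitting to the right of $x_{i+1}$) can cross into the gap — Theorem \ref{thm:collisions_different_species} permits this, and a velocity check confirms the approach condition can hold. Such an entry event increases $\beta$ and leaves the number of moving endpoints unchanged, so it decreases neither of your two monovariants. Finiteness of events still holds for a different reason: by the index inequalities from Theorem \ref{thm:collisions_different_species}, a given $y_k$ can enter the gap at most once and exit (or stick) at most once, so the total event count is $O(N)$; but that argument is not the one you gave. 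Your terminal-state dichotomy \say{$\beta\leq1$ persists hence $\dot D\geq 0$, or both endpoints frozen hence $D$ constant} is also not exhaustive: if $y_i$ has frozen $x_i$ and $y_{i+1}$ is alone in the gap, one computes $\dot D=\dot x_{i+1}=-1/N<0$, which is $\beta=1$ with $\dot D<0$. You do flag the frozen-endpoint complication yourself and say it must be reiterated with fewer free particles, but the accounting must be redone to cover $y$'s entering the gap and the mixed frozen/free cases; as stated the monovariant claim is false, so the finiteness step does not close. The paper's local-in-time case analysis sidesteps all of this at once by only ever looking at a single pre-collision window, which is why it is shorter and needs no termination argument.
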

\begin{proof}
Arguing by contradiction, let us assume there exists a time $t_*$ such that $x_i(t_*)=x_{i+1}(t_*)$. Without loss of generality we may choose such $t_*$ as the first collision time for those two particles. Still without losing generality, we assume there exists an $\epsilon>0$ such that no other collisions involving either $x_i$ or $x_{i+1}$ occur on $(t_*-\epsilon,t_*)$. Clearly, there exists $t_*^- \in (t_* - \epsilon, t_*)$ such that
\begin{equation}
    \label{eq:contr_inequality}
    \dot x_{i+1}(t_*^-)<\dot x_i(t_*^-).
\end{equation}
We shall cover all the possible cases.

\textbf{Case 1: $x_i$ and $x_{i+1}$ collide ``without any particles of the opposite species strictly between them''.} This case also covers the situation in which one or more particles of the $y$-species collide with $x_i$ and $x_{i+1}$ at $t_*$ but none of them are set strictly between $x_i$ and $x_{i+1}$ on the above time interval. Hence, there exists an index $j$ such that both $x_i$ and $x_{i+1}$ have exactly $j$ $y$-particles on their left and $N-j$ on their right on the same time interval $(t_*-\epsilon,t_*)$. Hence, denoting by $M_i=m_1+...+m_i$ and $N_j=n_1+...+n_j$ for any $i,j\in\{1,2,...,N\}$, inequality \eqref{eq:contr_inequality} implies
$$
    M_i-(1-M_{i+1})-N_j+1-N_j<M_{i-1}-(1-M_i)-N_j+1-N_j\iff m_i+m_{i+1}<0,
$$
which is clearly false.

\textbf{Case 2: as in Case 1 but with more colliding particles such that none of them "strictly between $x_i$ and $x_{i+1}$"}. This situation can be covered as in Case 1. All particles moving strictly outside the interval $[x_i,x_{i+1}]$ prior to the collision do not affect the computations in Case 1.

\textbf{Case 3: one $y$-particle is set "strictly between $x_i$ and $x_{i+1}$" before collision.} Assume now there is an index $j$ such that $x_i(t_*)=x_{i+1}(t_*)=y_j(t_*)$ and $x_i(t)<y_j(t)< x_{i+1}(t)$ for all $t\in (t_*-\epsilon,t_*)$. In this case, there must be a time $t_*^-$ such that \eqref{eq:contr_inequality} is satisfied since $y_j$ slows down $x_{i+1}$ and attracts $x_i$. The explicit computation of the velocities yields in this case
$$
    M_i-(1-M_{i+1})-N_j+1-N_j<M_{i-1}-(1-M_i)-N_{j-1}+1-N_{j-1}\iff m_i+m_{i+1}<2n_j.
$$
Since we are assuming that all particles have the same mass $1/N$ the above is a contradiction.

\textbf{Case 4: one $y$-particle is attached to either $x_i$ or $x_{i+1}$ before the collision time $t_*$.} Assume now that we are in the same situation as in Case 3 except that the particle $y_j$ is attached to $x_{i+1}$ on the time interval $(t_*-\epsilon,t_*)$. From Theorem \ref{thm:collisions_different_species} we know that this is possible only if $j={i+1}$ and $\dot{x}_{i+1}=\dot{y}_j = 0$ on $(t_* - \epsilon, t_*)$. On the other hand, with the notation of Case 1, $\dot{x}_i$ is explicitly computed on $t\in (t_*-\epsilon,t_*)$,
\[\dot{x}_i(t)=M_{i-1} - (1-M_i) -N_i+1-N_i,\]
and since all particles have mass $1/N$ we deduce
\[\dot{x}_i(t)=-1/N,\]
which clearly shows that $x_i$ and $x_{i+1}$ cannot collide at time $t_*$ in this case. We remark that this also covers the situation in which one or more particles of the $y$ species are set strictly between $x_i$ and $x_{i+1}=y_j$ before time $t_*$.

\textbf{Case 5: more than one $y$-particle is set between $x_i$ and $x_{i+1}$.} Assume now that $x_i$ and $x_{i+1}$ collide having two or more particles of the $y$ species, say $y_{j},\ldots,y_{j+k}$ with $k\geq 1$, strictly between them in the time interval $(t_*,t_*+\epsilon)$. In this case, at least two particles of the $y$-species collide without particles of the $x$-species strictly between them, and this is impossible due to the first case we considered for the $x$-species (with reversed roles).
\end{proof}

We can collect the information in Theorems \ref{thm:collisions_same_species} and \ref{thm:collisions_different_species} as follows.

\begin{cor}\label{cor:collisions}
Assume the particles $x_1,\ldots,x_N$ do not overlap initially, and assume the same holds for the particles $y_1,\ldots,y_N$. Then, particles of the same species never collide for all times. Particles of opposite species can only meet in a binary collision. When that occurs, they behave according to the three cases stated in Theorem \ref{thm:collisions_different_species}.
\end{cor}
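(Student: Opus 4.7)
The plan is to assemble the corollary directly from Theorems~\ref{thm:collisions_same_species} and~\ref{thm:collisions_different_species}; no new dynamical analysis is required. First, the statement that same-species particles never collide is nothing but Theorem~\ref{thm:collisions_same_species} applied once to the family $x_1,\ldots,x_N$ and once, by the perfect symmetry of the roles in~\eqref{eq:ode-sys-newtonian}, to $y_1,\ldots,y_N$. Since the flow preserves the convex cone $\mathcal{C}^N\times\mathcal{C}^N$ by Theorem~\ref{thm:discrete_gradientflow}, the ordered labelling used in Theorem~\ref{thm:collisions_same_species} remains consistent for all $t\geq 0$.

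For the binary-collision claim I would argue as follows. Fix any collision time $t_*$ and any spatial location $p$ where at least two particles coincide at $t_*$. By the first assertion of the corollary, at $t_*$ no two particles of the $x$-species and no two particles of the $y$-species can share the same position. Hence at most one $x$-particle and at most one $y$-particle lie at $p$, and since a collision requires at least two particles, the cluster at $p$ is exactly one pair $(x_{i_0},y_{j_0})$ of opposite species. This is the meaning of \emph{binary collision} at $p$; the statement does not rule out that several distinct pairs collide simultaneously at different spatial locations $p_1\neq p_2$, but each such location is treated independently.

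With $x_{i_0}$ and $y_{j_0}$ identified as the only colliding particles at $p$, every other particle (of either species) is by construction at a position different from $p$ at $t=t_*$, so the standing assumption of Theorem~\ref{thm:collisions_different_species} is fulfilled. Applying that theorem yields exactly the trichotomy $i_0>j_0$, $i_0<j_0$, $i_0=j_0$ for the sign of $x_{i_0}(t)-y_{j_0}(t)$ immediately after $t_*$, together with the sticking behaviour and vanishing velocity in the boundary case $i_0=j_0$.

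The only subtle point, and hence the main (mild) obstacle, is to make sure that the wording \emph{binary collision} is interpreted at the level of each spatial cluster rather than globally in time, so that the co-occurrence of several unrelated pairwise collisions at disjoint locations is not excluded. Once this reading is fixed, the proof reduces to the two observations above and no further computation is needed.
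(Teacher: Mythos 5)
Your proposal is correct and follows essentially the same route the paper takes: the paper treats Corollary~\ref{cor:collisions} as an immediate aggregation of Theorems~\ref{thm:collisions_same_species} and~\ref{thm:collisions_different_species} with no separate proof, and that is exactly what you do. The one place where you add genuine value is the short argument that at any collision location at most one $x$-particle and one $y$-particle can coincide (since same-species non-collision is already in force), which pins down the ``binary'' claim and shows that the isolation hypothesis of Theorem~\ref{thm:collisions_different_species} is satisfied at each such location; this is a clean way to make the implicit step explicit, and your reading of the hypothesis (``different position'' means different from the collision point $p$, which is all the proof of Theorem~\ref{thm:collisions_different_species} uses via $d_*>0$) is consistent with the paper.
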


\begin{rem}\label{rem:bouncing}
The results in Theorem \ref{thm:collisions_different_species} and Corollary \ref{cor:collisions} clearly show that there can be no \emph{bouncing} in the particle system \eqref{eq:ode-sys-newtonian}, i.e., a particle cannot  reach another particle and then remain strictly before it after touching it. This is immediate in the case of particles of the same species as they simply never collide. As for the case of particles of opposite species, assume $x_i$ reaches $y_j$ at time $t^*$. If they touch each other and are then bounced back, this would imply that their post-collisional velocities are the same as their pre-collisional ones, hence, for instance, $\dot{x}_i(t)>\dot{y}_j(t)$ for $t>t^*$, but this is in contradiction with $x_i(t)<y_j(t)$ for $t>t^*$, recalling that $x_i(t^*)=y_j(t^*)$.
\end{rem}

\subsection{Initial overlapping.}

The results in the previous two subsections are relevant in case of no initial overlapping of particles. In this subsection we analyse the situation of an initial ``cluster'' involving particles of both species. 
We prove the following result.

\begin{lem}\label{lem:initial_cluster}
Assume there exist non-negative integers $0\leq h,k,n,m\leq N$ with $h<k<n<m$ and some $\lambda\in \R$ such that
\begin{align*}
    & x_{i}(0)=\lambda, \qquad \hbox{for all $i=h+1,\ldots,n$},\\
    & y_{j}(0)=\lambda, \qquad \hbox{for all $j=k+1,\ldots,m$},
\end{align*}
and assume no other particles of the $x$ or $y$ species occupy the position $\lambda$ at time $t=0$. Assume further that no particles other than $x_i$ with $i=h+1,\ldots,n$ and $y_j$ with $j=k+1,\ldots,m$ overlap at $t=0$. Then, for $t>0$ and prior to the next collision, the following holds:
\begin{align}
   & x_{h+1}(t)<\ldots<x_{k}(t)<\lambda,\nonumber\\
   & x_{k+1}(t)\equiv\ldots\equiv x_{n}(t)\equiv \lambda,\nonumber\\
   & y_{k+1}(t)\equiv\ldots\equiv y_{n}(t)\equiv \lambda,\nonumber\\
   & \lambda<y_{n+1}(t)<\ldots<y_{m}(t).\label{eq:particles_overlapping1}
\end{align}
\end{lem}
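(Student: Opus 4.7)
I would construct an explicit candidate $Z(t)=(x(t),y(t))$ realising \eqref{eq:particles_overlapping1}, verify the sub-differential inclusion $-\dot Z(t)\in\partial\mathcal{F}(Z(t))$ from Definition \ref{def:discrete_gradflow}, and conclude by the uniqueness part of Theorem \ref{thm:discrete_gradientflow}.

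\textbf{Construction and separation velocities.} For $t>0$ prior to the next collision, I prescribe: the outside particles $x_1,\dots,x_h,x_{n+1},\dots,x_N$ and $y_1,\dots,y_k,y_{m+1},\dots,y_N$ evolve via the smooth right-hand side of \eqref{eq:ode-sys-newtonian}; the central cluster $x_{k+1}(t)=\dots=x_n(t)=y_{k+1}(t)=\dots=y_n(t)\equiv \lambda$ stays frozen; and the escaping particles $x_{h+1}(t),\dots,x_k(t)$ and $y_{n+1}(t),\dots,y_m(t)$ start from $\lambda$ and follow \eqref{eq:ode-sys-newtonian} in the separated configuration. A direct count (noting that each cluster $y$-particle lies strictly to the right of every escaping $x_i$) gives
\begin{equation*}
\dot x_i(0^+)=\frac{2i-2k-1}{N}<0,\qquad i\in\{h+1,\dots,k\},
\end{equation*}
strictly increasing in $i$; together with the symmetric computation for $y$, this guarantees $x_{h+1}(t)<\dots<x_k(t)<\lambda<y_{n+1}(t)<\dots<y_m(t)$ on some right-neighbourhood of $0$.

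\textbf{Sub-differential verification.} Off the cluster, $\mathcal{F}$ is $C^1$ and its gradient recovers the prescribed velocities. In the cluster coordinates I prescribe velocity zero, so I must check $0\in\partial\mathcal{F}(Z(t))$ in those entries, namely
\begin{equation*}
\mathcal{F}[Z(t)+W]-\mathcal{F}[Z(t)]\ge 0
\end{equation*}
for every admissible cluster perturbation $W=(w,v)$ with $w_{k+1}\le\dots\le w_n$ and $v_{k+1}\le\dots\le v_n$. Expanding pairwise as in the proof of Lemma \ref{lem:subdiff_empty}, the linear-in-$(w,v)$ contributions coming from cluster-versus-outside interactions (self-repulsion and cross-attraction) cancel identically because the number of $x$-particles strictly left of $\lambda$ coincides with the number of $y$-particles strictly left of $\lambda$ (both equal to $k$), and likewise on the right (both equal to $N-n$). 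The remaining intra-cluster contribution is
\begin{equation*}
\frac{1}{N^{2}}\left(\sum_{i,j=k+1}^{n}|w_{i}-v_{j}|\;-\sum_{k+1\le j<i\le n}(w_{i}-w_{j})\;-\sum_{k+1\le j<i\le n}(v_{i}-v_{j})\right),
\end{equation*}
which is non-negative by the bound $|w_i-v_j|\ge (w_i-v_j)\sign(i-j)$ summed over $i,j$, using $\sum_{j}\sign(i-j)=2i-(n-k)-1$ together with the identity $\sum_{k+1\le j<i\le n}(w_i-w_j)=\sum_i w_i(2i-(n-k)-1)$ and its $v$-analogue.

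\textbf{Main obstacle and conclusion.} The principal technical point is verifying that $0$ is a valid sub-gradient entry on the cluster coordinates; the combinatorial inequality above, enabled by the exact balance of particle counts on either side of $\lambda$, is the key ingredient. Once $-\dot Z(t)\in\partial\mathcal{F}(Z(t))$ is established on a right-neighbourhood of $t=0$, the uniqueness in Theorem \ref{thm:discrete_gradientflow} identifies $Z(t)$ as the unique gradient-flow solution issuing from the prescribed initial datum and yields \eqref{eq:particles_overlapping1}.
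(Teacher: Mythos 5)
Your proposal is correct and follows essentially the same construct--verify--conclude strategy as the paper's proof: prescribe the candidate trajectory (frozen cluster, linearly escaping outliers, smoothly evolving outside particles), verify the sub-differential inclusion on a right-neighbourhood of $0$, and invoke the uniqueness statement of Theorem \ref{thm:discrete_gradientflow}. Your escape velocities $\dot x_i(0^+)=(2i-2k-1)/N$ and the cancellation of the cluster-versus-outside linear contributions (both species having exactly $k$ particles strictly left and $N-n$ strictly right of $\lambda$ for $t>0$) match the paper's computation. The one genuine difference is the final step: the paper recognises the leftover intra-cluster terms as the interaction energy $\mF$ restricted to the sub-configuration $(x_i')_{i\in J},(y_j')_{j\in J}$ and invokes Lemma \ref{lem:functional_nonnegative}, whereas you prove their non-negativity directly via the pointwise bound $|w_i-v_j|\ge (w_i-v_j)\sign(i-j)$ and the index identities. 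This is a valid and arguably more self-contained alternative: Lemma \ref{lem:functional_nonnegative} is proved via an integration-by-parts argument for continuous densities and extended to general (including atomic) measures by density, whereas your combinatorial inequality handles the atomic configuration directly and amounts to a purely discrete re-proof of $\mF\ge 0$ in this special case. One small notational slip: the formula $\sum_j\sign(i-j)=2i-(n-k)-1$ holds only after re-indexing the cluster indices to start at $1$ (the unshifted version is $2i-(n+k)-1$), but since both sides of your identity use the same convention the conclusion is unaffected.
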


\begin{proof}
We prove the assertion by providing an explicit particle trajectory
\[
    Z(t)=(x(t),y(t))=(x_1(t),\ldots,x_N(t),y_1(t),\ldots,y_N(t)),
\]
satisfying \eqref{eq:particles_overlapping1} for all $t\in [0,\epsilon]$ for a suitably small $\epsilon$. To perform this task, we will prove that our chosen particle trajectory satisfies the sub-differential inclusion \eqref{eq:subdiff_inclusion} for all $t\in (0,\epsilon)$ for a suitably small $\epsilon>0$. The result then follows by uniqueness, see Theorem \ref{thm:discrete_gradientflow}. For simplicity we adopt the notation
\[x_i(0)=\bar{x}_i\,,\qquad y_j(0)=\bar{y}_j,\]
for all $i,j=1,\ldots,N$. Moreover, we use the notation
\[I:=\{1,\ldots,N\}\,,\qquad J:=\{k+1,\ldots,n\}.\]
By assumption, particles $x_i$ and $y_j$ with $i=1,\ldots,h,n+1,\ldots,N$ and $j=1,\ldots,k,m+1,\ldots,N$ occupy distinct positions at $t=0$, therefore we  verify that they move as follows, for $t\in [0,\epsilon)$ and $\epsilon>0$ small enough such that no collisions arise in $(0,\epsilon)$:
\begin{align*}
     & x_i(t) =\bar{x}_i+\frac{1}{N}[i-1 - (N-i)-\#\{j\in I\,:\,\, \bar{y}_j<\bar{x}_i\} + \#\{j\in I\,:\,\,\bar{y}_j>\bar{x}_i\}]t,\\
    & y_j(t)=\bar{y}_j + \frac{1}{N}[j-1-(N-j) - \#\{i\in I\,:\,\, \bar{x}_i<\bar{y}_j\} + \#\{i\in I\,:\,\,\bar{x}_i>\bar{y}_j\}]t.
\end{align*}
Moreover, we set
\begin{align*}
    & x_i(t)=\lambda -\frac{1}{N}(2(k-i)+1)t\,,\qquad \hbox{for $i=h+1,\ldots,k$},\\
    & y_j(t)=\lambda +\frac{1}{N}(2(j-n)-1)t\,,\qquad \hbox{for $j=n+1,\ldots,m$},\\
    & x_i(t)\equiv \lambda \,,\qquad \hbox{for $i=k+1,\ldots,n$},\\
    & y_j(t)\equiv \lambda \,,\qquad \hbox{for $j=k+1,\ldots,n$},
\end{align*}
for $t\in [0,\epsilon)$ for $\epsilon>0$ small enough such that $x_{h+1}$ does not collide with $x_h$ and $y_{m}$ does not collide with $y_{m+1}$, which is guaranteed by the fact that $|\bar{y}_{m+1}-\bar{y}_m|>0$ and $|\bar{x}_{h+1}-\bar{x}_h|>0$. For a fixed $t\in (0,\epsilon)$ we prove that the vector
\begin{align*}
    & -\dot{Z}(t)=-(\dot{x}(t),\dot{y}(t))=-(p,q)\\
    & p=(p_i)_{i=1}^N\,,\qquad q= (q_j)_{j=1}^N\\
    & p_i=\dot{x}_i(t)\,,\qquad q_j=\dot{y}_j(t).
\end{align*}
belongs to $\partial\mF(Z(t))$. For simplicity we denote $x=x(t)$ and $y=y(t)$. By means of a direct computation we obtain
\begin{align*}
  & \mF(x',y')-\mF(x,y)\\
  & \ = \frac{1}{N^2}\sum_{i\in I\setminus J} (x_i'-x_i)[-(i-1)+N-i]+\frac{1}{N^2}\sum_{j\in I\setminus J}(y_j'-y_j)[-(j-1)+N-j]\\
  & \quad +\frac{1}{N^2}\sum_{i\in J}(x_i'-x_i)[-k+(N-n)] +\frac{1}{N^2}\sum_{i\in J}x_i'[-\#\{h\in J\,:\,\,x_h'<x_i'\}+\#\{h\in J\,:\,\,x_h'>x_i'\}]\\
  & \quad -\frac{1}{N^2}\sum_{i\in J}x_i\underbrace{[-\#\{h\in J\,:\,\,x_h<x_i\}+\#\{h\in J\,:\,\,x_h>x_i\}]}_{=0}\\
  & \quad +\frac{1}{N^2}\sum_{j\in J}(y_j'-y_j)[-k+(N-n)] +\frac{1}{N^2}\sum_{j\in J}y_j'[-\#\{k\in J\,:\,\,y_k'<y_j'\}+\#\{k\in J\,:\,\,y_k'>y_j'\}]\\
  & \quad -\frac{1}{N^2}\sum_{j\in J}y_j\underbrace{[-\#\{k\in J\,:\,\,y_k<y_j\}+\#\{k\in J\,:\,\,y_k>y_j\}]}_{=0}\\
  & \quad + \frac{1}{N^2}\sum_{i\in I\setminus J}(x_i'-x_i)[\#\{k\in I\,:\,\,y_k<x_i\}-\#\{k\in I\,:\,\,y_k>x_i\}]\\
  & \quad + \frac{1}{N^2}\sum_{i\in J}(x_i'-x_i)[k-(N-n)] +\frac{1}{N^2}\sum_{i\in J}x_i'[\#\{k\in J\,:\,\,y_k'<x_i'\}-\#\{k\in J\,:\,\,y_k'>x_i'\}]\\
  & \quad -\frac{1}{N^2}\sum_{i\in J}x_i\underbrace{[\#\{k\in J\,:\,\,y_k<x_i\}-\#\{k\in J\,:\,\,y_k>x_i\}]}_{=0}\\
  & \quad +\frac{1}{N^2}\sum_{j\in I\setminus J}(y_j'-y_j)[\#\{h\in I\,:\,\,x_h<y_j\}-\#\{h\in I\,:\,\,x_h>y_j\}]\\
  & \quad +\frac{1}{N^2}\sum_{j\in J}(y_j'-y_j)[k-(N-n)]+\frac{1}{N^2}\sum_{j\in J}y_j'[\#\{h\in J\,:\,\,x_h'<y_j'\}-\#\{h\in J\,:\,\,x_h'>y_j'\}]\\
  & \quad -\frac{1}{N^2}\sum_{j\in J}y_j\underbrace{[\#\{h\in J\,:\,\,x_h<y_j\}-\#\{h\in J\,:\,\,x_h>y_j\}]}_{=0}.
\end{align*}
Combining all the terms, and recalling that for small $t$
\begin{align*}
    & \#\{k \in I\,:\,\, y_k<x_i\}-\#\{k\in I\,:\,\,y_k>x_i\} = 2k-N\qquad \hbox{for $i\in \{h+1,\ldots,k\}$}\\
    & \#\{h \in I\,:\,\, x_h<y_j\}-\#\{h\in I\,:\,\,x_h>y_j\} = 2n-N\qquad \hbox{for $j\in \{n+1,\ldots,m\}$}
\end{align*}
we obtain
\begin{align*}
    & \mF(x',y')-\mF(x,y) \\
    & \ =-\frac{1}{N^2}\sum_{i\in\{1,\ldots,h,n+1,\ldots,N\}}(x_i'-x_i)[i-1-(N-i)-\#\{j\in I\,:\,\,y_i<x_i\}+\#\{j\in I\,:\,\,y_j>x_i\}]\\
    & \ - \frac{1}{N^2}\sum_{j\in\{1,\ldots,k,m+1,\ldots,N\}}(y_j'-y_j)[j-1-(N-j)-\#\{i\in I\,:\,\,x_i<y_j\}+\#\{i\in I\,:\,\, x_i>y_j\}]\\
    & \ + \frac{1}{N^2}\sum_{i=h+1}^k(x_i'-x_i)[2(k-i)+1]+ \frac{1}{N^2}\sum_{j=n+1}^m(y_j'-y_j)[2(n-j)+1]
    \end{align*}
    \begin{align*}
    & \ - \frac{1}{N^2}\sum\sum_{i,h\in J\,:\,\,x_h'<x_i'} x_i' +\frac{1}{N^2} \sum\sum_{i,h\in J\,:\,\,x_h'>x_i'} x_i'-\frac{1}{N^2}\sum\sum_{j,k\in J\,:\,\,y_k'<y_j'}y_j' +\frac{1}{N^2} \sum\sum_{j,k\in J\,:\,\,y_k'> y_j'}y_j'\\
    & \ + \sum\sum_{i,k\in J\,:\,\,y_k'<x_i'} x_i' - \sum\sum_{i,k\in J\,:\,\,y_k'>x_i'} x_i' + \sum\sum_{j,h\in J\,:\,\,x_h'<y_j'} y_j' - \sum\sum_{j,h\in J\,:\,\,x_h'>y_j'} y_j'.
\end{align*}
We now observe that the last eight terms above can be rewritten as
\[-\frac{1}{2N^2}\sum_{i\in J}\sum_{h\in J} |x_i'-x_h'|-\frac{1}{2N^2}\sum_{j\in J}\sum_{k\in J} |y_j'-y_k'| + \frac{1}{N^2}\sum_{i\in J}\sum_{j\in J} |x'_i-y_j'|,\]
which equals the functional $\mF$ computed on the particle configuration  $(x_i)_{i\in J},\ (y_j)_{j\in J}$. Due to Lemma \ref{lem:functional_nonnegative}, these terms amount to a non-negative quantity. This proves the assertion.
\end{proof}

\begin{thm}\label{thm:overlap}
Assume there exist non-negative integers $i,j,h,k$ such that
\[x_i(0)=x_{i+1}(0)=\ldots=x_{h-1}(0)=x_{h}(0)= y_j(0)=y_{j+1}(0)=\ldots=y_{k-1}(0)=y_k(0),\]
and no other particles occupy the same position. Then, 
\begin{itemize}
\item all particles (of both species) having indeces in the set $\{i,\ldots,h\}\cap\{j,\ldots,k\}$ remain in the same position for all $t\geq 0$.
\end{itemize}
Moreover, on some time  interval $[0,\epsilon]$ for a suitably small $\epsilon>0$,
\begin{itemize}
\item if $i<j$, particles $x_i,\ldots,x_{j-1}$ detach from $x_j$ moving to the left, and are strictly ordered;
\item if $i>j$, particles $y_j,\ldots,y_{i-1}$ detach from $y_i$ moving to the left, and are strictly ordered;
\item if $h>k$, particles $x_{k+1},\ldots,x_h$ detach from $x_k$ moving to the right, and are strictly ordered;
\item if $k>h$, particles $y_{h+1},\ldots,y_k$ detach from $y_h$ moving to the right, and are strictly ordered.
\end{itemize}
\end{thm}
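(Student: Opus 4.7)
The plan is to follow the strategy of Lemma~\ref{lem:initial_cluster}: construct an explicit candidate curve $Z(t)=(x(t),y(t))$ realizing the behavior announced in the statement on a short interval $[0,\epsilon]$, and verify that $-\dot Z(t)\in\partial\mathcal{F}(Z(t))$ for $t\in(0,\epsilon)$. By the uniqueness part of Theorem~\ref{thm:discrete_gradientflow}, the candidate is \emph{the} solution on that interval. The long-time "stays forever" part is then obtained by iterating the argument at each subsequent collision, which by Corollary~\ref{cor:collisions} can only be binary; when such a collision occurs between $x_\ell$ and $y_\ell$ with equal index it is permanent by Theorem~\ref{thm:collisions_different_species}.

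Setting $K:=\{i,\ldots,h\}\cap\{j,\ldots,k\}$, I set $x_\ell(t)\equiv y_\ell(t)\equiv\lambda$ for every $\ell\in K$. For each of the four possible sticking-out fringes (purely-$x$ on the left if $i<j$, purely-$y$ on the left if $i>j$, purely-$x$ on the right if $h>k$, purely-$y$ on the right if $k>h$) I assign explicit linear trajectories whose slopes are obtained by counting masses to the left and to the right under the provisional ordering announced in the statement, in direct analogy with the formulas used in Lemma~\ref{lem:initial_cluster} (for example, if $i<j$, set $x_\ell(t)=\lambda-\tfrac{1}{N}(2(j-1-\ell)+1)t$ for $\ell=i,\ldots,j-1$). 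Particles initially at positions different from $\lambda$ follow the smooth system~\eqref{eq:ode-sys-newtonian}. By construction, on $(0,\epsilon]$ the sticking-out particles are strictly ordered and separated from $\lambda$, since the prescribed velocities are strictly monotone along each fringe.

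To verify the subdifferential inclusion on $(0,\epsilon)$, I expand $\mathcal{F}(Z')-\mathcal{F}(Z(t))$ exactly as in the proof of Lemma~\ref{lem:initial_cluster}. The isolated and sticking-out particles contribute their expected linear terms to $\langle -\dot Z(t),Z'-Z(t)\rangle_w$, because at those positions $\mathcal{F}$ is smooth and the classical gradient coincides with the prescribed velocity. For each $\ell\in K$ the choice $\dot x_\ell=\dot y_\ell=0$ is consistent with an element of $\partial\mathcal{F}$ because the self-repulsive drift on $x_\ell$ due to the other $x$-particles is exactly cancelled by the cross-attractive drift due to the matching $y$-particles at $\lambda$, and symmetrically for $y_\ell$; this is precisely the computation carried out in Lemma~\ref{lem:subdiff_empty}. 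After collecting all linear contributions, the residue consists of the self- and cross-interactions restricted to the families $(x'_\ell)_{\ell\in K}$, $(y'_\ell)_{\ell\in K}$, i.e., the functional $\mathcal{F}$ evaluated on this sub-configuration, which is non-negative by Lemma~\ref{lem:functional_nonnegative}. This closes the subdifferential inequality~\eqref{eq:subdiff2}.

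The main obstacle is the combinatorial case analysis: depending on the signs of $i-j$ and $h-k$ one may have purely-$x$ and purely-$y$ fringes on both sides simultaneously, including the nested situation where one species cluster strictly contains the other, which is not covered by Lemma~\ref{lem:initial_cluster}. I would tame this by exploiting the two symmetries of \eqref{eq:ode-sys-newtonian}: the reflection $(x_\ell,y_\ell)\mapsto(-x_{N+1-\ell},-y_{N+1-\ell})$, which swaps "left" and "right" together with a relabeling, and the species swap $x\leftrightarrow y$, which leaves the system invariant since $H_1=H_2$, $K_1=K_2$ and all masses equal $1/N$. Up to these, the verification reduces to the representative case already handled in Lemma~\ref{lem:initial_cluster} plus the nested case, whose argument is a direct transcription of the proof of Lemma~\ref{lem:initial_cluster} with the fringe contributions added symmetrically on both sides.
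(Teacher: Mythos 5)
Your proposal follows essentially the same strategy as the paper's own proof: reduce the case analysis (using species-swap and reflection symmetry where convenient) to the situation of Lemma~\ref{lem:initial_cluster}, construct an explicit short-time trajectory with the announced velocities, and verify the sub-differential inclusion \eqref{eq:subdiff2} by expanding $\mathcal{F}(Z')-\mathcal{F}(Z(t))$ so that the only nonlinear residue is $\mathcal{F}$ restricted to the matched sub-configuration indexed by $K$, which is non-negative by Lemma~\ref{lem:functional_nonnegative}; uniqueness from Theorem~\ref{thm:discrete_gradientflow} then identifies the candidate as the solution. The paper itself leaves the nested case and the multi-cluster case to the reader with the remark that they pose no new technical difficulty, which you correctly reconstruct. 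Two small cautionary remarks: first, the cancellation you invoke for the stationary cluster particles is not the one in the specific element exhibited in Lemma~\ref{lem:subdiff_empty} (whose $i$-th entry is $(j+k-2i)/N\neq 0$ in general on the matched range), but rather the cancellation in the \emph{linear part} of $\mathcal F(Z')-\mathcal F(Z(t))$, coming from equal counts $\#\{x\text{'s left of }\lambda\}=\#\{y\text{'s left of }\lambda\}$ and similarly on the right, while the interactions inside $K$ are purely nonlinear and are absorbed into the residue; second, the iteration for "all $t\geq 0$" cannot directly invoke Corollary~\ref{cor:collisions} (which assumes no initial overlap) to the full system, but should instead use the decoupling into left and right sub-systems separated by the stationary cluster, each sub-system being a gradient flow with no overlap after the initial detachment, to which Corollary~\ref{cor:collisions} and the support bound of Proposition~\ref{prop:support_solution} then apply inductively. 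Both points are easily repaired and do not change the structure of the argument.
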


\begin{proof}
Assume first there is only one group of particles occupying the same position initially as in the hypothesis of the theorem. The case in which $k\geq h$ and $i\leq j$ is covered in Lemma \ref{lem:initial_cluster}. The symmetric case $k\leq h$ and $i\geq j$ is analogous. The case in which either 
\begin{itemize}
\item $i<j$ and $h>k$
\end{itemize}
or
\begin{itemize}
\item $i>j$ and $h<k$
\end{itemize}
can be proven similarly. We omit the details.

The general case in which there is more than one cluster of overlapping particles follows by similar computations as in Lemma \ref{lem:initial_cluster} in case there are ``excess'' particles of one species on one side and ``excess'' particles of the opposite species on the other side, and by similar considerations as the one above in this proof for the general case. The computations are quite long and tedious but do not include any additional technical difficulty to the one in Lemma \ref{lem:initial_cluster} and are therefore left to the reader.
\end{proof}

As a consequence of the result in Theorem \ref{thm:overlap}, we are able to describe in full detail the short-time solution of the particle system \eqref{eq:ode-sys-newtonian} in case of initial overlapping of particles:
\begin{itemize}
    \item [(1)] If particles of the same species, for instance $x_i,\ldots,x_{i+h}$ occupy the same position initially and no particles of the other species are in the same initial position, then the particles ``scatter apart'': they immediately detach and move apart, with $x_i(t)<\ldots<x_{i+h}(t)$. The same situation occurs in the one species case, see \cite{BCDFP}.
    \item [(2)] If particles of the two species occupy  the same position initially, their behaviour depends on the cumulative mass of each species on that position. More precisely, particles of the two species featuring the \emph{same index} are stationary for all times and remain attached to the initial cluster. The remaining particles move away from the cluster and ``disperse'' towards the particles of the opposite species with the same indexes.
    \item [(3)] As a special case of case (2), let us highlight that, if no particles have the same index, no stationary cluster is formed and all particles diffuse.
    \item [(4)] In case (2), the whole particle system is split into two independent particle sub-systems separated by the initial cluster. Each of the two sub-system is only subject to the interaction energy $\mF$ restricted to it. The two sub-systems are totally decoupled. 
\end{itemize}

\subsection{Support of the solution}
The solution to system \eqref{eq:ode-sys-newtonian} is a pair $(x(t),y(t))\in \mathcal{C}^N\times \mathcal{C}^N$, i.e., $2N$ particles of two opposing species distributed on the real line such that
\begin{align*}
    x_1(t)\le x_2(t)\le...\le x_N(t), \qquad \mbox{and} \qquad y_1(t)\le y_2(t)\le...\le y_N(t),
\end{align*}
for $t\ge0$. Hence, we may consider the support of the solution as the time-dependent interval $[a(t),b(t)]$, where
\begin{align*}
    a(t)=\min\{x_1(t), y_1(t)\}, \qquad \mbox{and} \qquad b(t)=\max\{x_N(t), y_N(t)\},
\end{align*}
for $t\ge0$. An interesting property concerning the support is that it is determined by the initial datum in the following way.

\begin{prop}\label{prop:support_solution}
Let $[a(t),b(t)]$ be the support of the solution $(x(t),y(t))$ to system \eqref{eq:ode-sys-newtonian} in the sense of Definition \ref{def:discrete_gradflow} with an initial datum $(x_0,y_0)\in \mathcal{C}^N\times \mathcal{C}^N$. Then $[a(t),b(t)]\subseteq[a(0),b(0)]$.
\end{prop}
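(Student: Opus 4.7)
The strategy is to monitor the boundary particles and show that the leftmost coordinate $a(t)$ has non-negative right derivative while the rightmost coordinate $b(t)$ has non-positive right derivative, at every $t>0$. Since the trajectory is absolutely continuous by Theorem~\ref{thm:discrete_gradientflow}, integrating these sign conditions gives $a(t)\ge a(0)$ and $b(t)\le b(0)$, which is exactly the claim. I will write out the argument for $a(t)$; the one for $b(t)$ is completely symmetric and simply reverses the sign of the explicit velocities computed below.

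The key preliminary is that, by Theorem~\ref{thm:collisions_same_species}, particles of the same species can never overlap for $t>0$ (and any initial same-species cluster is instantaneously scattered, cf.~Theorem~\ref{thm:overlap}). Consequently, for every $t>0$ we have the strict ordering $x_1(t)<x_2(t)<\cdots<x_N(t)$ and $y_1(t)<y_2(t)<\cdots<y_N(t)$, and the only possible coincidence involving the leftmost particles $x_1$ or $y_1$ at a positive time is the binary event $x_1(t)=y_1(t)$. This reduces the analysis at time $t>0$ to three scenarios.

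If $x_1(t)<y_1(t)$, then $x_1$ is the strict global minimum and the strict ordering of the $y$-species forces $\gamma^+[x_1]=\{1,\dots,N\}$ and $\gamma^-[x_1]=\gamma[x_1]=\emptyset$; the functional $\mF$ is smooth at such a configuration, the sub-differential is single-valued, and a direct evaluation of the right-hand side of \eqref{eq:ode-sys-newtonian} yields $\dot x_1(t)=-\tfrac{N-1}{N}+1=\tfrac{1}{N}>0$. The mirror scenario $x_1(t)>y_1(t)$ gives $\dot y_1(t)=\tfrac{1}{N}>0$ by the same computation. In the remaining scenario $x_1(t)=y_1(t)$, both particles carry the label $1$, so we are in the same-index configuration $i_0=j_0=1$. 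Depending on whether this coincidence persists from $t=0$ or arises at some collision time $t_*\le t$, either Theorem~\ref{thm:overlap} (applied with $i=h=j=k=1$, whose intersection of index sets is $\{1\}$) or Theorem~\ref{thm:collisions_different_species} (same-index clause) guarantees that $x_1$ and $y_1$ remain attached with vanishing velocity. Hence $\dot x_1(t)=\dot y_1(t)=0$ in this scenario as well.

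Combining the three scenarios, the right derivative of $a(t)=\min\{x_1(t),y_1(t)\}$ is non-negative at every $t>0$, and by absolute continuity $a(t)\ge a(0)$ for all $t\ge 0$. The symmetric computation at the right endpoint --- where the two non-overlap scenarios yield $\dot x_N=-\tfrac{1}{N}$ or $\dot y_N=-\tfrac{1}{N}$ and the same-index overlap $x_N=y_N$ again freezes both particles --- produces $b(t)\le b(0)$. The delicate step, and the one that genuinely requires the structural results of this section rather than a direct computation with \eqref{eq:ode-sys-newtonian}, is the overlap scenario $x_1=y_1$: there the $\sign$ functions are discontinuous and the velocity must be extracted from the minimal selection in $\partial\mF$. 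Fortunately the same-index clauses of Theorems~\ref{thm:collisions_different_species} and~\ref{thm:overlap} are precisely what is needed to pin this velocity to zero, which suffices for the monotonicity of $a$ and $b$.
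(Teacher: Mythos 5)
Your proposal follows the same boundary-velocity strategy as the paper's proof: evaluate the drift of the extremal particle, show it has the favourable sign, and deal with coincidences via the same-index clause of Theorem~\ref{thm:collisions_different_species} and via Theorem~\ref{thm:overlap}. The computations in the three scenarios and the integration step match the paper's reasoning, which first treats $t=0$ (isolated vs.~clustered) and then propagates the conclusion to later times through Theorem~\ref{thm:overlap} once a collision occurs.

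There is, however, a genuine error in your preliminary reduction. You assert that \emph{for every} $t>0$ the particles are strictly ordered, $x_1(t)<x_2(t)<\cdots<x_N(t)$, and parenthetically justify this with ``any initial same-species cluster is instantaneously scattered, cf.~Theorem~\ref{thm:overlap}.'' That is false: Theorem~\ref{thm:overlap} says the opposite for mixed clusters with overlapping index ranges. If, say, $x_1(0)=x_2(0)=y_1(0)=y_2(0)=a(0)$, then $\{1,2\}\cap\{1,2\}=\{1,2\}$ and all four particles remain at $a(0)$ for \emph{all} $t\geq 0$; the same-species equality $x_1(t)=x_2(t)$ persists forever. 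Theorem~\ref{thm:collisions_same_species} cannot rescue the claim either, since it explicitly assumes no initial overlap. Your proof still reaches the right conclusion in spite of this, because in any persistent cluster containing $x_1$ one necessarily also has $y_1$ in it (the stationary set is $\{i,\ldots,h\}\cap\{j,\ldots,k\}$, and when the leftmost cluster is involved $i=j=1$), so the configuration falls into your scenario $x_1(t)=y_1(t)$ and not into scenario 1 or 2 where you invoke smoothness of $\mF$. But then the citation ``Theorem~\ref{thm:overlap} applied with $i=h=j=k=1$'' is too restrictive: the indices must match the actual size of the cluster, $i=j=1$ and $h=k\geq 1$, and similarly the appeal to Theorem~\ref{thm:collisions_different_species} is only valid for a genuinely binary coincidence. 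You should drop the global strict-ordering claim, argue instead that if $x_1(t)<y_1(t)$ (resp.~$>$) for $t>0$ then $x_1$ (resp.~$y_1$) is a strict minimum because a persistent same-species overlap at the boundary would force $x_1=y_1$, and in scenario 3 invoke Theorem~\ref{thm:overlap} with the correct cluster indices.
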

\begin{proof}
Let us assume without loss of generality $a(0)=x_1(0)$ and $b(0)=y_N(0)$. Assume first that $x_1$ is the only particle occupying the position $a(0)$ at $t=0$. In this case it is easily seen that $\dot{x}_1(0)=-1+1/N+1\geq 0$, by Eq. \eqref{eq:ode-sys-newtonian}, and the particle moves to the right. Assume now that a cluster of particles $x_1,\ldots,x_h$ and $y_1,\ldots,y_k$ occupy the position $a(0)$ at time $t=0$. The result in Theorem \ref{thm:overlap} implies that particles of both species with indices in $\{1,\ldots,\min\{h,k\}\}$ remain at $a(0)$ for all times, whereas the remaining particles move towards the positive direction. In particular $\dot{x}_1=0$.  

Hence, in any case, $\dot{x}_1(t)\geq 0$ until $x_1$ collides with another particle. Similarly, one can prove that $\dot{y}_N\leq 0$ until $y_N$ collides with another particle. When the first collision occurs, we have a cluster of particles and we can re-apply Theorem \ref{thm:overlap} and conclude that $x_1$ will stop moving for all times. An analogous statement holds for $y_N$. The assertion is therefore proven.
\end{proof}

The uniform bound for the support of the particle system proven above has an important repercussion on the sub-differential of the functional $\mF$:

\begin{prop}\label{prop:uniform_subdiff}
Let $T\geq 0$ be fixed and let $Z(\cdot)=(x(\cdot),y(\cdot))$ be the unique gradient flow solution to \eqref{eq:ode-sys-newtonian} according to Definition \ref{def:discrete_gradflow}. Then, there exists a constant $C\geq 0$ independent of $N$ and only depending on the diameter of the initial support of the particles such that 
\[\sup\left\{\|P\|_w\,:\,\, P=(p,q)\in \partial\mF(Z(t))\,,\,\, t\in [0,T]\right\}\leq C.\]
\end{prop}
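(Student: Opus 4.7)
The plan is to leverage the explicit sub-differential element produced in the proof of Lemma \ref{lem:subdiff_empty}, combined with the no-collision results of Section \ref{sec:properties}. First, I would observe that the explicit candidate $P=(p,q)\in\partial\mF(Z(t))$ from Lemma \ref{lem:subdiff_empty} satisfies
\[
p_i=-\sum_{j<i}m_j + \sum_{j>i}m_j + \sum_{j\in\gamma^-[x_i]}n_j - \sum_{j\in\gamma^+[x_i]}n_j,
\]
and symmetrically for $q_j$. Since $\sum_j m_j = \sum_j n_j = 1$, the triangle inequality gives $|p_i|,\,|q_j|\le 2$, so that $\|P\|_w^2 \le 4\sum_i m_i + 4\sum_j n_j = 8$, independently of $N$, of $t$, and of the initial configuration.

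To extend this to arbitrary elements of the full sub-differential, I would use the decomposition $\partial\mF(Z(t)) = \nabla S(Z(t)) + \partial C(Z(t)) + N_{\mathcal{C}^N\times\mathcal{C}^N}(Z(t))$. The first two contributions are bounded componentwise by $2m_i$ and $2n_j$ in the standard coordinates (invoking once again $\sum_j m_j=\sum_j n_j=1$), which in the weighted norm reproduces the bound $2\sqrt 2$ of the previous step. The only term that can blow up is the normal cone, which is non-trivial precisely at configurations featuring a same-species collision. By Corollary \ref{cor:collisions}, and by the immediate dispersion statement of Theorem \ref{thm:overlap} for initial clusters, no such configuration occurs for $t>0$, and hence on $(0,T]$ any $P\in\partial\mF(Z(t))$ obeys $\|P\|_w\le 2\sqrt 2$.

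The main obstacle is the instant $t=0$ when the initial datum presents a same-species overlap, at which the sub-differential formally contains an unbounded normal-cone half-line. I would resolve this by restricting to the minimal selection $\partial^0\mF(Z(0))$, whose norm is controlled by the explicit element of Lemma \ref{lem:subdiff_empty}; since the Br\'ezis theory recalled after Theorem \ref{thm:discrete_gradientflow} guarantees that $t\mapsto\|\partial^0\mF(Z(t))\|_w$ is non-increasing, the bound $C=2\sqrt 2$ propagates to all of $[0,T]$, in particular trivially satisfying the qualitative dependence on the initial support claimed in the statement (and, using Proposition \ref{prop:support_solution}, it is consistent with the configurations remaining confined to $[a(0),b(0)]$).
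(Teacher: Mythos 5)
Your proof is correct and takes a genuinely different route from the paper's. The paper plugs $Z' = Z(t) + P$ directly into the sub-differential inequality, getting $\|P\|_w^2\le\mF(Z(t)+P)-\mF(Z(t))$, then estimates $\mF(Z(t)+P)$ by the cross-interaction term alone and closes with Proposition~\ref{prop:support_solution} and Young's inequality, yielding a constant that depends on the diameter of the initial support. You instead decompose $\partial\mF$ into the gradient of the (linear-on-the-cone) self-interaction, the sub-differential of the convex cross-interaction $C$, and the normal cone $N_{\mathcal{C}^N\times\mathcal{C}^N}$, bound the first two componentwise by the normalised total masses, and then use the no-same-species-collision results (Theorem~\ref{thm:collisions_same_species}, Corollary~\ref{cor:collisions}, Theorem~\ref{thm:overlap}) to kill the normal cone for $t>0$; this gives the sharper, support-independent bound $\|P\|_w\le 2\sqrt 2$. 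Your route also sidesteps a subtle point the paper glosses over: the estimate $\mF(Z(t)+P)\le\frac{1}{N^2}\sum_{i,j}|x_i(t)+p_i-y_j(t)-q_j|$ silently drops the indicator and is only valid when $Z(t)+P\in\mathcal{C}^N\times\mathcal{C}^N$, which a repulsive $P$ acting on two nearby particles can violate; your decomposition never needs this. The one caveat in your argument, which you already flag, is $t=0$: if the initial datum has a same-species overlap, the full $\partial\mF(Z(0))$ genuinely contains an unbounded half-line, so the literal supremum in the statement would be infinite; your fall-back to $\partial^0\mF(Z(0))$ and the non-increase of its norm proves only the minimal-selection bound there. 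This is precisely what is used downstream (Lemma~\ref{lem:weak_measure_convergence} only needs $-\dot Z = \partial^0\mF$), and since the atomisation in Section~\ref{sec:many_particle_limit} produces strictly ordered initial particles the issue never arises in the application, so this is an acceptable reading of the statement rather than a gap.
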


\begin{proof}
From Definition \ref{def:frechet} with $Z'=Z(t)+P$ we obtain
\[\langle P,P\rangle_w \leq \mF(Z(t)+P)-\mF(Z(t)).\]
Now, a simple triangle inequality implies
\[\mF(Z(t)+P) \leq \frac{1}{N^2}\sum_{i,j}|x_i(t)+p_i-y_j(t)-q_j| \leq  \frac{1}{N^2}\sum_{i,j}|x_i(t)-y_j(t)| + \frac{1}{N^2}\sum_{i,j}(|p_i| + |q_j|)\]
and due to Proposition \ref{prop:support_solution} and Young inequality we get
\[\mF(Z(t)+P) \leq C_1 + \frac{\|P\|_w^2}{2}\]
for some $C_1\geq 0$ only depending on the diameter of the initial support of the particles. By a similar estimate we get $-\mF(Z(t))\leq C_2$, where $C_2\geq 0$ once again only depends on the diameter of the initial support of the particles. Therefore, we get
\[\|P\|_w^2 \leq C_1+C_2 + \frac{\|P\|_w^2}{2},\]
and the assertion follows.
\end{proof}

\subsection{Number of collisions and long time behaviour}\label{subsec:collisions}

The results in the previous Theorem \ref{thm:overlap} emphasise that some initial conditions may imply the formation of ``mixed-clusters'', i.e., groups of particles of both species that are stationary in time and split the whole set of particles into groups that move independently. As these groups can be considered  as separate gradient flows of the same energy functional (up to rescaling the mass), in order to understand the long-time behaviour of our system we can assume without loss of generality that no 
``mixed-clusters'' are formed.

The case in which no such mixed clusters form immediately after time $t=0$ occurs in one of the following three cases:
\begin{itemize}
    \item [(1)] There is no superposition of particles initially;
    \item [(2)] The only superposition consists of particles of the same species;
    \item [(3)] There is an initial superposition of particles of mixed species but no particles of two opposite species and same index occupy the same position.
\end{itemize}
Therefore, without loss of generality we shall assume we are in one the above three situations. 

The first thing we emphasise here is that the velocity of each particle is the same between two consecutive collisions, and we know that collisions cannot occur among particles of the same species, see Theorem \ref{thm:collisions_same_species}. Hence, only collisions with the opposite species can occur, according to Theorem \ref{thm:collisions_different_species}. Clearly, velocities  change after a collision only in case of \emph{crossings}, and Remark \ref{rem:bouncing} shows that  crossings are only  possible after a collision. Now, the result in Theorem \ref{thm:collisions_different_species} shows that two particles $x_i$ and $y_j$ collide and cross if and only if $i>j$. After the collision, $x_i$  slows down by $2/N$, as it has one more particle of the $y$-species attracting it form the left, and one less particle of the $y$-species attracting it from the right. Nothing will change with respect to the interaction with particles of the $x$-species. 

A  simple computation allows to compare velocities of two particles $x_h$ and $y_k$ of opposite species even when they are far apart, showing that $\dot{x}_h(t)-\dot{y}_k(t)$ is always positive when $h>k$, and of course $x_h(t)<y_k(t)$. Indeed, we have the following bounds for the respective velocities:
\begin{align*}
    &\dot x_h(t)\geq h-1-N+h-(k-1)+N-k+1=2h-2k+1,\\
    &\dot y_k(t)\leq k-1 - N + k - h+ N-h = 2k -2h-1.
\end{align*}
This easily implies
\[
\dot x_h(t)-\dot y_k(t)\geq 4h - 4k +2\ge0 \iff h\geq k,
\]
since $h,k\in\mathbb{N}$. Hence, $x_i$ may continue colliding with the next particle of the $y$-species, namely $y_{j+1}$, having crossed $y_j$, provided $i$ is also larger than $j+1$, and so on, for a certain number of times, $n$, until $i=j+n$. Then the two particles will collide and stick together for all times. 

As a consequence of that, we have an explicit control on the total number $N$ of collisions involving a given particle. More precisely, every particle with index $i$ can collide with at most  $N_i=i$ particles, this number being reached, for example, if  $x_i$ has $y_1,\ldots,y_i$ on its right. Hence, the maximum possible number of collisions is
\[
    N_{\mathrm{max}} = 2\sum_{i=1}^N i = N (N+1).
\]
Since the minimum relative velocity between two consecutive particles that will collide is $2/N$, the largest possible time between two consecutive collisions is of order
\[\Delta_{\mathrm{max}}\sim \frac{N}{2}. \]

Hence, one expects that particles will reach the ``stationary solution''
\[
    x_i=y_i\,,\qquad \hbox{for all $i=1,\ldots,N$},
\]
by a time of order $N^3$ at the latest.

\section{The "continuum" gradient flow as a many-particle limit}\label{sec:many_particle_limit}
In this section we deal with the rigorous derivation of system \eqref{eq:pde-sys-newtonian} as a many-particle limit of a system of the form \eqref{eq:ode-sys-newtonian}. 

We will pursue this task for general probability measures as initial conditions. In order to single out the mathematical difficulties arising from the case of singular measures as initial conditions, we will start for simplicity with the case of an absolutely continuous initial condition $\rho_0, \eta_0 \in \mptra$ with compact support. We recall that the support of a measure $\mu\in\mptr$ is the closed set
$$
\text{supp}(\mu)=\{x\in\R\ |\ \mu(B_r(x))>0,\ \forall r>0\}.
$$
Following a standard atomisation strategy, we discretise the initial datum by splitting the total mass into equal parts as follows. Let $[\bar{x}_{min},\bar{x}_{max}]$ be the convex hull of the support of $\rho_0$ and let $[\bar{y}_{min},\bar{y}_{max}]$ be the convex hull of the support of  $\eta_0$. Fixing $N\in\mathbb{N}$ large enough, we split the non-negative subgraph of $\rho_0$ in $N$ regions of measure $\frac{1}{N}$ as follows:
\begin{subequations}\label{eq:discretisation_scheme}
\begin{align}
    &\bar{x}_0=\bar{x}_{min},\\
    &\bar{x}_i:=\sup\left\{x\in\R : \int_{\bar{x}_{i-1}}^xd\rho_0(y)<\frac{1}{N}\right\}, \qquad i=1,...,N.
\end{align}
\end{subequations}
Clearly, we have $\bar{x}_N=\bar{x}_{max}$. Repeating the same procedure for the non-negative subgraph of $\eta_0$ we obtain $\bar{y}_0,...,\bar{y}_N$. Then, we solve system \eqref{eq:ode-sys-newtonian}, for $i=1,\ldots,N$, with $(\bar{x}_1,...,\bar{x}_N,\bar{y}_1,...,\bar{y}_N)$ as initial condition. The choice of discarding the two particles labelled by $i=0$ is dictated by the need of having exactly $N$ particles each one with mass $1/N$.

In view of the results shown in Section \ref{sec:grad_flow}, we know there exists a unique solution $Z(t)=(x(t),y(t))\in \mathcal{C}^{N}\times \mathcal{C}^{N}$ in the sense of Definition \ref{def:discrete_gradflow} with support contained in the interval $[a(0),b(0)]$ for all times, with $a(0)=\min\{\bar{x}_0,\bar{y}_0\}$ and $b(0)=\max\{\bar{x}_N,\bar{y}_N\}$. Now, we consider the piecewise constant densities
\begin{align}\label{eq:piecewise-const-densities}
&\tilde{\rho}^N(t,x):=\sum_{i=0}^{N-1}d_i^1(t)\chi_{[x_i(t),x_{i+1}(t))}(x), \qquad \tilde{\eta}^N(t,x):=\sum_{i=0}^{N-1}d_i^2(t)\chi_{[y_i(t),y_{i+1}(t))}(x),
\end{align}
where
\begin{align}
\label{eq:discrete_densities}
    d_i^1(t)=\frac{1}{N(x_{i+1}(t)-x_i(t))}, \qquad \text{and}\qquad d_i^2(t)=\frac{1}{N(y_{i+1}(t)-y_i(t))}
\end{align}
are discrete Lagrangian version of the densities. Note that $d_i^1$ and $d_i^2$ are well-defined since particles of  the same species cannot collide, as proven in Section \ref{sec:properties}, Theorem \ref{thm:collisions_same_species}.  Moreover, we also consider the empirical measures 
\begin{equation}\label{eq:empirical_measures_sec_5}
\rho^N(t)=\frac{1}{N}\sum_{i=1}^{N}\delta_{x_i(t)}, \quad \text{and} \quad \eta^N(t)=\frac{1}{N}\sum_{j=1}^{N}\delta_{y_j(t)}.
\end{equation}
We notice that $\rho^N, \eta^N, \tilde{\rho}^N, \tilde{\eta}^N$ are probability measures with compact support. Moreover,  $(\tilde{\rho}^N(t),\tilde{\eta}^N(t))$ belong to $\mptra\times\mptra$.

Both $(\rho^N,\eta^N)$ and $(\tilde{\rho}^N,\tilde{\eta}^N)$ are useful representations of the particle system $x_1,\ldots,x_N,y_1,\ldots,y_N$ for large $N$. In fact, one can prove that these two sequences in $\mptr\times\mptr$ converge, up to a subsequence, to same limit in the $p$-Wasserstein distance for all $p\in [1,+\infty)$. 

\begin{lem}\label{lem:weak_measure_convergence}
Let $p\in [1,+\infty)$. There exists a sequence $(N_k)_k\subset \mathbb{N}$ and an absolutely continuous curve
\[(\rho(\cdot),\eta(\cdot))\in AC([0,T]\,;\,\,\mathcal{P}_p(\R)\times \mathcal{P}_p(\R))\]
such that 
\begin{align*}
    & (\rho^{N_k},\eta^{N_k})\rightarrow (\rho,\eta) \qquad \hbox{in}\ \  C([0,T]\,;\,\,\mathcal{P}_p(\R)\times \mathcal{P}_p(\R))\\
    & (\tilde{\rho}^{N_k},\tilde{\eta}^{N_k})\rightarrow (\rho,\eta) \qquad \hbox{in}\ \  C([0,T]\,;\,\,\mathcal{P}_p(\R)\times \mathcal{P}_p(\R))
\end{align*}
as $k\rightarrow +\infty$.
\end{lem}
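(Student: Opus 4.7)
The plan is to combine a refined Arzelà-Ascoli argument in $C([0,T];\mathcal{P}_p(\R)^2)$ with a direct $W_p$-estimate comparing the empirical measures against their piecewise-constant reconstructions, both relying crucially on the uniform confinement in Proposition \ref{prop:support_solution}.

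First I would establish equi-Lipschitz continuity in time. Reading off \eqref{eq:ode-sys-newtonian}, each velocity is a signed sum of at most $2N-1$ terms of absolute value $1/N$, so $|\dot x_i(t)|, |\dot y_j(t)| \le 2$ for a.e.\ $t\in[0,T]$, uniformly in $N$ and in the particle index. Using the comonotone coupling $\frac{1}{N}\sum_i \delta_{(x_i(s),x_i(t))}$, which is optimal in one dimension, I get
\[
W_p^p(\rho^N(t),\rho^N(s)) \le \frac{1}{N}\sum_{i=1}^N |x_i(t)-x_i(s)|^p \le 2^p|t-s|^p,
\]
so that $t\mapsto\rho^N(t)$ is $2$-Lipschitz in $W_p$, uniformly in $N$, and analogously for $\eta^N$.

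Next, by Proposition \ref{prop:support_solution}, all particles remain inside the fixed compact $[a(0),b(0)]$ for every $t\in[0,T]$, so the families $\{\rho^N(t)\}_{N,t}$ and $\{\eta^N(t)\}_{N,t}$ are uniformly tight with uniformly bounded $p$-th moments, hence pointwise-in-$t$ relatively compact in $(\mathcal{P}_p(\R),W_p)$. A refined Arzelà-Ascoli theorem for curves with values in a metric space (e.g.\ \cite[Proposition 3.3.1]{AGS}) then produces a subsequence $(N_k)_k$ and a limit curve $(\rho,\eta)\in C([0,T];\mathcal{P}_p(\R)^2)$ such that $(\rho^{N_k},\eta^{N_k})\to(\rho,\eta)$ uniformly in $W_p$ on $[0,T]$. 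Uniform convergence preserves the Lipschitz constant, so the limit is also $2$-Lipschitz in $W_p$, hence absolutely continuous.

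Finally, I would compare $\rho^N(t)$ with $\tilde{\rho}^N(t)$ via the pseudo-inverse identity \eqref{eq:equiv-1-wass}. The pseudo-inverse of $\rho^N(t)$ is the step function taking value $x_i(t)$ on $[(i-1)/N,i/N)$, whereas that of $\tilde{\rho}^N(t)$ is the piecewise linear interpolant of the ordered positions; a direct computation therefore gives
\[
W_p^p(\rho^N(t),\tilde{\rho}^N(t)) = \frac{1}{N(p+1)}\sum_{i=0}^{N-1}(x_{i+1}(t)-x_i(t))^p.
\]
Setting $D:=b(0)-a(0)$, the telescoping estimate $\sum_i (x_{i+1}-x_i)^p \le D^{p-1}\sum_i (x_{i+1}-x_i) \le D^p$ (again via Proposition \ref{prop:support_solution}) yields $W_p(\rho^N(t),\tilde{\rho}^N(t)) \le D/(N(p+1))^{1/p}$ uniformly in $t\in[0,T]$, so $\tilde{\rho}^{N_k}\to \rho$ in $C([0,T];\mathcal{P}_p(\R))$ along the same subsequence, and analogously $\tilde{\eta}^{N_k}\to \eta$. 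The only subtle point is the uniform support bound provided by Proposition \ref{prop:support_solution}, without which neither the pointwise compactness step nor the telescoping estimate would give uniformity in $t$; the remaining steps are essentially standard.
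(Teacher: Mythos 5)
Your proof is correct and follows essentially the same strategy as the paper: establish a uniform-in-$N$ Lipschitz bound in $W_p$ via the bounded particle velocities, use Prokhorov plus the compact-support bound of Proposition \ref{prop:support_solution} for pointwise relative compactness, invoke the metric-space Arzel\`a--Ascoli theorem (\cite[Proposition 3.3.1]{AGS}), and then show $W_p(\rho^N(t),\tilde\rho^N(t))\to 0$ uniformly in $t$. Two small technical differences are worth noting: you obtain the velocity bound $|\dot x_i|\le 2$ directly from the ODE structure (valid at all but the finitely many collision times), whereas the paper invokes the weighted subdifferential bound of Proposition \ref{prop:uniform_subdiff}; your observation is actually the sharper justification, since the weighted $\ell^2$ bound of Proposition \ref{prop:uniform_subdiff} does not by itself give componentwise control. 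For the comparison of $\rho^N$ with $\tilde\rho^N$, you compute $W_p$ exactly via the pseudo-inverses and get $W_p^p=\tfrac{1}{N(p+1)}\sum_i(x_{i+1}-x_i)^p$, while the paper exhibits a specific coupling (which in fact coincides with the monotone one) and bounds the cost by H\"older; both give the same $O(N^{-1/p})$ rate, but your route is a little cleaner and avoids a small exponent slip in the paper's displayed H\"older step. Neither the claim of optimality for your coupling in the Lipschitz step nor the exactness of the pseudo-inverse formula is actually needed; upper bounds suffice throughout.
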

\begin{proof}
Let us fix $T\geq 0$. The results in Proposition \ref{prop:support_solution} and  Proposition \ref{prop:uniform_subdiff} imply that 
\[
    \|Z(\cdot)\|_{L^\infty([0,T]\,;\,\R^N\times\R^N)}+\|\dot{Z}(\cdot)\|_{L^\infty([0,T]\,;\,\R^N\times\R^N)}\leq C,
\]
for some $C>0$ only depending on the initial support. The estimate on $\|Z(t)\|_\infty$ implies that all $q$-moments of $\rho^N$ and $\eta^N$ are uniformly bounded with respect to $N$, uniformly on $t\in [0,T]$, for $q\in [1, \infty)$. 

Therefore we may infer that both $\rho^N$ and $\eta^N$ are contained in a pre-compact subset of $\mathcal{P}_p(\R)$, for all times $t\in[0,T]$, by Prokhorov's theorem and the uniform bounds on the $q$-moments, with $q>p$. Now, for $0\leq s<t\leq T$, we set $\pi_{s,t}\in\mathcal{P}(\R\times\R)$ as
\[\pi_{s,t}^N(x,y):=\frac{1}{N}\sum_{i=1}^N \delta_{x_i(s)}(x)\otimes \delta_{x_i(t)}(y).\]
It is easily seen that $\pi_{s,t}^N$ has marginal measures $\rho^N(s)$ in the $x$-variable and $\rho^N(t)$ in the $y$-variable respectively. Hence, 
\begin{align*}
    \mW_p(\rho^N(s),\rho^N(t))^p & \leq \iint_{\R\times\R}|x-y|^p d\pi^N_{s,t}(x,y)=\frac{1}{N}\sum_{i=1}^N\iint_{\R\times\R}|x-y|^p d\delta_{x_i(s)}(x) d\delta_{x_i(t)}(y)\\
    & = \frac{1}{N}\sum_{i=1}^N |x_i(s)-x_i(t)|^p = \frac{1}{N}\sum_{i=1}^N\left| \int_s^t \dot{x}_i(\tau) d\tau \right|^p,
\end{align*}
and the above estimate on $\|\dot{Z}\|$ implies
\[\mW_p(\rho^N(s),\rho^N(t))^p\leq  \frac{C}{N}\sum_{i=1}^N |t-s|^p = C|t-s|^p,\]
for some constant $C>0$ that is independent of $N$. The latter estimate implies equi-continuity of the sequence $\{\rho^N\,:\,\,N\in \mathbb{N}\}$ in $C([0,T]\,;\,\mathcal{P}_p(\R))$, and clearly an analogous statement holds for $\eta^N$. Hence, the Arzelà-Ascoli's Theorem implies the existence of a subsequence $(\rho^{N_k},\eta^{N_k})$, $k\in \mathbb{N}$, such that 
\[
    (\rho^{N_k},\eta^{N_k})\rightarrow (\rho,\eta)\qquad \hbox{in $C([0,T]\,;\,\mathcal{P}_p(\R)\times\mathcal{P}_p(\R))$},
\]
as $k\rightarrow +\infty$, for some $(\rho,\eta)\in C([0,T]\,;\,\mathcal{P}_p(\R)\times\mathcal{P}_p(\R))$, see \cite[Proposition 3.3.1]{AGS}.

We now prove that the sequence $(\tilde{\rho}^{N_k},\tilde{\eta}^{N_k})$ converges to the same limit $(\rho,\eta)$ in the same topology $C([0,T]\,;\mP_p(\R)\times\mP_p(\R))$. For a fixed $N$, let $\pi^N\in C([0,T];\mathcal{P}(\R\times \R))$ be defined by
\[\pi^N(t;x,y) = \sum_{i=1}^N\delta_{x_i(t)}(x)\otimes \tilde{\rho}^N|_{[x_{i-1}(t),x_i(t))}(y).\]
A simple computation shows that $\pi^N$ has marginal measures $\rho^N$ in the $x$-variable and $\tilde{\rho}^N$ in the $y$-variable, respectively. Hence, for almost every $t\in [0,T]$,
\begin{align*}
    \mW_1(\rho^N(t),\tilde{\rho}^N(t)) &\leq \iint_{\R\times \R} |x-y|d\pi^N(x,y) = \sum_{i=1}^N \frac{1}{N(x_i(t)-x_{i-1}(t))}\int_{x_{i-1}(t)}^{x_i(t)}|x_i(t)-y|dy\\
    & = \frac{1}{2}\sum_{i=1}^N \frac{1}{N(x_i(t)-x_{i-1}(t))}(x_i(t)-x_{i-1}(t))^2 =(x_N(t)-x_0(t))\frac{1}{2N}\\
    &\leq (\bar{x}_N-\bar{x}_0)\frac{1}{2N}
\end{align*}
and the assertion is proven for $p=1$ by taking the supremum on $t\in [0,T]$ and using that $\bar{x}_N-\bar{x}_0=\bar{x}_{\max}-\bar{x}_{\min}$. Note that $\supp\,\rho^N,\supp\,\tilde{\rho}^N\subseteq[a(0),b(0)]$, hence
\begin{align*}
    \mW_p(\rho^N(t),\tilde{\rho}^N(t))&\leq \left(\iint_{[a(0),b(0)]^2} |x-y|^p d\pi^N(x,y)\right)^\frac{1}{p}\\
    &\leq\left(b(0)-a(0)\right)^{\frac{p-1}{p}}\iint_{\R\times \R} |x-y|d\pi^N(x,y)\\
    &\leq\left(b(0)-a(0)\right)^{\frac{p-1}{p}}(\bar{x}_N-\bar{x}_0)\frac{1}{2N},
\end{align*}
which gives the result for $p\in[1,+\infty)$ by taking again the supremum on $t\in[0,T]$ and letting $N\to \infty$.
\end{proof}

We now establish the basic properties satisfied by the $N$-particle approximation of the initial data $\rho_0,\eta_0$. In order to simplify the notation, we denote
\[(\tilde{\rho}^N,\tilde{\eta}^N)_{|_{t=0}} =(\tilde{\rho}_0^N,\tilde{\eta}_0^N)\,,\qquad (\rho^N,\eta^N)_{|_{t=0}} =(\rho^N_0,\eta^N_0).\]
\begin{prop}\label{prop:sequence_initial}
The two sequences $\{(\tilde{\rho}^N_0,\tilde{\eta}^N_0)\}_{n\in\mathbb{N}}$ and $\{(\rho^N_0,\eta^N_0)\}_{n\in\mathbb{N}}$ converge to the initial datum $(\rho_0,\eta_0)$ with respect to $\mW_1$. Moreover, assume that there exists a convex, non-decreasing function $G:[0,+\infty)\rightarrow [0,+\infty)$ with $G(0)=0$ and $\lim_{r\rightarrow +\infty}\frac{G(r)}{r}=+\infty$ such that both $G(\rho_0)$ and $G(\eta_0)$ belong to $L^1(\R)$. Then, the quantity
\[\int_\R G(\tilde{\rho}_0^N(x)) dx + \int_\R G(\tilde{\eta}_0^N(x)) dx \]
is uniformly bounded with respect to $N$.
\end{prop}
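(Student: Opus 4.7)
The plan is to exploit the one-dimensional characterisation $W_1(\mu,\nu)=\|F_\mu-F_\nu\|_{L^1(\R)}$ recalled in the preliminaries, together with the fact that the atomisation scheme \eqref{eq:discretisation_scheme} is designed precisely so that $F_{\rho_0}$ hits the values $i/N$ at the nodes $\bar x_i$. First I would observe that, since $\rho_0\in\mptra$, the function $F_{\rho_0}$ is continuous, which forces
\[
\int_{\bar x_{i-1}}^{\bar x_i} d\rho_0 = \frac{1}{N},
\]
exactly for every $i=1,\ldots,N$; the $\sup$ in \eqref{eq:discretisation_scheme} merely absorbs possible flat regions of $F_{\rho_0}$, i.e.\ zero sets of $\rho_0$. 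Consequently $F_{\rho_0}(\bar x_i)=i/N$ for $i=0,\ldots,N$, and the analogous statement holds for $F_{\eta_0}$ at the nodes $\bar y_i$.

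For the convergence in $\mW_1$, I would then note that $F_{\rho^N_0}$ is the step function taking value $i/N$ on $[\bar x_i,\bar x_{i+1})$, while $F_{\tilde\rho^N_0}$ is the piecewise affine interpolant joining $(\bar x_i,i/N)$ to $(\bar x_{i+1},(i+1)/N)$. Since $F_{\rho_0}$ is non-decreasing and ranges in $[i/N,(i+1)/N]$ on the same cell, one has pointwise
\[
|F_{\rho_0}(x)-F_{\rho^N_0}(x)|\le \tfrac{1}{N}, \qquad |F_{\rho_0}(x)-F_{\tilde\rho^N_0}(x)|\le \tfrac{1}{N},
\]
and both differences vanish outside $[\bar x_0,\bar x_N]$. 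Integrating yields
\[
W_1(\rho^N_0,\rho_0),\; W_1(\tilde\rho^N_0,\rho_0) \le \frac{\bar x_{\max}-\bar x_{\min}}{N},
\]
and the analogous estimates for $\eta_0$ give the $\mW_1$-convergence.

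For the uniform $L^1$-bound on $G(\tilde\rho^N_0)$, the key observation is that on each cell $\tilde\rho^N_0$ is precisely the average of $\rho_0$:
\[
\tilde\rho^N_0(x) \equiv \frac{1/N}{\bar x_{i+1}-\bar x_i} = \frac{1}{\bar x_{i+1}-\bar x_i}\int_{\bar x_i}^{\bar x_{i+1}}\rho_0(y)\,dy, \qquad x\in[\bar x_i,\bar x_{i+1}),
\]
using once more that each cell carries $\rho_0$-mass $1/N$. Applying Jensen's inequality to the convex function $G$ and integrating over the cell yields
\[
\int_{\bar x_i}^{\bar x_{i+1}} G(\tilde\rho^N_0(x))\,dx \le \int_{\bar x_i}^{\bar x_{i+1}} G(\rho_0(y))\,dy,
\]
which, summed on $i=0,\ldots,N-1$, telescopes into $\int_\R G(\tilde\rho^N_0)\,dx \le \int_\R G(\rho_0)\,dy<+\infty$; the analogous bound holds for $\tilde\eta^N_0$.

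The proposition carries no genuine obstacle: both claims are direct consequences of the quantile nature of \eqref{eq:discretisation_scheme}. The only mildly delicate point is the exact identity $\int_{\bar x_{i-1}}^{\bar x_i}d\rho_0=1/N$ in the presence of intervals where $\rho_0$ vanishes, but this is settled by continuity of $F_{\rho_0}$ inherited from the absolute-continuity assumption on the initial data.
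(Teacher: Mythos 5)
Your proof is correct. For the uniform bound on $\int_\R G(\tilde\rho^N_0)\,dx$, you use the same cell-by-cell Jensen inequality the paper employs; the argument is word-for-word identical in substance.

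For the $\mW_1$-convergence, the paper simply refers the reader to the coupling technique of Lemma \ref{lem:weak_measure_convergence} (constructing a plan of the form $\sum_i \delta_{\bar x_i}\otimes\rho_0|_{[\bar x_{i-1},\bar x_i)}$ and estimating cell-by-cell), whereas you invoke the one-dimensional characterisation $W_1(\mu,\nu)=\|F_\mu-F_\nu\|_{L^1(\R)}$ and obtain the pointwise bound $|F_{\rho_0}-F_{\rho^N_0}|,|F_{\rho_0}-F_{\tilde\rho^N_0}|\le 1/N$ on each cell. Both routes exploit the quantile structure $F_{\rho_0}(\bar x_i)=i/N$ and deliver the same $O(1/N)$ rate with the same constant $\bar x_{\max}-\bar x_{\min}$; yours is arguably slightly more direct since it avoids constructing a coupling explicitly. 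One small imprecision in your narrative: the $\sup$ in \eqref{eq:discretisation_scheme} selects the \emph{left} endpoint of any flat region of $F_{\rho_0}$ (the first $x$ where the accumulated cell mass reaches $1/N$, since the set $\{F(x)-F(\bar x_{i-1})<1/N\}$ is open on the right), so a flat region is absorbed into the \emph{next} cell, not the current one. This does not affect your proof, which only uses $F_{\rho_0}(\bar x_i)=i/N$ and the exact cell mass $1/N$, both of which still hold.
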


\begin{proof}
The $1$-Wasserstein convergence of the initial data relies on the techniques adopted in the proof of Lemma \ref{lem:weak_measure_convergence} and is therefore left to the reader. In order to prove the last property, we compute
\begin{align*}
    \int_\R G(\tilde{\rho}_0^N(x)) dx &= \sum_{i=0}^{N-1}\int_{\bar{x}_i}^{\bar{x}_{i+1}} G(d_i^1(0)) dx=\sum_{i=0}^{N-1}\int_{\bar{x}_i}^{\bar{x}_{i+1}} G\left(\frac{1}{N(\bar{x}_{i+1}-\bar{x}_i)}\right) dx \\
    &= \sum_{i=0}^{N-1}\int_{\bar{x}_i}^{\bar{x}_{i+1}} G\left(\avint_{\!\!\bar{x}_i}^{\bar{x}_{i+1}} \rho_0(y) dy \right) dx.
\end{align*}
By Jensen's inequality, we get
\[\int_\R G(\tilde{\rho}_0^N(x)) dx \leq \sum_{i=0}^{N-1}\int_{\bar{x}_i}^{\bar{x}_{i+1}}\frac{1}{\bar{x}_{i+1}-\bar{x}_i}\int_{\bar{x}_i}^{\bar{x}_{i+1}} G(\rho_0(y)) dy dx \leq \int_{\R}G(\rho_0(y)) dy,\]
which proves the assertion for $\tilde{\rho}_0^N$. The assertion for $\tilde{\eta}_0^N$ is proven in the same way.
\end{proof}

The convergence of $(\tilde{\rho}^N,\tilde{\eta}^N)$ and $(\rho^N,\eta^N)$ to $(\rho,\eta)$ proven in Lemma \ref{lem:weak_measure_convergence} alone is too weak to prove that $(\rho,\eta)$ is a gradient flow solution of the continuum model \eqref{eq:pde-sys-newtonian} in the sense of \cite{CDFEFS}. This is due to the discontinuity of the gradient $\nabla N$, which does not allow for the coupling with a singular measure in the mixed interaction terms of \eqref{eq:ode-sys-newtonian}. In order to bypass this problem, we argue as follows. Assuming the initial data $\rho_0,\eta_0$ belong to $L^m(\R)$ for $m\geq 1$, we aim at proving that the approximating sequences $\tilde{\rho}^N$ and $\tilde{\eta}^N$ are uniformly bounded in $L^m(\R)$, which implies weak $L^m$ compactness (note that the case $m=1$ is shown separately) and therefore the possibility to pass to the limit under the integral sign using the pairing between an absolutely continuous measure and a discontinuous test function.

\begin{prop}\label{prop:convergence_weak_Lm}
Let us consider $\rho_0,\eta_0\in\mptra\cap L^m(\R)$, for some  $m\in(1,+\infty]$. Then, the piecewise constant densities $\tilde{\rho}^N$ and $\tilde{\eta}^N$ have a weakly (resp. weakly star) convergent subsequence in $L_{loc}^m([0,+\infty)\times \R)$ for finite $m$ (infinite $m$ resp.) to $\rho$ and $\eta$ respectively. Moreover, $\rho$ and $\eta$ belong to $C([0,+\infty);\,L^m(\R))$.
\end{prop}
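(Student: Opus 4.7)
The plan is to establish a uniform in $N$ and $t$ bound on $\|\tilde{\rho}^N(t)\|_{L^m}+\|\tilde{\eta}^N(t)\|_{L^m}$ via a discrete $L^m$-estimate, then extract weakly (resp.\ weak-$\star$) convergent subsequences and identify the limits with $\rho,\eta$ from Lemma \ref{lem:weak_measure_convergence}.

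First I would compute the time derivative of $\|\tilde{\rho}^N\|_{L^m}^m$ directly. Writing $\ell_i:=x_{i+1}-x_i$ so that $d^1_i=1/(N\ell_i)$, one has $\|\tilde{\rho}^N\|_{L^m}^m=\sum_i N^{-m}\ell_i^{1-m}$. By Theorem \ref{thm:collisions_same_species} particles of the same species never collide, and by Corollary \ref{cor:collisions} cross-species collisions occur pairwise and at isolated times. Hence for a.e.\ $t$ one may write $\dot\ell_i=\tfrac{2}{N}(1-n_i^y)$, where $n_i^y:=\#\{j:y_j\in(x_i,x_{i+1})\}$. A short computation then yields
\[
\frac{d}{dt}\|\tilde{\rho}^N\|_{L^m}^m = -2(m-1)\|\tilde{\rho}^N\|_{L^{m+1}}^{m+1}+2(m-1)\int_\R(\tilde{\rho}^N)^m\,d\eta^N,
\]
together with the analogous identity in which the roles of $\rho$ and $\eta$ are swapped.

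Second I would close the estimate by summing the two identities and bounding the cross terms. In the continuum setting this is clean: using the elementary pointwise inequality $a^{m+1}+b^{m+1}\geq a^m b+a\,b^m$ (which follows from $(a-b)(a^m-b^m)\geq 0$) one obtains $\int\rho^m\eta+\int\eta^m\rho\leq \|\rho\|_{L^{m+1}}^{m+1}+\|\eta\|_{L^{m+1}}^{m+1}$. At the particle level this must be adapted because the cross terms pair the piecewise constant integrand $(\tilde{\rho}^N)^m$ against the \emph{empirical} measure $\eta^N$ rather than against $\tilde{\eta}^N\,dx$. Using the elementary bound $|F_{\eta^N}-F_{\tilde{\eta}^N}|\leq 1/N$ and integration by parts for piecewise constant test functions, the discrepancy between the two pairings reduces to discrete boundary corrections at the $x$-interval endpoints. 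Controlling these corrections by the available negative $L^{m+1}$ contribution would yield
\[
\frac{d}{dt}\bigl[\|\tilde{\rho}^N\|_{L^m}^m+\|\tilde{\eta}^N\|_{L^m}^m\bigr]\leq 0.
\]
Combined with Proposition \ref{prop:sequence_initial} applied to $G(r)=r^m$, this provides a uniform in $N$ and $t$ bound on the $L^m$-norms of the reconstructions. The case $m=+\infty$ follows by letting $m\to\infty$.

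The uniform support of $\tilde{\rho}^N(t),\tilde{\eta}^N(t)$ in the fixed interval $[a(0),b(0)]$ from Proposition \ref{prop:support_solution}, combined with this uniform $L^m$-bound, implies boundedness of the sequences in $L^m_{loc}([0,+\infty)\times\R)$. For finite $m$ reflexivity yields a weakly convergent subsequence, and for $m=\infty$ the Banach--Alaoglu theorem produces a weakly-$\star$ convergent one; in either case the weak limit must agree with $\rho$ and $\eta$ by uniqueness of the weak limit together with the Wasserstein convergence in Lemma \ref{lem:weak_measure_convergence}. The continuity $\rho,\eta\in C([0,+\infty);L^m(\R))$ would then follow by combining the Wasserstein time-continuity of Lemma \ref{lem:weak_measure_convergence}, weak lower semicontinuity of the $L^m$-norm, and the gradient flow structure of \cite{CDFEFS}, which provides strong $L^m$-continuity of the unique limit.

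The main obstacle is closing the differential inequality in the second step. The continuum version is transparent because the pairing is against an absolutely continuous density, allowing a direct application of the inequality $a^{m+1}+b^{m+1}\geq a^m b+ab^m$. At the discrete level the pairing is against the singular empirical measure, and the discrepancy (an alternating sum over $x$-interval endpoints weighted by jumps of $(\tilde{\rho}^N)^m$) could a priori dominate the available negative $L^{m+1}$ term. Showing that this discretization error is in fact absorbed by the surplus of the repulsive $L^{m+1}$ contribution is the delicate technical point on which the whole estimate rests.
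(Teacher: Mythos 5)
Your opening computation is correct and agrees with the paper: differentiating $\|\tilde\rho^N\|_{L^m}^m+\|\tilde\eta^N\|_{L^m}^m$ gives
\[
\frac{d}{dt}\bigl(\|\tilde\rho^N\|_{L^m}^m+\|\tilde\eta^N\|_{L^m}^m\bigr)
= -\frac{2(m-1)}{N}\Bigl[\sum_i [d_i^1]^m(1-\alpha(i)) + \sum_j [d_j^2]^m(1-\beta(j))\Bigr],
\]
and your reinterpretation of the two pieces as $-2(m-1)\bigl(\|\tilde\rho^N\|_{L^{m+1}}^{m+1}+\|\tilde\eta^N\|_{L^{m+1}}^{m+1}\bigr)+2(m-1)\bigl(\int(\tilde\rho^N)^m\,d\eta^N+\int(\tilde\eta^N)^m\,d\rho^N\bigr)$ is arithmetically accurate. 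But the proof stops precisely where it becomes nontrivial, and you say as much: showing that the cross term is absorbed by the self term is ``the delicate technical point on which the whole estimate rests.'' That is a genuine gap, not a detail you can defer. Moreover, the route you sketch for closing it — apply the continuum pointwise inequality $a^{m+1}+b^{m+1}\geq a^m b+ab^m$ and then control the discrepancy between pairing against $\eta^N$ and pairing against $\tilde\eta^N\,dx$ via $|F_{\eta^N}-F_{\tilde\eta^N}|\leq 1/N$ — does not work as stated. The correction term produced by the integration by parts is $\frac{1}{N}\sum_i\bigl|(d_i^1)^m-(d_{i-1}^1)^m\bigr|$, and there is no a priori control on these jumps, so it is not dominated by the $L^{m+1}$ term. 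More fundamentally, the pointwise inequality $a^{m+1}+b^{m+1}\geq a^mb+ab^m$ compares $\tilde\rho^N$ and $\tilde\eta^N$ at the same spatial point, whereas the discrete cross term evaluates $\tilde\rho^N$ at the $y$-particle locations; the two quantities $\tilde\rho^N(y_j)$ and $\tilde\eta^N(y_j)$ have no useful pointwise relation in general, and treating the mismatch as a small ``boundary correction'' does not reflect the actual structure.

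What closes the estimate in the paper is a purely combinatorial interlacing argument, not a continuum-plus-error decomposition. Write the derivative as $A_1+A_2+A_3+A_4$ by splitting the sum over $i$ (resp. $j$) according to whether $\alpha(i)$ (resp. $\beta(j)$) is $0$, $1$, or $>1$; the $\alpha=1$ and $\beta=1$ terms vanish, $A_1$ and $A_3$ are the negative $\alpha=0$ and $\beta=0$ sums, and $A_2$, $A_4$ are the positive $\alpha>1$ and $\beta>1$ sums. The observation is this: if $\alpha(i)>1$, say $y_{\bar j},\dots,y_{\bar j+\alpha(i)-1}$ are the $y$-particles strictly inside $(x_i,x_{i+1})$, then for each of the $\alpha(i)-1$ intermediate indices $j\in\{\bar j,\dots,\bar j+\alpha(i)-2\}$ the interval $(y_j,y_{j+1})$ is strictly contained in $(x_i,x_{i+1})$; hence $\beta(j)=0$ (there is no $x$-particle strictly between $y_j$ and $y_{j+1}$) and, crucially, $y_{j+1}-y_j\leq x_{i+1}-x_i$, i.e.\ $d_j^2\geq d_i^1$. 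This gives the \emph{per-interval} bound $[d_i^1]^m(\alpha(i)-1)\leq\sum_j[d_j^2]^m$ with the sum ranging over exactly those $j$, and since distinct $x$-intervals with $\alpha(i)>1$ produce disjoint blocks of such $j$, summing over $i$ yields $A_2\leq -A_3$. By symmetry $A_4\leq -A_1$, so the whole derivative is $\leq 0$. No pointwise density comparison and no $L^{m+1}$ quantity is ever used; the mechanism is that clustering of $y$-particles inside an $x$-gap forces the $y$-spacings to be smaller and forces those $y$-gaps to be empty of $x$-particles. This is the piece your proposal is missing and that cannot be recovered from the pointwise inequality route you describe.
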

\begin{proof}
The proof is based on establishing $L^m$-bounds that are uniform in time and an application of Banach-Alaoglu theorem to obtain the weak-star compactness. Let us start by computing the time-derivative of the $L^m$ norm of the piecewise constant densities. In what follows, we use that particles of the same species do not cross, as proven in Theorem \ref{thm:collisions_same_species}. Moreover, the computation below is justified at times $t$ at which no particles of opposite species collide. As proven in Subsection \ref{subsec:collisions}, this only happens finitely many times on each fixed time interval $[0,T]$. Hence, for every fixed $T\geq0$ and for every but finitely many $t\in [0,T]$, we have
\begin{equation}
\label{eq:time-deriv-lm}
\begin{split}
    \frac{d}{dt} \left(\|\tilde \rho^N\|_m^m + \|\tilde \eta^N\|_m^m\right)
    &=\frac{d}{dt}\int_\R|\tilde{\rho}^N(t,x)|^m+|\tilde{\eta}^N(t,x)|^m\,dx\\
    &=-(m-1)\sum_{i=0}^{N-1}[d_i^1(t)]^m(\dot{x}_{i+1}(t)-\dot x_i(t))\\
    &\phantom{=\,} -(m-1)\sum_{j=0}^{N-1}[d_j^2(t)]^m(\dot{y}_{j+1}(t)-\dot y_j(t))\\
    &=-\frac{2(m-1)}{N}\sum_{i=0}^{N-1}[d_i^1(t)]^m(1-\alpha(i))\\
    &\phantom{=\,}-\frac{2(m-1)}{N}\sum_{j=0}^{N-1}[d_j^2(t)]^m(1-\beta(j))
\end{split}
\end{equation}
where $\alpha,\beta:\mathbb{N}\to\mathbb{N}$ are defined by \[\alpha(i)=\#\{k:x_i<y_k<x_{i+1}\},\qquad  \beta(j)=\#\{k:y_j<x_k<y_{j+1}\}\] 
for $i,j=\{0,...,N-1\}$. Clearly, the two maps $\alpha$ and $\beta$ also depend on time, but we will omit such dependency for simplicity and assume we are considering the above computation between two consecutive collisions.

Our goal is to show $\frac{d}{dt} \left(\|\tilde \rho^N\|_m^m + \|\tilde \eta^N\|_m^m\right)\le0$, which gives the desired $L^m$-bound uniform in time. From the last line of \eqref{eq:time-deriv-lm} this is true if $\alpha(i),\beta(j)\le1$ for all $i,j=\{0,...,N-1\}$. Now, let us rewrite \eqref{eq:time-deriv-lm} as follows
\begin{align*}
    \frac{d}{dt} \left(\|\tilde \rho^N\|_m^m + \|\tilde \eta^N\|_m^m\right)
    &=-\frac{2(m-1)}{N}\sum_{i:\alpha(i)=0}[d_i^1(t)]^m\\
    &\quad +\frac{2(m-1)}{N}\sum_{i:\alpha(i)>1}[d_i^1(t)]^m(\alpha(i)-1)\\
    &\quad -\frac{2(m-1)}{N}\sum_{j:\beta(j)=0}[d_j^2(t)]^m\\
    &\quad +\frac{2(m-1)}{N}\sum_{j:\beta(j)>1}[d_j^2(t)]^m(\beta(j)-1)\\
    &=: A_1 + A_2 + A_3 + A_4.
\end{align*}

We notice that, in case $\alpha(i)>1$, then there exist exactly  $\alpha(i)$ particles of the $y$-species, say with indices  $\bar{j},\bar{j}+1,\ldots,\bar{j}+\alpha(i)-1$, which are posed strictly between $x_i$ and $x_{i+1}$. For each $j\in \{\bar{j},\bar{j}+1,\ldots,\bar{j}+\alpha(i)-2\}$ we must have $\beta(j)=0$ since there are no $x$-particles between $y_j$ and $y_{j+1}$ for all the intermediate particles $y_j$ except the last one with $j=\alpha(i)-1$. Hence, the number $\alpha(i)-1$ equals exactly the number of $y$-particles between $x_i$ and $x_{i+1}$ characterised by $\beta(j)=0$, and 
$A_2$ can be re-written as
\begin{align*}
    A_2 &=\frac{2(m-1)}{N }\sum_{i:\alpha(i)>1} [d_i^1(t)]^m(\alpha(i)-1)\\
    &= \frac{2(m-1)}{N} \sum_{i:\alpha(i)>1} \sum_{\substack{j:\beta(j)=0\\x_i<y_j<x_{i+1}}} [d_i^1(t)]^m\\
    &\leq\frac{2(m-1)}{N} \sum_{i:\alpha(i)>1} \sum_{\substack{j:\beta(j)=0\\x_i<y_j<x_{i+1}}} [d_j^2(t)]^m,
\end{align*}
where the last inequality is motivated by the fact that for any index $i$ in the sum we have $d_i^1(t)\le d_j^2(t)$ because $y_{\bar{j}+k}-y_{\bar{j}+k-1} \leq x_{i+1}-x_i$ for all $k\in \{1,\ldots,\alpha(i)-2\}$. Now, we claim that 
\begin{equation}\label{eq:claim}
   \sum_{i:\alpha(i)>1} \sum_{\substack{j:\beta(j)=0\\x_i<y_j<x_{i+1}}}[d_j^2(t)]^m = \sum_{j:\beta(j)=0}[d_j^2(t)]^m. 
\end{equation}
Indeed, the set of indexes $\{j\,:\,\beta(j)=0\}$ can be split into a finite number $k$ of sets $I_1,\ldots, I_k$, with $I_i\cap I_j=\emptyset$ if $i\neq j$, with each $I_\ell$ made up by $h_\ell$ consecutive elements, say of the form $I_\ell=\{\bar{j},\ldots,\bar{j}+h_\ell-1\}$. Without restriction, we can assume that the sets $I_\ell$ are maximal with respect to those properties, i. e. no union of any such $I_i\cup I_j$ with $i\neq j$ is made up by consecutive indexes. In such configuration, for each $\ell\in \{1,\ldots,k\}$ we can detect a unique $i\in \{1,\ldots,N\}$ such that $x_i<y_j<x_{i+1}$ for all $j\in \{\bar{j},\ldots,\bar{j}+h_\ell-1\}$, and this implies $\ell=\alpha(i)$, which proves our previous claim \eqref{eq:claim}. As a consequence of \eqref{eq:claim}, we immediately get $A_2+A_3 \leq 0$. Arguing in a similar way we also get $A_1 + A_4 \leq 0$, which gives $\frac{d}{dt}\left(\|\tilde \rho^N\|_m^m + \|\tilde \eta^N\|_m^m\right) \leq 0$ on each time interval between two consecutive collisions, whence
$$
    \|\tilde{\rho}^N\|_{L^\infty([0,T]; L^m(\R))} + \|\tilde \eta^N\|_{L^\infty([0,T]; L^m(\R))} \leq \|\tilde{\rho}^N(\cdot,0)\|_{L^m(\R)} + \|\tilde{\eta}^N(\cdot,0)\|_{L^m(\R)}.
$$
The last estimate can be extended to the case $m=+\infty$ as we observe
\begin{align*}
     & \|\tilde{\rho}^N(\cdot,t)\|_{L^\infty(\R)}+\|\tilde{\eta}^N(\cdot,t)\|_{L^\infty(\R)}\leq \limsup_{m\rightarrow +\infty}\left[\|\tilde{\rho}^N(\cdot,t)\|_{L^m(\R)}+\|\tilde{\eta}^N(\cdot,t)\|_{L^m(\R)}\right]\\
      &\  \leq \limsup_{m\rightarrow +\infty}\left[\|\tilde{\rho}^N(\cdot,0)\|_{L^m(\R)} + \|\tilde{\eta}^N(\cdot,0)\|_{L^m(\R)}\right]\\
      &\ \leq \limsup_{m\rightarrow +\infty}\left[\|\tilde{\rho}^N(\cdot,0)\|_{L^\infty(\R)}^{\frac{m-1}{m}}\|\tilde{\rho}^N(\cdot,0)\|_{L^1(\R)}^{\frac{1}{m}}+\|\tilde{\eta}^N(\cdot,0)\|_{L^\infty(\R)}^{\frac{m-1}{m}}
      \|\tilde{\eta}^N(\cdot,0)\|_{L^1(\R)}^{\frac{1}{m}}\right]\\
      &\  = \|\tilde{\rho}^N(\cdot,0)\|_{L^\infty(\R)} + \|\tilde{\eta}^N(\cdot,0)\|_{L^\infty(\R)}.
\end{align*}
Therefore, due to Proposition \ref{prop:sequence_initial} with $G(r)=r^m$, the sequences $\{\tilde{\rho}^N\}_{N\in\mathbb{N}}$ and $\{\tilde{\eta}^N\}_{N\in\mathbb{N}}$ are uniformly bounded in $L_{loc}^\infty([0,+\infty);L^m(\R))$. By weak compactness, if $m<+\infty$ there exists a subsequence for each of them converging weakly in $L^m_{loc}([0,\infty)\times\R)$ to some limits $\rho',\eta'\in L_{loc}^m([0,+\infty)\times \R)$, respectively. In the case of $m=+\infty$ the above subsequence converges in the weak-$\star$ topology of $L_{loc}^\infty([0,+\infty)\times \R)$. In view of Lemma \ref{lem:weak_measure_convergence}, the limits $\rho'$ and $\eta'$ coincide with $\rho$ and $\eta$ respectively. The last statement follows by weak lower semi-continuity of the $L^m$ norm.
\end{proof}

The above weak compactness can be stretched to the $m=1$ case.
\begin{prop}\label{cor:dunford}
Let us consider $\rho_0,\eta_0 \in \mptr\cap L^1(\R)$. Then $\tilde{\rho}^N$ and $\tilde{\eta}^N$ converge weakly (up to a subsequence) in $L^1_{loc}([0,+\infty)\times\R)$ to $\rho$ and $\eta$ respectively. Consequently, $\rho$ and $\eta$ belong to $L^\infty([0,+\infty);\,L^1(\R))$.
\end{prop}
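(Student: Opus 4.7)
Proposition \ref{prop:convergence_weak_Lm} fails at $m=1$ since bounded sets in $L^1$ are not relatively weakly compact. I would instead combine the de la Vall\'ee--Poussin criterion with the Dunford--Pettis theorem. Since $\rho_0,\eta_0\in L^1(\R)$, de la Vall\'ee--Poussin supplies a convex, non-decreasing function $G:[0,+\infty)\to[0,+\infty)$ with $G(0)=0$ and superlinear growth at infinity such that $G(\rho_0),G(\eta_0)\in L^1(\R)$. Proposition \ref{prop:sequence_initial} applied to this $G$ then yields the uniform-in-$N$ bound $\int_\R G(\tilde\rho_0^N)\,dx+\int_\R G(\tilde\eta_0^N)\,dx\le C_0$.

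The heart of the argument is propagating this bound to positive times. Adapting the computation of Proposition \ref{prop:convergence_weak_Lm} to a generic convex $G$ with $G(0)=0$, using $d_i^1(x_{i+1}-x_i)=1/N$ (and its analogue for $y$) to eliminate the $\dot d_i^1$ terms, one obtains, between two consecutive collisions,
\begin{equation*}
    \frac{d}{dt}\!\left[\int_\R G(\tilde\rho^N)\,dx+\int_\R G(\tilde\eta^N)\,dx\right] = -\frac{2}{N}\sum_{i=0}^{N-1}(1-\alpha(i))\,\Phi(d_i^1)-\frac{2}{N}\sum_{j=0}^{N-1}(1-\beta(j))\,\Phi(d_j^2),
\end{equation*}
where $\Phi(d):=dG'(d)-G(d)$ satisfies $\Phi\ge 0$ and $\Phi'(d)=dG''(d)\ge 0$, hence is non-decreasing. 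The combinatorial rearrangement in the proof of Proposition \ref{prop:convergence_weak_Lm}, which bounded the positive contributions $(\alpha(i)-1)[d_i^1]^m$ by the negative contributions $[d_j^2]^m$ on nested $y$-subintervals using $d_i^1\le d_j^2$ and monotonicity of $r\mapsto r^m$, carries over verbatim with $\Phi$ in place of the power function. Since the piecewise-constant densities depend only on particle positions, the $G$-integrals are continuous across the finitely many collisions on $[0,T]$; hence $\int G(\tilde\rho^N(t))+\int G(\tilde\eta^N(t))\le C_0$ uniformly in $t\ge 0$ and $N$.

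Fix now $T>0$ and a compact $K\subset\R$. Proposition \ref{prop:support_solution} (support confinement) together with the mass normalisation $\|\tilde\rho^N(t)\|_1=\|\tilde\eta^N(t)\|_1=1$ gives uniform $L^1([0,T]\times K)$-boundedness, while the bound on $\int G$ yields equi-integrability via the sufficiency part of de la Vall\'ee--Poussin. Dunford--Pettis then extracts a subsequence converging weakly in $L^1([0,T]\times K)$, and a diagonal argument in $T$ and $K$ upgrades this to weak convergence in $L^1_{\mathrm{loc}}([0,+\infty)\times\R)$. Testing against $\phi\in C_c^\infty([0,+\infty)\times\R)$ and using the narrow convergence from Lemma \ref{lem:weak_measure_convergence} together with dominated convergence in $t$, one identifies the weak $L^1$ limits with $\rho,\eta$. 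Finally, since weak $L^1$ convergence preserves integrals and each $\tilde\rho^N(t)$ is a probability density, $\rho,\eta\in L^\infty([0,+\infty);L^1(\R))$ with unit mass a.e.

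The chief technical point — and the only substantive adaptation — is verifying that the combinatorial cancellation of Proposition \ref{prop:convergence_weak_Lm}, which exploits the specific algebraic form of $r\mapsto r^m$, extends to an arbitrary convex superlinear $G$ provided by de la Vall\'ee--Poussin. As noted above, this reduces to replacing $r^m$ throughout by the non-negative non-decreasing function $\Phi(d)=dG'(d)-G(d)$; everything else in the proof is structurally unchanged.
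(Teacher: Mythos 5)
Your proof is correct and follows essentially the same route as the paper: after obtaining $G$ from de la Vall\'ee--Poussin and the uniform bound at $t=0$ from Proposition \ref{prop:sequence_initial}, the paper likewise differentiates $\int G(\tilde\rho^N)+G(\tilde\eta^N)\,dx$, arrives at the same expression with $\Phi(d)=dG'(d)-G(d)$, observes that convexity of $G$ makes $\Phi$ non-decreasing, and then repeats the combinatorial argument of Proposition \ref{prop:convergence_weak_Lm} before invoking Dunford--Pettis and Lemma \ref{lem:weak_measure_convergence} to identify the limits. Your explicit remarks about $\Phi\ge 0$, continuity across collisions, and the diagonal extraction are small elaborations of details the paper leaves implicit, but the substance is identical.
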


\begin{proof}
By de la Vallée-Poussin's Theorem, there exists a non-decreasing, convex function $G:[0,+\infty)\rightarrow [0,+\infty)$ with $G(0)=0$ and $\lim_{r\rightarrow +\infty}\frac{G(r)}{r}=+\infty$ such that $G(\rho_0), G(\eta_0) \in L^1(\R)$. Hence, Proposition \ref{prop:sequence_initial} implies that both $G(\tilde{\rho}^N_0)$ and $G(\tilde{\eta}^N_0)$ are uniformly bounded in $L^1(\R)$. By repeating the proof of Proposition \ref{prop:convergence_weak_Lm} with $G(d_i^j)$ instead of $(d_i^j)^m$ with $j=1,2$ and $i=0,\ldots,N-1$, we easily get a uniform bound for
\[\|G(\tilde{\rho}^N)\|_{L^\infty(([0,+\infty); L^1(\R))} + \|G(\tilde \eta^N)\|_{L^\infty(([0,+\infty); L^1(\R))}.\]
In fact, by using the same notation of Proposition \ref{prop:convergence_weak_Lm} we get
\begin{align*}
    \frac{d}{dt}\int_\R G(\tilde\rho^N(t))+ G(\tilde\eta^N(t))\,dx&=\frac{d}{dt}\sum_{i=0}^{N-1}G(d_i^1(t))(x_{i+1}(t)-x_i(t))+\frac{d}{dt}\sum_{j=0}^{N-1}G(d_j^2(t))(y_{j+1}(t)-y_j(t))\\
    &=-\frac{2}{N}\sum_{i=0}^{N-1}G'(d_i^1(t))d_i^1(t)(1-\alpha(i)) + \frac{2}{N}\sum_{i=0}^{N-1}G(d_i^1(t))(1-\alpha(i))\\
    &\quad-\frac{2}{N}\sum_{j=0}^{N-1}G'(d_j^2(t))d_j^2(t)(1-\beta(j)) + \frac{2}{N}\sum_{j=0}^{N-1}G(d_j^2(t))(1-\beta(j))\\
    &=-\frac{2}{N}\sum_{i=0}^{N-1}[G'(d_i^1(t))d_i^1(t)-G(d_i^1(t))](1-\alpha(i))\\
    &\quad - \frac{2}{N}\sum_{j=0}^{N-1}[G'(d_j^2(t))d_j^2(t)-G(d_j^2(t))](1-\beta(j)).
\end{align*}
As mentioned above we can argue as in the proof of Proposition \ref{prop:convergence_weak_Lm} since $G$ is convex, hence the function $x\in(0,+\infty)\mapsto G'(x)x-G(x)$ is non-decreasing.
Therefore, by the de la Vall\'ee-Poussin's theorem, we may infer the equi-integrability of the sequences $\tilde{\rho}^N$ and $\tilde{\eta}^N$, and thus, by an application of the Dunford-Pettis theorem the two sequences are weakly compact in $L^1_{loc}([0,+\infty)\times \R)$. Hence, Lemma \ref{lem:weak_measure_convergence} implies that the limits $\rho'$ and $\eta'$ coincide with $\rho$ and $\eta$ respectively. The last statement follows by weak lower semi-continuity of the $L^1$ norm.
\end{proof}

The following technical lemma will be useful in the proof of our main result.
\begin{lem}\label{lem:technical}
For all $N\in \mathbb{N}$, let
\[\tilde{F}^N(x,t)=\int_{-\infty}^x \tilde{\rho}^N(y,t) dy\,,\qquad \tilde{H}^N(x,t)=\int_{-\infty}^x \tilde{\eta}^N(y,t) dy.\]
Then, the two families $\{\tilde{F}^N\}_{N\in \mathbb{N}}$ and $\{\tilde{H}^N\}_{N\in \mathbb{N}}$ are strongly compact in $L^1_{loc}(\R\times [0,+\infty))$.
\end{lem}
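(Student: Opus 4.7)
The plan is to apply the Kolmogorov--Riesz--Fr\'echet compactness criterion on an arbitrary bounded cylinder $Q_{R,T} = [-R, R] \times [0, T]$ to obtain strong $L^1$-compactness of $\{\tilde F^N\}_N$; the treatment of $\{\tilde H^N\}_N$ is identical. The uniform boundedness of the family in $L^\infty(Q_{R,T})$ is immediate since $\tilde F^N(x, t) \in [0, 1]$ for every $x, t, N$. It remains to verify equicontinuity of translations in both the spatial and temporal directions uniformly in $N$.

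For spatial translations, I exploit the fact that $\tilde F^N(\cdot, t)$ is non-decreasing with values in $[0, 1]$, hence of total variation at most $1$ on $\R$. The standard BV translation estimate gives
\[
\int_\R |\tilde F^N(x + h, t) - \tilde F^N(x, t)| \, dx \leq |h|,
\]
uniformly in $N$ and $t$, so integrating over $[0, T]$ yields the required spatial modulus. For temporal translations, I use the one-dimensional identity $W_1(\mu, \nu) = \|F_\mu - F_\nu\|_{L^1(\R)}$ from the preliminaries to convert the $L^1$ increment of $\tilde F^N$ in time into a Wasserstein increment:
\[
\int_\R |\tilde F^N(x, t + h) - \tilde F^N(x, t)|\, dx = W_1(\tilde \rho^N(t+h), \tilde \rho^N(t)).
\]
To estimate this uniformly in $N$, I pass to the pseudo-inverse using $W_1(\mu, \nu) = \|X_\mu - X_\nu\|_{L^1([0,1])}$. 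A direct computation shows that $X_{\tilde \rho^N(t)}$ is the continuous piecewise linear function on $[0, 1]$ interpolating the particle positions $x_0(t), \ldots, x_N(t)$ at the nodes $s = i/N$, so that for every $s \in [0,1]$
\[
|X_{\tilde \rho^N(t+h)}(s) - X_{\tilde \rho^N(t)}(s)| \leq \max_{0 \le i \le N} |x_i(t+h) - x_i(t)| \leq C |h|,
\]
by the uniform velocity bound on $\dot Z$ coming from Proposition \ref{prop:uniform_subdiff} (equivalently, from the elementary observation that each right-hand side in \eqref{eq:ode-sys-newtonian} is bounded in modulus by $2$). Integrating in $t$ produces the temporal modulus.

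Combining the two moduli, the Kolmogorov--Riesz--Fr\'echet theorem yields relative compactness of $\{\tilde F^N\}_N$ in $L^1(Q_{R,T})$, and a diagonal extraction over $R, T \to \infty$ upgrades this to strong compactness in $L^1_{loc}(\R \times [0, +\infty))$. The main technical subtlety is keeping the time modulus \emph{uniform} in $N$: at the empirical measure level one only controls $W_1(\rho^N(t+h), \rho^N(t))$ up to an $O(1/N)$ error from the comparison $W_1(\rho^N, \tilde \rho^N) = O(1/N)$, which is harmless for each fixed $N$ but not uniform as $h \to 0$. Working directly with the pseudo-inverse of $\tilde \rho^N$ avoids this entirely, since that pseudo-inverse inherits the same Lipschitz-in-$t$ regularity as the particle trajectories, with a constant independent of $N$.
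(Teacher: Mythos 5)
Your proof is correct. The treatment of the time modulus is essentially the paper's own argument: both pass through $W_1(\tilde\rho^N(t+h),\tilde\rho^N(t)) = \|X_{\tilde\rho^N(t+h)} - X_{\tilde\rho^N(t)}\|_{L^1([0,1])}$, observe that the pseudo-inverse of $\tilde\rho^N$ is the piecewise-linear interpolant of the particle positions at nodes $i/N$, and invoke the uniform velocity bound from Proposition~\ref{prop:uniform_subdiff}. Your pointwise bound $|X_{\tilde\rho^N(t+h)}(s)-X_{\tilde\rho^N(t)}(s)|\leq\max_i|x_i(t+h)-x_i(t)|$ (valid because on each cell the difference is a convex combination of the endpoint increments) is a slightly tidier way of getting the same $C|t-s|$ that the paper extracts by integrating the explicit linear-in-$s$ expression. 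Your closing remark about why one must work with $\tilde\rho^N$ rather than $\rho^N$ to avoid an $O(1/N)$ additive error is also exactly the point that makes the paper's argument go through.

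Where you genuinely diverge is the treatment of the spatial direction and the final compactness step. The paper establishes relative compactness of the values $\{\tilde F^N(\cdot,t)\}$ in $L^1(K)$ via the equi-integrability bound \eqref{eq:estimate2} on $G(\tilde F^N_x)$ (from Proposition~\ref{cor:dunford}, relying on de la Vall\'ee-Poussin) together with Dunford--Pettis, and then applies Arzel\`a--Ascoli to the curves $t\mapsto\tilde F^N(\cdot,t)$. You instead observe that $\tilde F^N(\cdot,t)$ is non-decreasing from $0$ to $1$, hence of total variation exactly $1$, so the elementary BV translation estimate gives $\|\tilde F^N(\cdot+h,t)-\tilde F^N(\cdot,t)\|_{L^1(\R)}\leq|h|$ uniformly in $N$ and $t$; combined with the time modulus, Kolmogorov--Riesz--Fr\'echet on each $Q_{R,T}$ finishes the job. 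Your route is more elementary and more self-contained: it makes no use of Proposition~\ref{cor:dunford}, needs no superlinear $G$, and exploits only the monotonicity of CDFs that is already built into the pseudo-inverse framework. The paper's route is heavier machinery but keeps the proof stylistically aligned with the equi-integrability/Dunford--Pettis arguments used for $\tilde\rho^N$ itself in Propositions~\ref{prop:convergence_weak_Lm} and \ref{cor:dunford}. Both are valid; yours trades some uniformity of toolkit for a cleaner, lower-tech argument.
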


\begin{proof}
Since both $\tilde{\rho}^N(\cdot,t)$ and $\tilde{\eta}^N(\cdot,t)$ have unit mass for all $t\geq 0$, we immediately get
\begin{equation}\label{eq:estimate1}
  \sup_{t\geq 0}\left[\|\tilde{F}^N(\cdot,t)\|_{L^\infty(\R)} + \|\tilde{H}^N(\cdot,t)\|_{L^\infty(\R)}\right] <+\infty.  
\end{equation}
Moreover, from the proof of Proposition \ref{cor:dunford} we easily obtain 
\begin{equation}\label{eq:estimate2}
    \sup_{t\geq 0}\left[\|G(\tilde{F}^N_x(t,\cdot))\|_{L^1(\R)} + \|G(\tilde{H}^N_x(t,\cdot))\|_{L^1(\R)}\right]<+\infty,
\end{equation}
where $G$ is a function as in the statement of Proposition \ref{prop:sequence_initial}, the existence of which is guaranteed by de la Vallée-Poussin's Theorem. Now, in order to estimate the oscillations in time, we aim at proving some uniform equi-continuity in time of the curve $t\mapsto (\tilde{\rho}^N(\cdot,t),\tilde{\eta}^N(\cdot,t))$ in the $1$-Wasserstein distance. To perform this task, for $0\leq s<t$ we recall, from the the content of Section \ref{sec:preliminaries}, that
\[\mW_1(\tilde{\rho}^N(t),\tilde{\rho}^N(s)) = \|\tilde{X}^N(\cdot,t)-\tilde{X}^N(\cdot,s)\|_{L^1([0,1])},\]
where $\tilde{X}^N:[0,1]\times [0,+\infty)\rightarrow \R$ is the pseudo-inverse with respect to the $x$-variable of cumulative distribution $\tilde{F}^N$ defined above. A simple computation yields
\begin{align*}
    \tilde{X}^N(z,t)=X_{\tilde{\rho}^N}(z,t)= &
        \sum_{i=0}^{N-2}\left[x_i(t)+\frac{1}{d_i^1(t)}\left(z-\frac{i}{N}\right)\right]\chi_{[\frac{i}{N},\frac{i+1}{N})}(z)\\
        &+\left[x_{N-1}(t)+\frac{1}{d_{N-1}^1(t)}\left(z-\frac{N-1}{N}\right)\right]\chi_{[\frac{N-1}{N},1]}(z).
\end{align*}
Hence,
\begin{align*}
    & \|\tilde{X}^N(\cdot,t)-\tilde{X}^N(\cdot,s)\|_{L^1([0,1])} \\
    & \ \leq \sum_{i=0}^{N-1}\int_{i/N}^{(i+1)/N}\left[|x_i(t)-x_i(s)|+ N \left(|x_{i+1}(t)-x_{i+1}(s)| + |x_i(t)-x_i(s)|\right)\left(z-\frac{i}{N}\right)\right] dz.
\end{align*}
Similarly to the proof of Lemma \ref{lem:weak_measure_convergence}, Proposition \ref{prop:uniform_subdiff} implies there exists a constant $C\geq 0$ independent of $N$ such that
\begin{align*}
    &  \|\tilde{X}^N(\cdot,t)-\tilde{X}^N(\cdot,s)\|_{L^1([0,1])}  \leq \frac{C}{N} \sum_{i=0}^{N-1}|t-s| = C|t-s|.
\end{align*}
Consequently, we obtain
\begin{equation}\label{eq:estimate3}
    \|\tilde{F}^N(\cdot,t)-\tilde{F}^N(\cdot,s)\|_{L^1(\R)}\leq C|t-s|,
\end{equation}
and a similar estimate can be also deduced for $\tilde{H}^N(\cdot,t)$. Combining estimates \eqref{eq:estimate1}, \eqref{eq:estimate2}, and \eqref{eq:estimate3}, for every compact subset $K\subset \R$ we obtain that $\tilde{F}^N$ is an equi-continuous family of absolutely continuous curves with values on a bounded and compact subset of $L^1(K)$, where we are also using Dunford-Pettis Theorem. By Arzelà-Ascoli Theorem, $\tilde{\rho}^N$ is strongly compact in $L^1([0,T]\times K)$ and the same holds for $\tilde{\eta}^N$, which proves the assertion.
\end{proof}

We are now ready to prove the main result of this section.
\begin{thm}\label{thm:convergence_main_1}
Let $m\in[1,+\infty]$ and $(\rho_0,\eta_0)\in(\mptra\cap L^m(\R))^2$ with compact support. Then, the piecewise constant particle approximation $(\tilde{\rho}^N,\tilde{\eta}^N)$ converges, up to a subsequence, weakly in $L^m_{loc}([0,+\infty)\times\R)^2$ to the unique weak measure solution $(\rho,\eta)$ to system \eqref{eq:pde-sys-newtonian} according to Definition \ref{def:solutions} with initial datum $(\rho_0,\eta_0)$. The empirical measure approximation $(\rho^N,\eta^N)$ converges, up to a subsequence, towards the same limit in $C([0,+\infty)\,;\,\mathcal{P}_p(\R)^2)$.
\end{thm}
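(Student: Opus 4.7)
The plan is to verify that any limit point $(\rho,\eta)$ produced along a subsequence by the joint compactness of Lemma \ref{lem:weak_measure_convergence}, Proposition \ref{prop:convergence_weak_Lm} (or Proposition \ref{cor:dunford} if $m=1$), and Lemma \ref{lem:technical} is a weak measure solution to \eqref{eq:pde-sys-newtonian} in the sense of Definition \ref{def:solutions}. Once this is established, uniqueness from Remark \ref{rem:uniqueness} identifies the limit and upgrades the subsequential convergence to convergence of the full sequence. So fix a subsequence (not relabelled) along which $\rho^N\to\rho$, $\eta^N\to\eta$ in $C([0,T];\mathcal{P}_p(\R))$, $\tilde{\rho}^N\to\rho$, $\tilde{\eta}^N\to\eta$ weakly in $L^m_{loc}([0,+\infty)\times\R)$ (weakly-$\star$ if $m=+\infty$), and $\tilde F^N\to F_\rho$, $\tilde H^N\to F_\eta$ strongly in $L^1_{loc}$.

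The first step is to derive a discrete weak formulation. For $\varphi\in C_c^1([0,+\infty)\times\R)$, I would multiply the first ODE in \eqref{eq:ode-sys-newtonian} by $\varphi_x(x_i(t),t)/N$, sum over $i$ and integrate in time; writing $\dot x_i\,\varphi_x(x_i,t)=\tfrac{d}{dt}\varphi(x_i,t)-\varphi_t(x_i,t)$, the left-hand side becomes $-\int_0^\infty\!\int_\R \varphi_t\, d\rho^N\, dt-\int_\R \varphi(\cdot,0)\, d\rho_0^N$, while rewriting the discrete forces as double integrals against $\rho^N$ and $\eta^N$ gives exactly the right-hand side of \eqref{eq:weak_formulation} with $\rho,\eta$ replaced by $\rho^N,\eta^N$. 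These manipulations are legitimate because, by Theorem \ref{thm:collisions_same_species}, particles of the same species never collide, and by Subsection \ref{subsec:collisions}, collisions between opposite species occur only at finitely many times on any compact interval, so that $\sum_k\sign(x_i-x_k)$ and $\sum_k\sign(x_i-y_k)$ are well-defined for almost every $t$.

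The critical step is passing to the limit in the double integrals on the right-hand side, where weak measure convergence alone is insufficient due to the jump of $\sign$. I would exploit the identity $\int_\R\sign(x-y)\, d\mu(y)=2F_\mu(x)-1-\mu(\{x\})$ for any probability measure $\mu$, converting each double integral against $\rho^N$ (resp. $\eta^N$) into a single integral of the form $\int_\R\varphi_x(x,t)(2F^N(x,t)-1)\, d\rho^N(x,t)$ up to a defect of order $1/N$ coming from the atomic masses of $\rho^N,\eta^N$. A Riemann-sum comparison using the piecewise constant reconstruction \eqref{eq:piecewise-const-densities} then shows this expression differs from $\int_\R\tilde\rho^N(2\tilde F^N-1)\varphi_x\, dx$ by a vanishing term, and analogously for the cross term with $(\tilde\eta^N,\tilde H^N)$. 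At this point the weak $L^1_{loc}$ convergence of $\tilde\rho^N,\tilde\eta^N$ combined with the strong $L^1_{loc}$ convergence and uniform $L^\infty$ bound of $\tilde F^N,\tilde H^N$ from Lemma \ref{lem:technical} makes each product weakly convergent in $L^1_{loc}$, which suffices to pass to the limit when tested against the compactly supported continuous function $\varphi_x$. The hardest part of the argument is precisely this coupling of weak and strong compactness to handle the non-symmetrisable cross term, the single feature that distinguishes the two-species case from the classical one-species Newtonian analysis in \cite{BCDFP}.

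Passing to the limit on the left-hand side is straightforward from the $W_p$ convergence of $\rho^N(t),\eta^N(t)$ together with the initial data convergence in Proposition \ref{prop:sequence_initial}, and the analogous identity for the $\eta$-equation follows by symmetry. This proves that $(\rho,\eta)$ satisfies \eqref{eq:weak_formulation}; uniqueness via Remark \ref{rem:uniqueness} then yields the convergence of both $(\tilde\rho^N,\tilde\eta^N)$ and $(\rho^N,\eta^N)$ to the same unique limit, as stated in the conclusion of the theorem.
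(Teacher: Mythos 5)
Your proposal is correct and reproduces the paper's argument essentially step by step: the discrete weak formulation with a finite number of collision times removed, the identity $\int_{\R}\sign(x-y)\,d\mu(y)=2F_\mu(x)-1-\mu(\{x\})$, the $\mathcal{O}(1/N)$ comparison between $(\rho^N,F^N,H^N)$ and their piecewise-constant counterparts $(\tilde\rho^N,\tilde F^N,\tilde H^N)$, and the identification of the limit via uniqueness (Remark \ref{rem:uniqueness}).

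The one place where you diverge is the final passage to the limit. You pass to the limit directly in the single integrals $\int_0^T\!\int_\R \tilde\rho^N\,\varphi_x\,\tilde F^N\,dx\,dt$ (and the analogous one with $\tilde H^N$), pairing the weak-$L^1_{loc}$ convergence of $\tilde\rho^N$ from Proposition \ref{cor:dunford} with the strong-$L^1_{loc}$ convergence and uniform $L^\infty$-bound of $\tilde F^N,\tilde H^N$ from Lemma \ref{lem:technical}. The paper instead rewrites these single integrals back as double integrals $\int_0^T\!\iint \sign(x-y)\,\varphi_x\,\tilde\rho^N(y)\,\tilde\rho^N(x)$ and $\int_0^T\!\iint \sign(x-y)\,\varphi_x\,\tilde\eta^N(y)\,\tilde\rho^N(x)$, and then appeals to weak $L^1$-compactness of the tensor products $\tilde\rho^N\otimes\tilde\rho^N$, $\tilde\rho^N\otimes\tilde\eta^N$ inherited from the equi-integrability of Proposition \ref{cor:dunford}. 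Both close the proof, and the two are really the same argument seen from two sides; your route has the advantage of using the strong compactness in Lemma \ref{lem:technical} exactly as stated (the paper proves that lemma but then only borrows its notation at this point), and of making the cancellation of the diagonal $\{x=y\}$ contribution transparent once the limit densities are known to be in $L^1$.
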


\begin{proof}
Our goal is to show that the limit pair $(\rho,\eta)$ satisfies \eqref{eq:weak_formulation}. We shall prove the statement for the first equation in \eqref{eq:weak_formulation}, the second one being done in the same vein. We start by proving that the approximating measure $(\rho^N,\eta^N)$ almost satisfies the first equation in \eqref{eq:weak_formulation}, up to removing the diagonal $x=y$ to avoid the discontinuity of the $\sign$-function. Let $T\geq 0$ be a fixed time and let $\varphi\in C_c^1([0,T)\times \R)$. We have:
\begin{align}
\label{eq:weak_formulation_particles}
    \begin{split}
    & \int_0^T \int_\R \varphi_t(x,t) d\rho^N(t)(x) dt +\int_\R \varphi(x,0)d\rho_0^N(x)\\
    & + \int_0^T\iint_{\R\times\R\setminus\{x=y\}} \varphi_x(x,t)\sign(x-y)d\rho^N(t)(y)d\rho^N(t)(x)dt\\
    &  -  \int_0^T\iint_{\R\times\R\setminus\{x=y\}} \varphi_x(x,t)\sign(x-y)d\eta^N(t)(y)d\rho^N(t)(x)dt\\
    & \ = \frac{1}{N}\sum_{i=1}^N\int_0^T\varphi_t(x_i(t),t) dt + \frac{1}{N}\sum_{i=1}^N\varphi(\bar{x}_i,0)\\
    &\quad  \ + \frac{1}{N^2}\int_0^T \sum_{i=1}^N \sum\limits_{\substack{j=1 \\ x_i\neq x_j }}^N \sign(x_i(t)-x_j(t)) \varphi_x(x_i(t),t)dt\\
    &  \quad \ - \frac{1}{N^2}\int_0^T \sum_{i=1}^N \sum\limits_{\substack{j=1 \\ x_i\neq y_j }}^N \sign(x_i(t)-y_j(t)) \varphi_x(x_i(t),t)dt.
    \end{split}
\end{align}
Applying the chain rule in the first term and the assumption on the support of $\varphi$ imply
\begin{align*}
    &  \frac{1}{N}\sum_{i=1}^N\int_0^T\varphi_t(x_i(t),t) dt + \frac{1}{N}\sum_{i=1}^N\varphi(\bar{x}_i,0)= -\frac{1}{N}\sum_{i=1}^N\int_0^T \dot{x}_i(t)\varphi_x(x_i(t),t)dt.
\end{align*}
We remind the reader that particles of the same species never collide and we observe that, as a consequence of Subsection \ref{subsec:collisions}, only a finite number of collisions between particles of the two species occurs in the time interval $[0,T)$. Hence, since the particles $x_i$ satisfy \eqref{eq:ode-sys-newtonian} away from the collision times, the right hand side of \eqref{eq:weak_formulation_particles} equals zero. In order to conclude the proof, we need to show that the left-hand side of \eqref{eq:weak_formulation_particles} tends to 
\begin{align*}
     & \int_0^T \int_\R \varphi_t(x,t) \rho(x,t) dx dt +\int_\R \varphi(x,0)\rho_0(x) dx\\
    & \ + \int_0^T\iint_{\R\times\R} \varphi_x(x,t)\sign(x-y)\rho(y,t)\rho(x,t)dy dx dt\\
    & \ -  \int_0^T\iint_{\R\times\R} \varphi_x(x,t)\sign(x-y)\eta(y,t)\rho(x,t)dy dx dt,
\end{align*}
as $N\rightarrow +\infty$. The proof would be completed in this case as  $\rho(\cdot,t)$ and $\eta(\cdot,t)$ being in $L^1(\R)$ at each time will make sure the diagonal terms in the above integrals do not bring any contribution. 

First, the weak measure convergence of $\rho^N$ to $\rho$ and of $\rho^N_0$ to $\rho_0$ easily implies
\begin{align*}
     & \int_0^T \int_\R \varphi_t(x,t) d\rho^N(t)(x) dt +\int_\R \varphi(x,0)d\rho_0^N(x)\\
     & \ \rightarrow \int_0^T \int_\R \varphi_t(x,t) \rho(x,t) dx dt +\int_\R \varphi(x,0)\rho_0(x)dx.
\end{align*}
Hence, in order to conclude we only need to prove that in the $N\rightarrow +\infty$ limit we have
\begin{align*}
    & \frac{1}{N^2}\sum_{i=1}^N \sum\limits_{\substack{j=1 \\ x_i\neq x_j }}^N \int_0^T\sign(x_i(t)-x_j(t)) \varphi_x(x_i(t),t)dt- \frac{1}{N^2}\sum_{i=1}^N \sum\limits_{\substack{j=1 \\ x_i\neq y_j }}^N \int_0^T\sign(x_i(t)-y_j(t)) \varphi_x(x_i(t),t)dt \\
    & \longrightarrow \int_0^T\iint_{\R\times\R}\varphi_x(x,t)\rho(x,t)\rho(y,t)\sign(x-y)dx dy dt \\
    & \qquad \qquad - \int_0^T\iint_{\R\times\R} \varphi_x(x,t)\sign(x-y)\eta(y,t)\rho(x,t)dy dx dt.
\end{align*}
The following holds:
\begin{align}
    \label{eq:RHS_weak_app}
    \begin{split}
    \phantom{=}&\frac{1}{N^2}\sum_{i=1}^N \sum\limits_{\substack{j=1 \\ x_i\neq x_j }}^N \int_0^T\sign(x_i(t)-x_j(t)) \varphi_x(x_i(t),t)dt - \frac{1}{N^2}\sum_{i=1}^N \sum\limits_{\substack{j=1 \\ x_i\neq y_j }}^N \int_{0}^{T}\sign(x_i(t)-y_j(t)) \varphi_x(x_i(t),t)dt \\
    &=\frac1N\sum_{i=1}^N \int_0^T \varphi_x(x_i(t), t) \left\{\frac1N \sum\limits_{\substack{j=1 \\ x_i\neq x_j }}^N \sign(x_i(t)-x_j(t)) -\frac1N \sum\limits_{\substack{j=1 \\ x_i\neq y_j }}^N \sign(x_i(t)-y_j(t)) \right\}dt.
    \end{split}
\end{align}
Let us focus on the terms in the parentheses. We have
\begin{align*}
    \frac1N &\sum\limits_{\substack{j=1 \\ x_i\neq x_j} }^N \sign(x_i(t)-x_j(t)) - \frac1N \sum\limits_{\substack{j=1 \\ x_i\neq y_j }}^N\sign(x_i(t)-y_j(t))\\
    &= \rho^N((-\infty, x_i(t))) - \rho^N((x_i(t), \infty)) - \eta^N((-\infty, x_i(t)))  + \eta^N((x_i(t), \infty)).
\end{align*}
It is now an easy consequence of the definition of the cumulative distribution functions,
\[
    F^N(x,t)=\rho^N((-\infty,x])\quad \mbox{and}\quad H^N(x,t)=\eta^N((-\infty,x]),
\]
that 
\begin{align}
    \label{eq:parentsterms}
    \begin{split}
     \frac1N &\sum\limits_{\substack{j=1 \\ x_i\neq x_j} }^N \sign(x_i(t)-x_j(t)) -\frac1N \sum\limits_{\substack{j=1 \\ x_i\neq y_j }}^N \sign(x_i(t)-y_j(t))\\
    &= \rho^N((-\infty, x_i(t))) - \rho^N((x_i(t), \infty)) - \eta^N((-\infty, x_i(t)))  + \eta^N((x_i(t), \infty))\\
     &= 2 F^N(x_i(t)) - 1 - \rho^N(\{x_i(t)\}) - (2 H^N(x_i(t)) - 1 - \eta^N(\{x_i(t)\}))\\
     &= 2(F^N(x_i(t)) - H^N(x_i(t))) - \rho^N(\{x_i(t)\}) + \eta^N(\{x_i(t)\})\\
     &= 2(F^N(x_i(t)) - H^N(x_i(t))) - 1/N + \eta^N(\{x_i(t)\}).
     \end{split}
\end{align}
Substituting Eq. \eqref{eq:parentsterms} into Eq. \eqref{eq:RHS_weak_app}, we obtain
\begin{align*}
    &\frac{1}{N^2}\sum_{i=1}^N \sum\limits_{\substack{j=1 \\ x_i\neq x_j }}^N \int_0^T\sign(x_i(t)-x_j(t)) \varphi_x(x_i(t),t)dt - \frac{1}{N^2}\sum_{i=1}^N \sum\limits_{\substack{j=1 \\ x_i\neq y_j }}^N \int_{0}^{T}\sign(x_i(t)-y_j(t)) \varphi_x(x_i(t),t)dt \\
    & \ = \frac{2}{N}\sum_{i=1}^N \int_{0}^{T} \varphi_x(x_i(t),t) \left[F^N(x_i(t),t)- H^N(x_i(t),t)\right] dt\\
    &\qquad +\frac{1}{N}\sum_{i=1}^N\int_{0}^{T} \varphi_x(x_i(t),t) \left( \eta^N(\{x_i(t)\})- \frac1N \right)\,dt\\
    & \ = \frac{2}{N}\sum_{i=1}^N \int_{0}^{T} \varphi_x(x_i(t),t) \left[F^N(x_i(t),t)- H^N(x_i(t),t)\right] dt + \mathcal{O}(1/N),
\end{align*}
since $\eta^N\left(\{x_i(t)\}\right)$ can only be either $0$ or $1/N$, and in the former case it holds
\begin{align*}
\left|-\frac{1}{N^2}\sum_{i=1}^N\int_{0}^{T} \varphi_x(x_i(t),t)\,dt\right|\le\frac{T}{N}\|\varphi\|_{L^\infty}.
\end{align*}
Now, denoting $\tilde{F}^N$ and $\tilde{H}^N$ as in Lemma \ref{lem:technical}, we get
\begin{align*}
    & \frac{2}{N}\sum_{i=1}^N \int_{0}^{T} \varphi_x(x_i(t),t) F^N(x_i(t),t) dt \\
    & \ = \frac{2}{N}\sum_{i=1}^N \int_{0}^{T} \varphi_x(x_i(t),t) \left(F^N(x_i(t),t)-\tilde{F}^N(x_i(t),t)\right) dt + \frac{2}{N}\sum_{i=1}^N \int_{0}^{T} \varphi_x(x_i(t),t) \tilde{F}^N(x_i(t),t) dt\\
    & \ =\frac{2}{N}\sum_{i=1}^N \int_{0}^{T} \varphi_x(x_i(t),t) \tilde{F}^N(x_i(t),t) dt,
\end{align*}
and
\begin{align*}
    & \frac{2}{N}\sum_{i=1}^N \int_{0}^{T} \varphi_x(x_i(t),t) H^N(x_i(t),t) dt \\
    & \ = \frac{2}{N}\sum_{i=1}^N \int_{0}^{T} \varphi_x(x_i(t),t) \left(H^N(x_i(t),t)-\tilde{H}^N(x_i(t),t)\right) dt + \frac{2}{N}\sum_{i=1}^N \int_{0}^{T} \varphi_x(x_i(t),t) \tilde{H}^N(x_i(t),t) dt.
\end{align*}
Since
\begin{align*}
    & \left|\frac{2}{N}\sum_{i=1}^N \int_{0}^{T} \varphi_x(x_i(t),t) \left(H^N(x_i(t),t)-\tilde{H}^N(x_i(t),t)\right) dt\right|\\
    & \ \leq \|\varphi_x\|_{L^\infty}\frac{2}{N}\sum_{i=1}^N \int_{0}^{T} \left|(H^N(x_i(t),t)-\tilde{H}^N(x_i(t),t)\right| dt\leq \frac{C(T)}{N}\|\varphi_x\|_{L^\infty} ,
\end{align*}
we easily obtain
\begin{align*}
    & \frac{1}{N^2}\sum_{i=1}^N \sum_{j=1}^N \int_0^T\sign(x_i(t)-x_j(t)) \varphi_x(x_i(t),t)dt - \frac{1}{N^2}\sum_{i=1}^N \sum\limits_{\substack{j=1 \\ x_i\neq y_j }}^N \int_0^T\sign(x_i(t)-y_j(t)) \varphi_x(x_i(t),t)dt \\
    & \ = \frac{2}{N}\sum_{i=1}^N \int_{0}^{T} \varphi_x(x_i(t),t) \tilde{F}^N(x_i(t),t) dt -\frac{2}{N}\sum_{i=1}^N \int_{0}^{T} \varphi_x(x_i(t),t) \tilde{H}^N(x_i(t),t) dt + O(1/N),
\end{align*}
as $N\rightarrow +\infty$. We now compute
\begin{align*}
    & \frac{2}{N}\sum_{i=1}^N \int_{0}^{T} \varphi_x(x_i(t),t) \tilde{F}^N(x_i(t),t) dt-\frac{2}{N}\sum_{i=1}^N \int_{0}^{T} \varphi_x(x_i(t),t) \tilde{H}^N(x_i(t),t) dt \\
    & \ = 2\sum_{i=1}^N \int_{0}^{T} \int_{x_{i-1}(t)}^{x_{i}(t)}\frac{1}{N(x_i(t)-x_{i-1}(t))}\varphi_x(x_i(t),t) \tilde{F}^N(x_i(t),t) \,dx\, dt\\ 
    & \quad -2\sum_{i=1}^N \int_{0}^{T} \int_{x_{i-1}(t)}^{x_{i}(t)}\frac{1}{N(x_i(t)-x_{i-1}(t))}\varphi_x(x_i(t),t) \tilde{H}^N(x_i(t),t) \,dx\, dt\\
    & \ = 2\int_{0}^{T}\int_\R \tilde{\rho}^N(x,t)\varphi_x(x,t)\tilde{F}^N(x,t) \,dx\, dt -2\int_{0}^{T}\int_\R \tilde{\rho}^N(x,t)\varphi_x(x,t)\tilde{H}^N(x,t) \,dx\, dt+ R(N,T),
\end{align*}
with
\begin{align*}
    & |R(N,T)|\leq  2\sum_{i=1}^N\int_{0}^{T}\int_{x_{i-1}(t)}^{x_i(t)}\frac{1}{N(x_i(t)-x_{i-1}(t))}\left|\varphi_x(x_i(t),t)-\varphi_x(x,t)\right|\tilde{F}^N(x_i(t),t)dx dt\\
    & \ + 2\sum_{i=1}^N\int_{0}^{T}\int_{x_{i-1}(t)}^{x_i(t)}\frac{1}{N(x_i(t)-x_{i-1}(t))}|\varphi_x(x,t)|\left|\tilde{F}^N(x_i(t),t)-\tilde{F}^N(x,t)\right| dx dt\\  
    & \ + 2\sum_{i=1}^N\int_{0}^{T}\int_{x_{i-1}(t)}^{x_i(t)}\frac{1}{N(x_i(t)-x_{i-1}(t))}\left|\varphi_x(x_i(t),t)-\varphi_x(x,t)\right|\tilde{H}^N(x_i(t),t)dx dt\\
    & \ +2\sum_{i=1}^N\int_{0}^{T}\int_{x_{i-1}(t)}^{x_i(t)}\frac{1}{N(x_i(t)-x_{i-1}(t))}|\varphi_x(x,t)|\left|\tilde{H}^N(x_i(t),t)-\tilde{H}^N(x,t)\right| dx dt\\
    & \ \leq \frac{2}{N}\|\varphi_{xx}\|_{L^\infty}\sum_{i=1}^N\int_0^T (x_i(t)-x_{i-1}(t)) dt + \frac{4T}{N}\|\varphi_x\|_{L^\infty}\leq \frac{C}{N},
\end{align*}
for some constant $C\geq 0$ depending on the support of the initial datum $\rho_0$, on $T$, and on the test function $\varphi$. Combining the above estimates we obtain
\begin{equation}\label{eq:pass_to_the_limit}
\begin{split}
    & \frac{1}{N^2}\sum_{i=1}^N \sum_{j=1}^N \int_0^T\sign(x_i(t)-x_j(t)) \varphi_x(x_i(t),t)dt - \frac{1}{N^2}\sum_{i=1}^N \sum\limits_{\substack{j=1 \\ x_i\neq y_j }}^N \int_0^T\sign(x_i(t)-y_j(t)) \varphi_x(x_i(t),t)dt \\
    & \ = 2\int_{0}^{T}\int_\R \tilde{\rho}^N(x,t)\varphi_x(x,t)\tilde{F}^N(x,t) \,dx\, dt -2\int_{0}^{T}\int_\R \tilde{\rho}^N(x,t)\varphi_x(x,t)\tilde{H}^N(x,t) \,dx\, dt + O(1/N)\\
    & \ =\int_0^T \iint_{\R\times\R}\sign(x-y)\varphi_x(x,t)\tilde{\rho}^N(y,t)\tilde{\rho}^N(x,
    t)\, dy\, dx\, dt \\
    & \quad -\int_0^T \iint_{\R\times\R}\sign(x-y)\varphi_x(x,t)\tilde{\eta}^N(y,t)\tilde{\rho}^N(x,
    t)\, dy\, dx\, dt+ O(1/N)
    \end{split}
\end{equation}
as $N\rightarrow +\infty$. Now, since $\tilde{\rho}^N$ and $\tilde{\eta}^N$ are weakly compact in $L^1_{loc}([0,T]\times \R)$ according to Proposition \ref{cor:dunford}, then so are the product measure $\tilde{\rho}^N(\cdot,t)\otimes \tilde{\rho}^N(\cdot,t)$ and $\tilde{\rho}^N(\cdot,t)\otimes \tilde{\eta}^N(\cdot,t)$ on $[0,T]\times\R\times\R$. Hence, we can pass to the limit in the last term of \eqref{eq:pass_to_the_limit} and obtain the desired assertion. It is straightforward to extend the result to an $L_{loc}^m$-setting since we can readily apply Proposition \ref{prop:convergence_weak_Lm}  to infer weak $L^m$-compactness.
\end{proof}

\begin{rem}\label{rem:garroni}
As pointed out in the introduction, our results allows to establish a rigorous link between a discrete model such as \eqref{eq:ode-sys-newtonian} and the continuum system of PDEs \eqref{eq:pde-sys-newtonian}. A similar result is proven in \cite{garroni20} for a general interaction kernel, possibly with logarithmic repulsive singularity, by regularising the interaction potential $V$ in the discrete setting by $V_{\delta_n}$ having second derivative bounded in $L^\infty$ by $\lambda_n:=\|D
^2V_{\delta_n}\|_{L^\infty}$. However, the result requires, see Theorem 3.3 and Remark 3.4 of \cite{garroni20}, for a general initial condition in $L^1\log L^1$, that
\[e^{3T\lambda_{\delta_N}} N^{-1}\rightarrow 0\]
an $N\rightarrow +\infty$. By smoothing our interaction potential $V(x)=-|x|$ on an interval $[-\delta_N,\delta_N]$ we obtain the necessary condition that $\delta_N$ must be tending to zero slower than $\frac{3T}{\log N}$ as $N\rightarrow +\infty$. For a simple initial condition $\rho_0(x)=\mathbf{1}_{[0,1]}$ this implies that a considerable portion of interactions are artificially \say{damped} in the discrete model. Indeed, since any two consecutive particles have a distance  of order $1/N$ in the case of the above initial condition, the regularisation by $V_{\delta_n}$ impacts on the interaction of each particle with a number of particles of order $\frac{N}{\log N}$. Our approach on the other hand allows, in the one-dimensional case and with $V(x)=-|x|$, to avoid any regularisation in the discrete setting.
\end{rem}

\section*{Acknowledgments}
A considerable part of this work was carried out during the visit of MDF to King Abdullah University of Science and Technology (KAUST) in Thuwal, Saudi Arabia. MDF is deeply grateful for the warm hospitality by people at KAUST,  for the excellent scientific environment, and for the support in the development of this work. AE was partially supported by the German Science Foundation (DFG) through CRC TR 154  "Mathematical Modelling, Simulation and Optimization Using the Example of Gas Networks". AE and MS gratefully acknowledge the support of the Hausdorff Research Institute for Mathematics (Bonn), through the Junior Trimester Program on \emph{Kinetic Theory}.

\bibliography{references}
\bibliographystyle{plain}

\end{document}